\newenvironment{smatrix}{\left( \begin{smallmatrix} } {\end{smallmatrix} \right) }
\newcommand{\stbt}[4]{\begin{smatrix}#1 & #2 \\ #3 & #4\end{smatrix}}
\newcommand{\dfour}[4]{\begin{smatrix}#1\\ &#2 \\ &&#3 \\ &&& #4\end{smatrix}}
\theoremstyle{plain}
\newtheorem{theorem}{Theorem}[subsection]
\newtheorem{introtheorem}{Theorem}
\newtheorem{lemma}[theorem]{Lemma}
\newtheorem{proposition}[theorem]{Proposition}
\newtheorem{corollary}[theorem]{Corollary}
\newtheorem{definition}[theorem]{Definition}
\newtheorem{notation}[theorem]{Notation}
\newtheorem{assumption}[theorem]{Assumption}
\newtheorem*{convention}{Convention}
\theoremstyle{remark}
\declaretheorem[name=Remark,sibling=theorem,qed={\lower-0.3ex\hbox{$\diamond$}}]{remark}
\declaretheorem[name=Note,sibling=theorem,qed={\lower-0.3ex\hbox{$\diamond$}}]{note}
\DeclareMathOperator{\AJ}{AJ}
\DeclareMathOperator{\Fil}{Fil}
\DeclareMathOperator{\GL}{GL}
\DeclareMathOperator{\GSp}{GSp}
\DeclareMathOperator{\Gal}{Gal}
\DeclareMathOperator{\Gr}{Gr}
\DeclareMathOperator{\Hom}{Hom}
\DeclareMathOperator{\Ind}{Ind}
\DeclareMathOperator{\Iw}{Iw}
\DeclareMathOperator{\Kl}{Kl}
\DeclareMathOperator{\LE}{\mathcal{LE}}
\DeclareMathOperator{\Mot}{mot}
\DeclareMathOperator{\Sieg}{Si}
\DeclareMathOperator{\Sym}{Sym}
\DeclareMathOperator{\alg}{alg}
\newcommand{\crit}{\mathrm{crit}}
\DeclareMathOperator{\diag}{diag}
\DeclareMathOperator{\mot}{mot}
\DeclareMathOperator{\pr}{pr}
\DeclareMathOperator{\vol}{vol}
\renewcommand{\AA}{\mathbf{A}}
\newcommand{\Af}{\AA_{\mathrm{f}}}
\newcommand{\CC}{\mathbf{C}}
\newcommand{\Eis}{\mathrm{Eis}}
\newcommand{\Pif}{\Pi_{\mathrm{f}}}
\newcommand{\QQbar}{\overline{\QQ}}
\newcommand{\QQ}{\mathbf{Q}}
\newcommand{\Ql}{\QQ_\ell}
\newcommand{\Qp}{\QQ_p}
\newcommand{\ZZ}{\mathbf{Z}}
\newcommand{\Zp}{\ZZ_p}
\newcommand{\bfj}{\mathbf{j}}
\newcommand{\cB}{\mathcal{B}}
\newcommand{\cD}{\mathcal{D}}
\newcommand{\cE}{\mathcal{E}}
\newcommand{\cH}{\mathcal{H}}
\newcommand{\cL}{\mathcal{L}}
\newcommand{\cM}{\mathcal{M}}
\newcommand{\cO}{\mathcal{O}}
\newcommand{\cQ}{\mathcal{Q}}
\newcommand{\cS}{\mathcal{S}}
\newcommand{\cV}{\mathcal{V}}
\newcommand{\cW}{\mathcal{W}}
\newcommand{\can}{\mathrm{can}}
\newcommand{\ch}{\mathrm{ch}}
\newcommand{\dep}{\mathrm{dep}}
\newcommand{\dR}{\mathrm{dR}}
\newcommand{\sph}{\mathrm{sph}}
\newcommand{\et}{\text{\textup{\'et}}}
\newcommand{\id}{\mathrm{id}}
\newcommand{\into}{\hookrightarrow}
\newcommand{\uPhi}{\underline{\Phi}}
\newcommand{\uchi}{\underline{\chi}}
\newcommand{\wZ}{\widetilde{Z}}
\newcommand{\Dcris}{\mathbf{D}_{\mathrm{cris}}}
\newcommand{\DdR}{\mathbf{D}_{\dR}}
\numberwithin{equation}{subsection}
\renewcommand{\le}{\leq}
\renewcommand{\leq}{\leqslant}
\renewcommand{\ge}{\geq}
\renewcommand{\geq}{\geqslant}
\author{David Loeffler}
\address[Loeffler]{UniDistance Suisse, Schinerstrasse 18, 3900 Brig, Switzerland}
\email{david.loeffler@unidistance.ch}
\urladdr{\href{http://orcid.org/0000-0001-9069-1877}{0000-0001-9069-1877}}
\author{Sarah Livia Zerbes}
\address[Zerbes]{Department of Mathematics, ETH Z\"urich, R\"amistrasse 101, 8092 Z\"urich, Switzerland}
\email{sarah.zerbes@math.ethz.ch}
\urladdr{\href{http://orcid.org/0000-0001-8650-9622}{0000-0001-8650-9622}}
\thanks{D.L. gratefully acknowledges the support of the European Research Council through the Horizon 2020 Excellent Science programme (Consolidator Grant ``ShimBSD: Shimura varieties and the BSD conjecture'', grant ID 101001051)}
\title{A universal Euler system for $\GSp_4$}
\begin{document}
\renewcommand{\crefrangeconjunction}{--} % needs to not be in preamble ?!

% \frontmatter

\begin{abstract}
 In our earlier work with Christopher Skinner \cite{LSZ17} we constructed Euler systems for the 4-dimensional spin Galois representations corresponding to automorphic forms for $\GSp_4$. This construction depended on various arbitrary choices of local test data. In this paper, we use multiplicity-one results for smooth representations to determine how these Euler system classes depend on the choice of test data, showing that all of these classes lie in a 1-dimensional space and are explicit multiples (given by local zeta-integrals) of a ``universal'' class independent of the choice of test data.
\end{abstract}

\maketitle

%\tableofcontents

\section{Introduction}

 \subsection{Overview}
 
  This paper is intended as a complement to our recent work \cite{LSZ17} with Christopher Skinner, in which we constructed an Euler system for four-dimensional Galois representations arising from cuspidal automorphic representations of $\GSp_4$. The construction of this Euler system depends on certain choices of auxiliary local data (``test data''). In \emph{op.cit.} we simply worked with an arbitrary fixed choice of these data, but it is far from obvious \emph{a priori} what the best choice should be.

  In this note, we show that different choices of these data only affect the Euler system by a scaling factor, and that these scaling factors are explicitly given by local zeta integrals.

  \subsubsection*{Note} 
  
   The results in this paper are among the inputs in the paper \cite{LZ26}, which proves a reciprocity law relating the $\GSp_4$ Euler system to critical $L$-values, and applies this to deduce cases of the Bloch--Kato and Iwasawa main conjectures. The material in this paper was originally part of the earlier drafts of \cite{LZ26}, but since it uses rather different methods from the proof of the reciprocity law, and the combined paper had become excessively long, we have factored out these representation-theoretic arguments into the present separate paper. We hope these methods will also be of interest for other Euler system constructions beyond the scope of $\GSp_4$.

 \subsection{Main results}

  We briefly recall the relevant constructions from \cite{LSZ17}. Let $r_1 \ge r_2 \ge 0$ be integers, and choose $q, r$ with $0 \le q \le r_2$ and $0 \le r \le r_1 - r_2$. In this introduction we suppose $q < r_2$, which simplifies the statements slightly. In Definition 8.3.1 of \cite{LSZ17}, we defined a map
  \[ \mathcal{LE}^{[q,r]} : \cS(\Af^4; \QQ) \otimes \cH(G(\Af); \QQ) \to H^4_{\mot}\left(Y_G, \cD_{\QQ}(-q)\right), \]
  where $Y_G$ is the Shimura variety for $\GSp_4$, and $\cD$ is a relative motive over $Y_G$ corresponding to an algebraic representation of $G$ of highest weight $(r_1, r_2)$. Here $\cS(\Af^4; \QQ)$ is the space of $\QQ$-valued Schwartz functions on $\Af^4$, and $\cH(G(\Af); \QQ)$ the Hecke algebra.

  We can obtain Galois cohomology classes by composing $\mathcal{LE}^{[q,r]}$ with the $p$-adic \'etale realisation map, for a choice of prime $p$. Let $\Pi$ be a cuspidal, non-CAP automorphic representation of $\GSp_4 / \QQ$ whose Archimedean component is cohomological with respect to the algebraic representation of highest weight $(r_1, r_2)$. By projecting to the $\Pif^\vee$-isotypical component of \'etale cohomology, we obtain classes in the Galois cohomology of $V_{\Pi}^*(-q)$, where $V_{\Pi}$ is the 4-dimensional $p$-adic Galois representation associated to $\Pi$. This step depends on a choice of homomorphism of Galois representations $H^3_{\et}(Y_{G, \QQbar}, \cD_{\Qp})[\Pif^\vee] \to V_{\Pi}^*$ (a ``modular parametrisation'').

  Our first main result shows that after restricting to a character eigenspace, this entire construction factors through a one-dimensional quotient. Let us choose a pair of Dirichlet characters $\uchi = (\chi_1, \chi_2)$ whose product is the central character $\chi_\Pi$ of $\Pi$, and let $\cS(\Af^4; \uchi^{-1})$ denote the $\uchi^{-1}$-eigenspace for the action of $\widehat{\ZZ}^\times \times \widehat{\ZZ}^\times$ on $\cS(\Af^4; \QQ)$.

  \begin{introtheorem}[{\cref{thm:equivariance}}]\label{introtheorem:A}
   Assume $\Pi_\ell$ is generic for all finite primes $\ell$. Then there is a class
   \[ z_{\can}^{[\Pi, q, r]}(\uchi) \in H^1\left(\QQ, V_{\Pi}^*(-q)\right)\]
   with the following property: for all choices of $\uPhi \in \cS(\Af^4; \uchi^{-1})$, all $\xi \in \cH(G(\Af);\QQ)$, and all choices of modular parametrisation of $V_{\Pi}^*$, the image of $\LE^{[q,r]}(\uPhi \otimes \xi)$ in $H^1(\QQ, V_{\Pi}^*(-q))$ is a scalar multiple of $z_{\can}^{[\Pi, q, r]}(\uchi)$. Moreover, the scalar factor is given by an explicit product of local zeta-integrals.
  \end{introtheorem}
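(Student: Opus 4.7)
The plan is to recognise $\LE^{[q,r]}$, after projection to the $\Pif^\vee$-isotypic component and application of the modular parametrisation, as coming from an $H(\Af)$-equivariant map of smooth representations, and then to bound the relevant space of intertwiners via a local multiplicity-one theorem.

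Recall from \cite{LSZ17} that $\LE^{[q,r]}$ is built by pushing forward an Eisenstein class on the Shimura variety attached to the subgroup $H = \GL_2 \times_{\GL_1} \GL_2 \subset G$, where $\uPhi$ parametrises the Siegel section defining the Eisenstein class and $\xi$ records the choice of level at $G$. The Hecke variable $\xi$ enters only through the ambient $G(\Af)$-action on cohomology, so after projection to $\Pif^\vee$ and modular parametrisation it acts via its image $\Pif^\vee(\xi) \in \mathrm{End}(\Pif^\vee)$. Consequently, the assignment $(\uPhi,\xi) \mapsto$ Galois cohomology class factors through an $H(\Af)$-equivariant map
\[
  \Theta_\Pi \colon \mathcal{S}(\Af^4;\uchi^{-1}) \otimes \Pif^\vee \longrightarrow H^1\bigl(\QQ, V_\Pi^*(-q)\bigr),
\]
where $H(\Af)$ acts trivially on the target. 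The theorem then reduces to the multiplicity bound
\[
  \dim_{\QQ} \Hom_{H(\Af)}\bigl(\mathcal{S}(\Af^4;\uchi^{-1}) \otimes \Pif^\vee,\; \QQ\bigr) \leq 1.
\]

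By the usual restricted tensor-product factorisations of irreducible smooth representations and of Schwartz spaces over $\Af^4$, this splits into local statements at each finite prime $\ell$:
\[
  \dim \Hom_{H(\QQ_\ell)}\bigl(\mathcal{S}(\QQ_\ell^4;\uchi_\ell^{-1}) \otimes \Pi_\ell^\vee,\; \CC\bigr) \leq 1.
\]
To prove this, I would apply a partial Fourier transform in two of the coordinates of $\QQ_\ell^4$ so as to identify $\mathcal{S}(\QQ_\ell^4;\uchi_\ell^{-1})$ with a normalised principal series of $\GL_2(\QQ_\ell)\times\GL_2(\QQ_\ell)$ restricted to $H(\QQ_\ell)$. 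Pairings with $\Pi_\ell^\vee$ then become local Novodvorsky-type zeta integrals computing the spin $L$-function of $\Pi_\ell \otimes \uchi_\ell$, whose one-dimensionality under the genericity hypothesis is classical, following from the uniqueness of Whittaker models for $\GSp_4(\QQ_\ell)$.

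Given one-dimensionality, the explicit scalar is essentially forced. Choose a reference pair $(\uPhi_0,\xi_0)$ for which the product of local zeta integrals is nonzero, and define $z_{\can}^{[\Pi,q,r]}(\uchi)$ to be the image of $\LE^{[q,r]}(\uPhi_0 \otimes \xi_0)$ divided by that reference product. By one-dimensionality the image for any other $(\uPhi,\xi)$ is a scalar multiple of $z_{\can}$; since the local Hom-spaces are spanned by the local zeta integrals, that scalar must be the ratio of the zeta integrals for $(\uPhi,\xi)$ to those for the reference, which yields the asserted explicit formula. The main obstacle is the local multiplicity-one step: identifying the $H(\QQ_\ell)$-module structure on $\mathcal{S}(\QQ_\ell^4;\uchi_\ell^{-1})$ via Fourier transform and matching the resulting Hom-space with a uniqueness statement available in the literature. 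The remaining arguments — equivariance, tensor decomposition, and bookkeeping for the scalar — are essentially formal.
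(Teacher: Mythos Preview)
Your overall architecture is correct and matches the paper: reduce to an $H(\Af)$-equivariant bilinear form on $\cW(\Pif) \otimes \cS(\Af^4;\uchi^{-1})$, factorise into local problems, and use local multiplicity one to force everything into a line spanned by the zeta integral. But the key local step is not what you think it is, and your justification for it is wrong.

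You claim that the local bound $\dim \Hom_{H(\QQ_\ell)}(\cS(\QQ_\ell^4;\uchi_\ell^{-1}) \otimes \Pi_\ell^\vee, \CC) \le 1$ ``follows from the uniqueness of Whittaker models for $\GSp_4(\QQ_\ell)$''. It does not. Passing through the Schwartz space (via the Godement--Siegel map, not a Fourier transform) lands you in $\Hom_{H(F)}(\pi \otimes \sigma, \CC)$ where $\sigma$ is a principal series of $\GL_2 \times \GL_2$ restricted to $H$. This is a Gross--Prasad type period for $(\operatorname{GSpin}_5, \operatorname{GSpin}_4)$, not a Whittaker functional, and its one-dimensionality is a separate and much harder theorem. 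In the paper this is \cref{thm:appendix2}, whose proof draws on Emory--Takeda's multiplicity-one theorem for GSpin groups, Prasad's packet results, and M\oe glin--Waldspurger's work on the Gross--Prasad conjecture for special orthogonal groups.

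There is a second, more serious, difficulty that you do not see at all: the principal series $\sigma$ can be \emph{reducible} (this happens precisely at the integer values of $(s_1,s_2)$ relevant for the Euler system), and then neither Emory--Takeda nor M\oe glin--Waldspurger applies directly. The paper handles this by a delicate case-by-case analysis, showing via R\"osner--Weissauer's classification of subregular poles that either the restriction of the zeta integral to the generic subrepresentation $\sigma_0 \subset \sigma$ is still nonzero, or one can fall back on M\oe glin--Waldspurger when the central character is a square. This is the genuine content of the local multiplicity-one theorem here, and it is neither classical nor a consequence of Whittaker uniqueness.
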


  This is an analogue of the result of \cite{harrisscholl01} in the setting of the Beilinson--Kato Euler system for $\GL_2$; compare also \cite[\S 6.2]{loeffler-ggp} for an analogous, but less complete, result for Hilbert modular surfaces. The proof of \cref*{introtheorem:A} relies on multiplicity-one results for smooth representations of $\GSp_4$ over local fields due to M\oe glin--Waldspurger and R\"osner--Weissauer.

  Our second result shows that the genericity assumption is essential:

  \begin{introtheorem}[{\cref{thm:nongeneric}}]\label{introtheorem:B}
   Assume that the central character of $\Pi$ is a square in the group of Dirichlet characters. Then, if there is a finite prime $\ell$ such that $\Pi_\ell$ is non-generic, the projection of $\LE^{[q,r]}(\uPhi \otimes \xi)$ to the $\Pif^\vee$-isotypic component is zero for all choices of $\uPhi$ and $\xi$.
  \end{introtheorem}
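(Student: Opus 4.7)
The strategy is to adapt the local-to-global factorisation used in the proof of \cref{introtheorem:A}, and to reduce to a purely local vanishing statement at the non-generic prime $\ell$. Since $\Pif^\vee$ carries central character $\chi_\Pi^{-1}$, equivariance under the centre of $G(\Af)$ forces the projection of $\LE^{[q,r]}(\uPhi\otimes\xi)$ to the $\Pif^\vee$-isotypic component to annihilate any $\widehat{\ZZ}^\times \times \widehat{\ZZ}^\times$-eigencomponent of $\uPhi$ whose two character components do not multiply to $\chi_\Pi$. Under the hypothesis that $\chi_\Pi$ is a square, the remaining eigencomponents are precisely the pieces in $\mathcal{S}(\Af^4;\uchi^{-1})$ for pairs $\uchi = (\chi_1,\chi_2)$ of Dirichlet characters with $\chi_1\chi_2 = \chi_\Pi$, so by linearity it suffices to prove the vanishing on each such eigenspace.

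For each such $\uchi$, the argument in the proof of \cref{introtheorem:A} writes the projection to $V_\Pi^*(-q)$ as a tensor product over finite primes $v$ of local functionals valued in a Hom-space
\[ H_v(\uchi) := \Hom_{G(\QQ_v)}\bigl( \mathcal{S}(\QQ_v^4;\uchi_v^{-1}) \otimes \cH(G(\QQ_v);\QQ),\ \Pi_v^\vee\bigr). \]
For generic $\Pi_v$ each $H_v(\uchi)$ is shown in the proof of \cref{introtheorem:A} to be one-dimensional and spanned by an explicit local zeta integral; for \cref{introtheorem:B} we only need that $H_\ell(\uchi) = 0$ whenever $\Pi_\ell$ is non-generic.

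This local vanishing is the heart of the proof. I would reduce it to the non-existence of a Bessel-type functional on $\Pi_\ell$ of the sort produced by the Schwartz-function model: concretely, any element of $H_\ell(\uchi)$ would give rise to a $\uchi_\ell$-equivariant linear functional on $\Pi_\ell$ factoring through a split Bessel model of the relevant type, and the classification of R\"osner--Weissauer (combined with M\oe glin--Waldspurger) shows that non-generic smooth irreducible representations of $\GSp_4(\QQ_\ell)$ admit no such model. The main obstacle, as in the proof of \cref{introtheorem:A}, is to match the somewhat combinatorial Schwartz-function-and-Hecke-algebra input appearing in the $\LE^{[q,r]}$ construction with the representation-theoretic period whose vanishing is available in the literature; the central-character normalisations and the Tate twist by $(-q)$ must be tracked carefully throughout.
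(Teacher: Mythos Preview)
Your overall plan---decompose $\uPhi$ into $\widehat{\ZZ}^\times\times\widehat{\ZZ}^\times$-eigencomponents, then kill each surviving piece by a local vanishing statement at the non-generic prime $\ell$---is exactly the paper's. But the execution has two real gaps.

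First, the local Hom-space $H_v(\uchi)$ you write down is not well-formed: $\cS(\QQ_v^4)$ carries only an $H(\QQ_v)$-action, so a $G(\QQ_v)$-equivariant map out of $\cS\otimes\cH(G)$ does not make sense as stated. In the paper one first uses Frobenius reciprocity to trade the Hecke-algebra factor for a vector in (a model of) $\Pi_\ell$, obtaining an $H(\QQ_\ell)$-equivariant bilinear form on $\Pi_\ell\otimes\cS(\QQ_\ell^4)$; by \eqref{eq:equivariance2} this factors through the principal-series quotient $\sigma_\ell$ of the Schwartz space, and the space that must be shown to vanish is $\Hom_{H(\QQ_\ell)}(\Pi_\ell\otimes\sigma_\ell,\CC)$.

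Second, and more seriously, your proposed mechanism for this local vanishing does not work. The passage from an arbitrary element of that $\Hom$-space to a split Bessel functional is precisely the hard step: the fact that the \emph{zeta integral} factors through a Bessel model (via Novodvorsky's integral on the Whittaker model) tells you nothing about $H$-periods of a representation with no Whittaker model. And R\"osner--Weissauer's results concern poles of $L$-factors for \emph{generic} representations, so they give no information about non-generic $\Pi_\ell$. The paper's argument is \cref{cor:nongeneric}: the hypothesis that $\chi_\Pi$ is a square is used exactly here, to twist to trivial central character and descend to the pair $\operatorname{SO}(5)\supset\operatorname{SO}(4)$, after which M{\oe}glin--Waldspurger's analysis in \cite{moeglinwaldspurger12} shows directly that $\dim\Hom_{H(\QQ_\ell)}(\Pi_\ell\otimes\sigma_\ell,\CC)$ equals the dimension of the space of Whittaker functionals on $\Pi_\ell$, hence is zero when $\Pi_\ell$ is non-generic. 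In particular, the square hypothesis is not used in the eigencomponent reduction (that step is valid for arbitrary $\chi_\Pi$), contrary to what your first paragraph suggests.
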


  (We expect Theorem B to hold without the assumption on the central character; but this would require generalising some results in the Gan--Gross--Prasad theory of branching laws for smooth representations from special orthogonal groups to $\operatorname{GSpin}$ groups, and this theory has not yet been fully developed.)

  For the goal of studying Galois representations, the vanishing of the Euler system for non-generic $\Pi$ is no loss: for any $\Pi$ satisfying our hypotheses, there will be a unique globally generic representation $\Pi_{\mathrm{gen}}$ in the same $L$-packet as $\Pi$, and $\Pi_{\mathrm{gen}}$ will have the same Galois representation as $\Pi$, so we may construct an Euler system for this Galois representation using $\Pi_{\mathrm{gen}}$ instead. We prefer to view \cref{introtheorem:B} as showing that there is no redundant ``extra choice'' in constructing the Euler system given by varying the representation in its $L$-packet, just as \cref{introtheorem:A} shows that there is no redundancy obtained by varying the test data.

  \begin{remark}
   In the case of Yoshida lifts, $V_{\Pi}^*$ is a direct sum of two 2-dimensional representations, with one summand accounting for the largest and smallest Hodge--Tate weights, and the other summand the two intermediate ones. In this setting, combining \cref{introtheorem:B} and Arthur's multiplicity formula implies that the construction of \cite{LSZ17} can only give nontrivial classes in the summand corresponding to the middle two Hodge--Tate weights, never in the other summand. This is consistent with the Beilinson--Bloch--Kato conjecture: any cohomology class arising from geometry must lie in the Bloch--Kato $H^1_{\mathrm{g}}$ subspace, and the Beilinson--Bloch--Kato conjecture predicts that the summand of $V_{\Pi}^*(-q)$ with the outermost Hodge--Tate weights has $H^1_{\mathrm{g}} = 0$.
  \end{remark}

  We also give a precise \emph{formulation}, and make some preliminary steps towards the proof, of a third and much more difficult theorem:

  \begin{introtheorem}[proved in \cite{LZ26}; see \cref{thm:mainthm} for precise statement]
   Assume $\Pi$ is generic, $\Pi$ and the $\chi_i$ are unramified at $p$, and $\Pi$ is Klingen-ordinary at $p$. Then, for suitable vectors $\eta_{\dR} \in \Fil^1 \DdR(\Qp, V_{\Pi})$, we have
   \[ \left\langle \eta_{\dR}, \log z^{[\Pi, q, r]}_{\can}(\uchi) \right\rangle =
   (\star) \cdot \cL_{p, \nu}(\Pi,\uchi;  -1-r_2 + q, r). \]
   where $(\star)$ is an explicit factor, $\log$ denotes the Bloch--Kato logarithm, and $\cL_{p, \nu}(\Pi,\uchi; -, -)$ is the 2-variable $p$-adic $L$-function constructed in Proposition 10.4 of \cite{LPSZ1}.
  \end{introtheorem}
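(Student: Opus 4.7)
The plan is to exploit \cref{introtheorem:A} to reduce to optimal test data and then evaluate the resulting regulator pairing using the higher Hida theory developed in \cite{LPSZ1}. By construction, $z^{[\Pi,q,r]}_\can(\uchi)$ is (up to an explicit scalar given by local zeta integrals) the image under the $p$-adic \'etale realisation of a pushforward of a Beilinson--Flach-type Eisenstein class from the ``$H$-subvariety'' $Y_H \subset Y_G$ attached to the embedding $H = (\GL_2 \times \GL_2)/\GL_1 \hookrightarrow \GSp_4$. I would first select a test datum $\uPhi \otimes \xi$ adapted to the Klingen parahoric at $p$---so that the local factor at $p$ becomes an explicit Euler factor times a power of $p$---while arranging the away-from-$p$ factors to be new-vector zeta integrals for $\Pi \otimes \uchi$.

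Using the compatibility of the Bloch--Kato logarithm with the syntomic regulator (available since $V_\Pi^*(-q)|_{G_{\Qp}}$ is crystalline), the pairing $\langle \eta_\dR, \log z^{[\Pi,q,r]}_\can(\uchi) \rangle$ becomes a cup product on $Y_H$ between a de Rham avatar of the Eisenstein class and the pullback $\iota^*(\eta_\dR)$ of the test vector. Under Klingen-ordinarity, one expects $\eta_\dR$ to be representable by a $p$-adic nearly-overconvergent Siegel modular form on $Y_G$ whose restriction to $Y_H$ decomposes as a product of nearly-overconvergent elliptic modular forms, while the Eisenstein class pulls back to an explicit nearly-overconvergent Eisenstein series via the formulas of \cite{LSZ17}. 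The cup product thus reduces to a Petersson-type integral on $Y_H$ which, by the interpolation formula of \cite[Prop.~10.4]{LPSZ1}, should equal precisely $\mathcal{L}_{p,\nu}(\Pi,\uchi;-1-r_2+q,r)$ times the archimedean, Klingen-stabilisation and Euler-type factors making up $(\star)$.

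The main obstacle is the last identification: the vectors $\eta_\dR \in \Fil^1 \DdR$ appearing in the formula correspond in general to non-holomorphic de Rham cohomology classes, so one must use higher Coleman theory \`a la Boxer--Pilloni to interpret $\eta_\dR$ as a section of an overconvergent coherent sheaf on $Y_G$, and then show that pullback to $Y_H$ commutes with the Klingen-ordinary projection in the way needed to match \cite{LPSZ1}. Carefully tracking normalisations---particularly those relating the automorphic zeta integrals, the unit-root splitting of Dieudonn\'e modules at $p$, and the Hodge-filtration conventions on $\DdR$---is where most of the technical work will lie, and this is exactly the computation that \cref{introtheorem:A} streamlines by collapsing the dependence on $\uPhi$ and $\xi$ into a single scalar.
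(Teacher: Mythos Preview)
First, note that the paper itself does not prove this theorem: it is explicitly flagged ``to be proved elsewhere'', with the full argument deferred to the companion preprint \cite{LZ20}. The present paper's contribution is only to formulate the statement precisely (\cref{thm:mainthm}) and, via \cref{thm:equivariance} together with the local zeta-integral computations of \cref{sect:localzetas}, to reduce it to the concrete identity \eqref{eq:goal2} of \cref{prop:weprove}: an equality between a de Rham pairing on the Klingen-level variety $Y_{G,\Kl}$ and a coherent pairing on the multiplicative locus $\mathfrak{X}_{G,\Kl}^m$ of its formal model. The test data used for this reduction are the ``Klingen test data'' $(u_{\Kl}\cdot w^{\Kl\prime}_{\alpha\beta},\, \Phi_{\crit}\times\Phi_{\crit})$ at $p$, whose normalised zeta integral is computed explicitly in \cref{sect:whittakerintegral} and shown to be nonzero; that nonvanishing is what makes the reduction go through.

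Your outline has the right overall architecture for the \emph{full} proof, but several of the geometric details are misplaced. The Eisenstein input is not a Beilinson--Flach element but the pushforward $\iota_{\Delta,\star}$ of a pair of $\GL_2$ motivic Eisenstein symbols along the \emph{twisted} embedding $\iota_\Delta: Y_{H,\Delta}\to Y_{G,\Kl}$ (twisted by $u_{\Kl}$); the regulator pairing is computed on $Y_{G,\Kl}$, not by pulling $\eta_{\dR}$ back to $Y_H$. Correspondingly, the $p$-adic $L$-value is identified not with a Petersson integral on $Y_H$ but with the specialisation of a cup product on $\mathfrak{X}_{G,\Kl}^m$ (\cref{prop:klingendata}), which is exactly how $\cL_{p,\nu}$ was constructed in \cite{LPSZ1}. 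Most importantly, $\eta_{\dR}$ is not representable as a nearly-overconvergent Siegel modular form in the na\"ive sense: its coherent avatar lives in degree-$2$ coherent cohomology of the toroidal compactification, not in $H^0$, so there is no ``restriction to $Y_H$ decomposing as a product of elliptic forms''. This is precisely why one needs pushforward in higher coherent cohomology (higher Hida theory) rather than pullback of sections --- and that step, as you correctly anticipate in your final paragraph, is the content of the companion paper rather than of this one.
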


  If we assume $r_1 > r_2$ then $\cL_{p, \nu}(\Pi,\uchi; -, -)$ factors as the product of the 1-variable $p$-adic $L$-functions of $\Pi$ and $\Pi \times \chi_2^{-1}$; and we can recognise the right-hand side as the product of a critical $L$-value of $\Pi \times \chi_2^{-1}$ (depending on $r$) and a value of the $p$-adic $L$-function of $\Pi$ outside its interpolation range (depending on $q$).

  The main part of the proof of this theorem is given in the paper \cite{LZ26} (recently accepted for publication in Cambridge J.~Math.) In this paper we give only the ``smooth-representation-theoretic'' preliminaries, involving the evaluation of the local zeta integrals appearing in Theorem A for various explicit choices of test data. This is used to show that Theorem C is equivalent to a more concrete, but considerably messier, statement involving pushforwards of $\GL_2 \times \GL_2$ Eisenstein classes, which is the form of the statement which we shall actually prove in the accompanying paper.
  
  \subsubsection*{Acknowledgements} We thank the anonymous referee for their careful reading of the article and perceptive comments.

 \subsection{Notation and conventions}

  \begin{itemize}

   \item We define $G = \GSp_4 \subset \GL_4$ as the similitude group of the symplectic matrix $\begin{smatrix} &&& 1\\ 	&&1&\\&-1&&\\-1&&&\end{smatrix}$.

   \item Denote by $P_{\Sieg}$, $P_{\Kl}$ and $B$ the Siegel, Klingen and Borel parabolic subgroups of $G$, given by
   \begin{align*}
    P_{\Sieg}&=\begin{smatrix} \star&\star&\star&\star\\ \star&\star&\star&\star\\&&\star&\star\\&&\star&\star\end{smatrix},&
    P_{\Kl} &= \begin{smatrix} \star&\star&\star&\star\\ &\star&\star&\star\\
    &\star&\star&\star\\ &&&\star\end{smatrix},& B = P_{\Sieg} \cap P_{\Kl}.
   \end{align*}
  Write $M_{\Sieg}$ and $M_{\Kl}$ for the standard (block-diagonal) Levi subgroups of $P_{\Sieg}$ and $P_{\Kl}$, and $T = M_{\Sieg} \cap M_{\Kl}$ for the diagonal maximal torus. We write $N_{\Sieg}, N_{\Kl}$ and $N$ for the respective unipotent radicals.
  %
  % \item For a prime $p$ and $n \ge 1$, let $\Kl(p^n)$ denote the open compact subgroup $\{g \in G(\Zp): g \bmod {p^n} \in P_{\Kl}(\ZZ/p^n)\}$, and similarly for $\Sieg(p^n)$ (although the latter will play a relatively minor role in this paper).

   \item Let $H = \{ (h_1, h_2) \in \GL_2 \times \GL_2: \det(h_1) = \det(h_2)\}$, and let $\iota$ denote the embedding $H \into G$ given by
   \[ \left(\begin{pmatrix} a & b\\ c & d\end{pmatrix}, \begin{pmatrix} a'& b'\\ c' & d'\end{pmatrix}\right) \mapsto
   \begin{smatrix} a&&& b\\ & a' & b' & \\ & c' & d' & \\ c &&& d\end{smatrix}.\]

   \item For $M \in \ZZ_{\ge 1}$, we shall identify a Dirichlet character $\chi : (\ZZ / M)^\times \to \CC^\times$ with the  unique continuous character of $\AA^\times / \QQ^\times$ that is unramified outside $M$ and maps $\varpi_\ell$ to $\chi(\ell)$ for $\ell \nmid M$, where $\varpi_\ell$ is any uniformizer at $\ell$. Note that the restriction of this adelic $\chi$ to $\widehat{\ZZ}^\times \subset \AA^\times$ is the composite of the projection $\widehat{\ZZ}^\times \to (\ZZ / M)^\times$ with the \emph{inverse} of $\chi$.

   \item In a slight conflict with the previous notation, if $j \in \ZZ$, and $\chi$ is a Dirichlet character conductor $p^m$ for some $m$ (valued in some $p$-adic field $L$), we write ``$j + \chi$'' for the continuous character $\Zp^\times \to L$ given by $x \mapsto x^j \cdot \chi(x \bmod p^m)$.
%
%   \item For $r_1, r_2, c \in \ZZ$ such that $r_1+r_2\equiv c\pmod 2$, let $\lambda(r_1,r_2;c)$ denote the unique character of the diagonal torus $T$ of $G$ such that
%   \begin{equation}
%    \label{eq:defweight}
%    \begin{smatrix} st_1&&&\\&st_2 &&\\&&st_2^{-1} &\\  	&&& st_1^{-1}\end{smatrix}\mapsto t_1^{r_1}t_2^{r_2}s^c.
%   \end{equation}
%   If $r_1 \ge r_2 \ge 0$, this character is dominant with respect to $B$, and we write $V(r_1,r_2;c)$ for the corresponding irreducible representation\footnote{In \cite{LSZ17} we used a slightly different parametrisation of the irreducible representations by pairs of integers $a, b \ge 0$. The representation denoted $V^{ab}$ of \emph{op.cit.} is $V(a+b, a; 2a+b)$ in our present notations.} of $G$.
%
  \end{itemize}

%%%%%%%%%%%%%%%%%%%%%%%%%%%%%%%%%%%%%%%%
\section{Multiplicity-one results for local periods}
 \label{sect:zeta-appendix}
%%%%%%%%%%%%%%%%%%%%%%%%%%%%%%%%%%%%%%%%

 In this section we give a careful statement of a purely local uniqueness result in smooth representation theory (Theorem \ref{thm:appendix2}), and a semi-local variant, which will be used in the following sections to study our Euler system classes.

 \subsection{Local theory: $L$-packets}

  In this section we let $F$ be a finite extension of $\Ql$ for some prime $\ell$, and $|\cdot|$ the norm on $F$, normalised such that $|\varpi| = \frac{1}{q}$, where $\varpi$ is a uniformiser and $q$ is the order of the residue field. We let $\psi$ be a non-trivial additive character $F \to \CC^\times$, which we can regard as a character of $N(F)$ via
  \[ \begin{smatrix} 1 & x & \star & \star \\ & 1 & y & \star\\ && 1 & -x \\ &&& \phantom{-}1 \end{smatrix} \mapsto \psi(x + y).\]
  Recall that for an irreducible smooth $G(F)$-representation $\rho$, we have $\dim \Hom_{N(F)}(\rho, \psi) \le 1$, and we say $\rho$ is \emph{generic} if equality holds, in which case there is a unique subspace $\cW(\rho) \subset \Ind_{N(F)}^{G(F)}(\psi)$ isomorphic to $\pi$ (the \emph{Whittaker model} of $\rho$).

  The set of isomorphism classes of irreducible smooth $G(F)$-representations is partitioned into \emph{$L$-packets}, which can be defined as the fibres of the local Langlands correspondence of \cite{gantakeda11}. Each $L$-packet has size either 1 or 2, and is naturally parameterised by the characters of the centraliser of the Langlands parameter associated with the packet. We say an $L$-packet is \emph{generic} if it contains a generic representation; in this case it contains precisely one such representation, corresponding to the trivial character of the component group. All non-singleton $L$-packets are generic. (See \cite[Main Theorem]{gantakeda11} for these statements.)

  \begin{proposition}
   If $P$ is an $L$-packet containing a tempered (or essentially tempered) representation, then all members of $P$ are tempered (resp.~essentially tempered), and moreover $P$ is a generic packet.
  \end{proposition}
  
  \begin{proof} This is not explicitly stated in \cite{gantakeda11}, but follows from the explicit description of the correspondence given in \emph{op.cit.}, as follows.
  
  Firstly, in \S 7 of \cite{gantakeda11} it is shown that the non-singleton $L$-packets have the form $\{ \theta(\tau_1 \boxtimes \tau_2), \theta(\tau_1^D \boxtimes \tau_2^D)\}$ where $\tau_i$ are discrete-series representations of $\GL_2(F)$ (with the same central character) and $\tau_i^D$ are their Jacquet-Langlands transfers to the non-split quaternion algebra over $F$. (See \emph{op.cit.} for the meaning of the notation $\theta(-)$.) Representations of this form are always essentially tempered, and are tempered iff their central character (which is the same for both members of the packet) is unitary. So if one member of an $L$-packet $P$ is tempered or essentially tempered, then all are. Moreover, any representation which is essentially tempered but not generic arises as  $\theta(\tau_1^D \boxtimes \tau_2^D)$, by Theorem 5.6(i) of \cite{gantakeda11}, and hence lies in a 2-element $L$-packet whose other member $\theta(\tau_1 \boxtimes \tau_2)$ is generic.\end{proof}

 \subsection{Local multiplicity one for irreducible representations}

  We will need the following important theorem:

  \begin{theorem}[Prasad, Emory--Takeda]\label{thm:prasadET}
   Let $\pi$ be an irreducible $G(F)$-representation belonging to a generic $L$-packet, and $\sigma_1$, $\sigma_2$ irreducible $\GL_2(F)$-representations. Then we have $\dim \Hom_{H(F)}(\pi \otimes (\sigma_1 \boxtimes \sigma_2), \CC) \le 1$.

   Moreover, if $P$ is a generic $L$-packet for $G(F)$, and the $\sigma_i$ are generic representations, there is at most one $\pi \in P$ for which $\Hom_{H(F)}(\pi \otimes (\sigma_1 \boxtimes \sigma_2), \CC) \ne 0$.
  \end{theorem}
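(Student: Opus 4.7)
The theorem combines two statements: a multiplicity-one bound and an $L$-packet uniqueness property. Both are essentially known in the literature on Gan--Gross--Prasad (GGP) style periods for $\GSp_4$, so my plan is primarily to extract each from the appropriate reference after matching setups. The crucial preliminary step is to identify our embedding $H \hookrightarrow G$ with the standard GGP inclusion $\operatorname{GSpin}_4 \hookrightarrow \operatorname{GSpin}_5$ (modulo centres), using the exceptional isomorphisms $\GSp_4 \cong \operatorname{GSpin}_5$ and $H \cong \operatorname{GSpin}_4$. Once this identification is in place, the space $\Hom_{H(F)}(\pi \otimes (\sigma_1 \boxtimes \sigma_2), \CC)$ becomes the standard GGP Hom-space for the representation $\pi \otimes (\sigma_1 \boxtimes \sigma_2)$ of the pair $(\operatorname{GSpin}_5, \operatorname{GSpin}_4)$.

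For the first assertion ($\dim \leq 1$), the plan is to invoke the work of Emory--Takeda, which extends an earlier theorem of Prasad by removing its non-degeneracy hypotheses, so that the multiplicity-one bound holds for any $\pi$ lying in a generic $L$-packet. I would not redo the underlying geometric argument, which proceeds by analysing the double coset space $H(F) \backslash G(F) / P(F)$ for a suitable parabolic $P$ (in the spirit of the Aizenbud--Gourevitch--Rallis--Schiffmann proof for special orthogonal groups); rather I would verify that our genericity hypothesis translates into the hypothesis actually used in the source.

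For the second assertion, I would first reduce to the case in which the packet $P$ has size exactly $2$, since the singleton case is vacuous. By the explicit description of the local Langlands correspondence for $\GSp_4$ due to Gan--Takeda, any generic $L$-packet of size $>1$ is automatically (essentially) tempered, so it suffices to treat the tempered case. Here, the claim that a unique element of $P$ supports a non-zero $H$-period is precisely the ``uniqueness'' half of the GGP conjecture for $(\operatorname{GSpin}_5, \operatorname{GSpin}_4)$ applied to generic data on the $\GL_2 \times \GL_2$ side; this has been established using theta-correspondence arguments together with explicit local root-number computations, so again no new mathematical content is required beyond reduction to known results.

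The main obstacle in carrying out the plan is not a new mathematical argument but a careful bookkeeping exercise: one has to translate between the ``generic $L$-packet'' formulation of the statement, the ``generic tempered parameter'' formulation used on the GGP side, and the precise form of the period treated by Emory--Takeda. In particular, one must verify that the central-character compatibility implicit in the definition of $H$ (namely $\det h_1 = \det h_2$) matches the similitude conventions of the cited works, so that their hypotheses apply to our $\pi$, $\sigma_1$, $\sigma_2$ verbatim. This is routine but unavoidable.
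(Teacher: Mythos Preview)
There is a genuine gap in your plan for the first assertion. The Emory--Takeda multiplicity-one theorem applies to an irreducible representation $\sigma$ of $H(F) = \operatorname{GSpin}_4(F)$, not to an irreducible representation of $\GL_2(F) \times \GL_2(F)$ restricted to $H(F)$. The object $\sigma_1 \boxtimes \sigma_2$ is irreducible for $\GL_2 \times \GL_2$, but its restriction to $H$ can decompose as a direct sum of several distinct irreducibles whenever both $\sigma_i$ are generic. Applying Emory--Takeda to each summand only gives $\dim \Hom_{H(F)}(\pi \otimes (\sigma_1 \boxtimes \sigma_2), \CC) \le (\text{number of summands})$, which is not enough. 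Your ``bookkeeping'' paragraph flags the central-character compatibility, but that is the easy part; the real issue is this reducibility upon restriction, which you do not address.

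The paper's proof handles this by a case split. If one of the $\sigma_i$ is non-generic (hence one-dimensional), then $(\sigma_1 \boxtimes \sigma_2)|_{H}$ is still irreducible and Emory--Takeda applies directly. If both $\sigma_i$ are generic, the paper invokes Prasad's Theorem~5, which shows that among all pairs $(\pi, \sigma)$ with $\pi$ in the packet $P$ and $\sigma$ an irreducible constituent of $(\sigma_1 \boxtimes \sigma_2)|_H$, at most one pair has nonzero Hom-space; combined with Emory--Takeda this gives the bound $\le 1$, and simultaneously yields the second assertion about uniqueness within the packet. So Prasad's result is doing more work than the ``packet uniqueness'' role you assign it: it is also needed to control the sum over $H$-constituents in the first assertion. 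Your proposed route via general GGP for $\operatorname{GSpin}$ runs into the same issue, since those statements are likewise formulated for irreducible representations of the smaller group.
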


  \begin{proof}
   We note that for any irreducible representations $\pi$ of $G(F)$ and $\sigma$ of $H(F)$, the space $\Hom_{H(F)}(\pi \otimes \sigma, \CC)$ has dimension $\le 1$, by the main theorem of \cite{emorytakeda21}. If either of the representations $\sigma_i$ is non-generic (hence one-dimensional), then $\sigma_1 \boxtimes \sigma_2$ remains irreducible when restricted to $H$, so the first assertion follows in this case.

   If both $\sigma_i$ are generic, then $(\sigma_1 \boxtimes \sigma_2)|_{H(F)}$ may be reducible, but it is always a direct sum of finitely many distinct irreducibles. Theorem 5 of \cite{prasad96} shows that in this case there is at most one $\pi \in P$, and at most one irreducible factor $\sigma$ of $\sigma_1 \boxtimes \sigma_2$, for which the Hom-space is non-zero; and the aforementioned theorem of Emory--Takeda shows that this unique non-zero Hom-space (if it exists) has dimension 1. So the result also follows in this case.
  \end{proof}

 \subsection{Zeta integrals}
  We fix a pair of characters $(\chi_1, \chi_2)$ of $F^\times$, with $\chi_1 \chi_2$ equal to the central character $\chi_{\pi}$ of $\pi$. Let $\pi$ be a generic irreducible smooth representation of $G(F)$, and $\cW(\pi)$ its Whittaker model with respect to $\psi$.

  \begin{notation}
   The notation $\cS(F^2)$ denotes the space of $\CC$-valued Schwartz functions (locally constant functions of compact support) on $F^2$. We write $\cS_0(F^2)$ for the subspace of functions vanishing at $(0, 0)$.
  \end{notation}

  Hence there is a natural right-translation action of $\GL_2(F) \times \GL_2(F)$, and in particular of $H(F)$, on the space $\cS(F^2 \times F^2) = \cS(F^2) \otimes \cS(F^2)$, preserving the subspaces $\cS_0 \otimes \cS$, $\cS \otimes \cS_0$ and $\cS_0 \otimes \cS_0$.

  In \cite[\S 8.2]{LPSZ1}, we defined a local zeta-integral $Z(w, \uPhi, s_1, s_2)$, for $\uPhi = \Phi_1 \otimes \Phi_2 \in \cS(F^2) \otimes \cS(F^2)$ and $w \in \cW(\pi)$; it is given by the meromorphic continuation of the integral
  \[ \int_{(Z_H N_H \backslash H)(F)} w(h) f^{\Phi_1}(h_1; \chi_1, s_1) W^{\Phi_2}(h_2; \chi_2, s_2)\, \mathrm{d}h\]
  where $f^{\Phi_1}(h_1; \chi_1, s_1)$ is a Godement--Siegel section (living in a principal-series $\GL_2$ representation) and $W^{\Phi_2}(h_2; \chi_2, s_2)$ is a $\GL_2$ Whittaker function (with respect to $\psi^{-1}$). See \emph{op.cit.} for exact definitions. In Theorem 8.8 of~\emph{op.cit.} we showed (as a consequence of the computations of \cite{roesnerweissauer17, roesnerweissauer18}) that the fractional ideal of $\CC[q^{\pm s_1 \pm s_2}]$ generated by the values of the zeta-integral, as $w$ and $\uPhi$ vary, is the principal ideal generated by $L(\pi \times \chi_2, s_1 - s_2 + \tfrac{1}{2}) L(\pi, s_1 + s_2 - \tfrac{1}{2})$. Accordingly, the quotient
  \begin{equation}
   \label{eq:ztilde-def}
   \wZ(w, \uPhi, s_1, s_2) \coloneqq \lim_{(\xi_1, \xi_2) \to (s_1, s_2)} \frac{Z(w, \uPhi, \xi_1, \xi_2)}{L(\pi \times \chi_2^{-1}, \xi_1 - \xi_2 + \tfrac{1}{2}) L(\pi, \xi_1 + \xi_2 - \tfrac{1}{2})}
  \end{equation}
  is a well-defined and non-zero map $\cW(\pi) \times \cS(F^2) \times \cS(F^2) \to \CC$, for every $(s_1, s_2) \in \CC^2$. 
  
  \begin{lemma}
   For any $h' \in H(F)$ we have
   \begin{subequations}
    \begin{equation}
    \label{eq:equivariance1}
    \wZ\left(\iota(h')\cdot w, h' \cdot\uPhi, s_1, s_2\right) = \left|\det h'\right|^{-(s_1 + s_2)} \wZ(w, \uPhi, s_1, s_2);
    \end{equation}
    and if $\underline{a} = \left(\stbt{a_1}{0}{0}{a_1}, \stbt{a_2}{0}{0}{a_2}\right)$, for $a_i \in F^\times$, then
    \begin{equation}
    \label{eq:equivariance2}
     \wZ(w, \underline{a}\cdot \uPhi, s_1, s_2) = \frac{\wZ(w, \uPhi, s_1, s_2)}{|a_1|^{2s_1} \chi_1(a_1)\, |a_2|^{2s_2} \chi_2(a_2)}.
    \end{equation}
   \end{subequations}
  \end{lemma}
  
  \begin{proof}
   From the formulae defining $f^{\Phi}$ and $W^{\Phi}$ (see \cite[\S 8.1]{LPSZ1}), we have 
   \[ f^{g' \cdot\Phi}(g; \chi, s) = |g'|^{-s} f^{\Phi}(g g', \chi, s), \qquad W^{g' \cdot\Phi}(h; \chi, s) = |g'|^{-s} W^{\Phi}(g g'; \chi, s)\]
   for all $g, g' \in \GL_2(F)$ and $\Phi \in \cS(F^2)$. So if the integral defining $Z(w, \uPhi, s_1, s_2)$ is convergent (which is true for $\operatorname{Re}(s_1) \gg 0$, for any given $s_2$), we obtain the relation 
   \[ Z\left(\iota(h') \cdot w, h' \cdot \uPhi, s_1, s_2\right) = \left|\det h'\right|^{-(s_1 + s_2)} Z(w, \uPhi, s_1, s_2)\]
   by substituting $hh'$ for $h$ in the integral; and by the uniqueness of meromorphic continuation this holds for all values of $s_1$ such that either side is defined. Thus the same equivariance property also holds for $\wZ$. The proof of the second formula is similar.
  \end{proof}
  \begin{theorem}\label{thm:appendix2}
   Let $(s_1, s_2) \in \CC^2$ with both real parts $\le 0$, and suppose that $\pi$ is tempered and $\chi_1$, $\chi_2$ are unitary. Then the space of linear functionals satisfying \eqref{eq:equivariance1} and \eqref{eq:equivariance2} is one-dimensional, and $\wZ$ is a basis of this space.
  \end{theorem}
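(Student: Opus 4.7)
\medskip

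\noindent\textbf{Proof plan.} The strategy is to reduce the uniqueness assertion to the local multiplicity-one result, Theorem~\ref{thm:prasadET}, by repackaging the zeta integral as an $H(F)$-equivariant trilinear form on irreducible $G(F) \times \GL_2(F) \times \GL_2(F)$ representations.

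First, I would reinterpret the Godement--Siegel and Whittaker constructions representation-theoretically. For fixed $(s_1, s_2)$, the map $\Phi_1 \mapsto f^{\Phi_1}(-\,;\chi_1, s_1)$ is a $\GL_2(F)$-equivariant surjection from $\cS(F^2)$ onto a principal-series representation $I_1$ of $\GL_2(F)$, and the map $\Phi_2 \mapsto W^{\Phi_2}(-\,;\chi_2, s_2)$ is a $\GL_2(F)$-equivariant surjection from $\cS(F^2)$ onto the Whittaker model of another principal series $I_2$; the inducing data of $I_i$ is determined by $\chi_i$ and $s_i$. The key observation is that the scalar-equivariance condition \eqref{eq:equivariance2} is precisely the condition needed for a functional to descend through these two surjections: the kernels $\ker\bigl(\cS(F^2) \to I_i\bigr)$ are characterised representation-theoretically by the central characters on which $Z_{\GL_2}(F)$ acts, and \eqref{eq:equivariance2} forces the functional to vanish on them. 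Hence any $\Lambda$ on $\cW(\pi) \otimes \cS(F^2) \otimes \cS(F^2)$ satisfying \eqref{eq:equivariance1}--\eqref{eq:equivariance2} factors through a trilinear form $\widetilde{\Lambda}$ on $\cW(\pi) \otimes I_1 \otimes I_2$.

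Second, I would translate the remaining property \eqref{eq:equivariance1} into the statement that $\widetilde{\Lambda}$ is $H(F)$-invariant up to the character $h \mapsto |\det h|^{-(s_1 + s_2)}$. Under the hypotheses (i) $\pi$ tempered, (ii) $\chi_1, \chi_2$ unitary, (iii) $\operatorname{Re}(s_i) \le 0$, the representation $\pi$ lies in a generic $L$-packet by the remark in \S2.1, while $I_1, I_2$ are unitary principal series of $\GL_2(F)$. Twisting by the character $|\det|^{-(s_1+s_2)/2}$ on each factor to absorb it into the definitions, Theorem~\ref{thm:prasadET} then bounds the space of such $\widetilde{\Lambda}$ by one dimension. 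Since $\wZ$ is, by its definition \eqref{eq:ztilde-def} and the results of \cite{LPSZ1} cited immediately before the theorem, a \emph{nonzero} functional satisfying \eqref{eq:equivariance1}--\eqref{eq:equivariance2}, it furnishes a basis of this one-dimensional space.

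The main obstacle I expect is dealing with the reducibility of the $\GL_2(F)$-principal series $I_1, I_2$ at the boundary case $\operatorname{Re}(s_i) = 0$ (which is included in the range considered): at such points $I_i$ can split into two tempered pieces, and to apply Theorem~\ref{thm:prasadET} cleanly one must argue that the image of $\widetilde{\Lambda}$ sees only a single irreducible generic constituent in each factor (otherwise the multiplicity bound has to be analysed constituent by constituent, and one must rule out pathological cases where distinct constituents could each support nonzero $H$-invariant functionals). This should be handled using the explicit form of the Godement--Siegel and Whittaker sections, which in the tempered range always land in the distinguished generic unitary constituent; temperedness of $\pi$ is essential here because it ensures the central character compatibilities that make the matching irreducible subquotient unique.
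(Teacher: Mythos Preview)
Your overall strategy matches the paper's: factor through the principal-series quotient $\sigma = I_1 \boxtimes I_2$ and reduce to bounding $\dim \Hom_{H(F)}(\pi \otimes \sigma, \CC)$. When $\sigma$ is irreducible as a $\GL_2 \times \GL_2$-representation (which, as you correctly note, fails only on the boundary $\operatorname{Re}(s_i) = 0$), Theorem~\ref{thm:prasadET} finishes the argument exactly as you describe. (Minor point: the $I_i$ are not \emph{unitary} principal series for $\operatorname{Re}(s_i) < 0$; they are merely irreducible and generic, which is all Theorem~\ref{thm:prasadET} needs.)

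The genuine gap is in your treatment of the reducible case. Your proposed fix---that the Godement--Siegel and Whittaker sections ``land in the distinguished generic unitary constituent''---is false: the map $\cS(F^2) \twoheadrightarrow I_i$ is surjective onto the \emph{full} principal series. (Also, the constituents are not both tempered: at the reducibility point one gets a twist of Steinberg and a one-dimensional character.) More fundamentally, even a correct statement about where $\wZ$ lives would not help, because you must bound the dimension of the space of \emph{all} functionals satisfying \eqref{eq:equivariance1}--\eqref{eq:equivariance2}, not just locate $\wZ$. Writing $\sigma_0 \subset \sigma$ for the generic subrepresentation, one has the exact sequence
\[
0 \to \Hom_{H(F)}(\pi \otimes \sigma/\sigma_0, \CC) \to \Hom_{H(F)}(\pi \otimes \sigma, \CC) \to \Hom_{H(F)}(\pi \otimes \sigma_0, \CC),
\]
and Theorem~\ref{thm:prasadET} bounds only the right-hand term; the left-hand term need not vanish \emph{a priori}. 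The paper handles this by two further arguments you have not anticipated: when $\chi_\pi$ is a square, one appeals to a result of M{\oe}glin--Waldspurger for $\operatorname{SO}(5) \times \operatorname{SO}(4)$ which applies directly to the full reducible $\sigma$; when $\chi_\pi$ is not a square, one carries out a case-by-case analysis through the Sally--Tadi\'c classification of tempered generic representations, using R\"osner--Weissauer's determination of \emph{subregular poles} to show that $\Hom_{H(F)}(\pi \otimes \sigma/\sigma_0, \CC) = 0$ in every relevant case.
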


  \begin{proof}
   By \cite[Proposition 3.3(a)]{loeffler-zeta1}, any linear functional satisfying \eqref{eq:equivariance2} must factor through a map from $\cS(F^2 \times F^2)$ to the principal-series representation
   \[ \sigma \coloneqq \Ind\left( |\cdot|^{\tfrac{1}{2}-s_1} \chi_{1}^{-1}, |\cdot|^{s_1-\tfrac{1}{2}}\right) \boxtimes \Ind\left( |\cdot|^{\tfrac{1}{2} - s_2} \chi_{2}^{-1}, |\cdot|^{s_2- \tfrac{1}{2}}\right),\]
   of $\GL_2(F) \times \GL_2(F)$; and if \eqref{eq:equivariance1} also holds, then this functional must define a non-zero element of
   \[ \Hom_{H(F)}\left( \pi \otimes \sigma, \CC\right). \]
   So it suffices to show that $\Hom_{H(F)}\left( \pi \otimes \sigma, \CC\right)$ has dimension $\le 1$. The vast majority of cases can be handled by one or other of the following two results:
   \begin{itemize}
    \item If $\sigma$ is irreducible as a representation of $\GL_2(F) \times \GL_2(F)$, then this is an instance of \cref{thm:prasadET}.

    \item If $\chi_{\pi}$ is a square in the group of characters of $F^\times$, then we can reduce to the case when $\pi$, $\sigma$ have trivial central characters. In this case, $\pi$ and $\sigma$ factor through $\operatorname{SO}(5, F)$ and $\operatorname{SO}(4, F)$ respectively, and the representation $\sigma$ is of the type considered in \cite[\S 1.3]{moeglinwaldspurger12}\footnote{Note that our $\pi$ is the $\sigma$ of \emph{op.cit.}, and our $\sigma$ is their $\sigma'$.}; so the Proposition \emph{loc.cit.} shows that in this case we have $\dim \Hom_{H(F)}\left( \pi \otimes \sigma, \CC\right) \le 1$ (whether or not $\sigma$ is reducible).
   \end{itemize}

   So it suffices to suppose that $\sigma$ is reducible as an $H$-representation, and its central character is not a square. The assumption $\operatorname{Re}(s_i) \le 0$ implies that $\sigma$ has a unique generic irreducible constituent $\sigma_0$, and this appears as a subrepresentation; it is the image of $\cS_0(F^2) \otimes \cS_0(F^2)$ in $\sigma$. So we can consider the restriction map
   \[ \Hom_{H(F)}\left( \pi \otimes \sigma, \CC\right) \longrightarrow \Hom_{H(F)}\left( \pi \otimes \sigma_0, \CC\right). \]
   The space $\Hom_{H(F)}\left( \pi \otimes \sigma_0, \CC\right)$ has dimension $\le 1$, by the aforementioned result of Emory--Takeda.

   The restriction of the zeta-integral to $\cS_0(F^2) \otimes \cS_0(F^2)$ is analysed in detail in \cite{loeffler-zeta1}, building on the works of R\"osner--Weissauer cited above. These results show that $\wZ$ vanishes on $\pi \otimes \sigma_0$ if and only if $s = s_1 + s_2 - \tfrac{1}{2}$ is a \emph{subregular pole} for the two-parameter zeta-integral, in the sense of \cite[Definition 4.8]{loeffler-zeta1} -- equivalently, there exists a choice of Bessel model for which it is a subregular pole for the Bessel zeta integral in the sense of \cite{roesnerweissauer18}. A case-by-case enumeration carried out in \S 5 of \cite{roesnerweissauer18} shows that if $s_1 + s_2 - \tfrac{1}{2}$ is not a subregular pole, then in fact $\Hom_{H(F)}\left(\pi \otimes \sigma/\sigma_0, \CC\right) = 0$, so the above restriction map is bijective. So we need to show that if $\chi_{\pi}$ is not a square, then $s_1 + s_2 - \tfrac{1}{2}$ cannot be a subregular pole.

   Every generic irreducible representation of $\GSp_4(F)$ is either supercuspidal, or one of the Sally--Tadic types \{Ia, $\dots$, XIa\} (classified in \cite{sallytadic93}; see Appendix A of \cite{robertsschmidt07} for summary tables). There is nothing to prove in the supercuspidal cases or for types VII, VIIIa, IXa, since the $L$-factors for these representations have no poles at all (and a subregular pole is \emph{a fortiori} a pole). For types I and X, if $\pi$ is tempered, then all poles of its $L$-factor have imaginary part 0, while $\operatorname{Re}(s_1 + s_2 - \tfrac{1}{2}) \le -\tfrac{1}{2}$, so $s_1 + s_2 - \tfrac{1}{2}$ cannot be a pole. Types IIa, IVa, Va, VIa and XIa cannot arise, since for such representations the central character is always a square. This leaves only type IIIa; but it is shown in \cite{roesnerweissauer18} that $L$-factors for type IIIa representations never have any subregular poles.
  \end{proof}

  As a by-product of the proof we also obtain the following corollaries:

  \begin{corollary}
   If $\pi$ is a tempered irreducible principal series representation, $\chi_i$ are unitary characters, and $(s_1, s_2)$ have real parts $\le 0$, then any non-zero trilinear form on $\cW(\pi) \otimes \cS(F^2) \otimes \cS(F^2)$ satisfying \eqref{eq:equivariance1} and \eqref{eq:equivariance2} has non-zero restriction to $\cW(\pi) \otimes \cS_0(F^2) \otimes \cS_0(F^2)$.\qed
  \end{corollary}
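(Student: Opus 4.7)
The plan is to leverage \cref{thm:appendix2} directly. By that theorem, the space of trilinear forms on $\cW(\pi) \otimes \cS(F^2) \otimes \cS(F^2)$ satisfying \eqref{eq:equivariance1} and \eqref{eq:equivariance2} is one-dimensional and spanned by $\wZ$; so the corollary reduces to showing that $\wZ$ itself does not vanish identically on $\cW(\pi) \otimes \cS_0(F^2) \otimes \cS_0(F^2)$.

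To verify this non-vanishing, I would re-examine the proof of \cref{thm:appendix2}. Any such trilinear form is shown to factor through the $H(F)$-representation $\sigma$ of tensor-product-of-principal-series type, and, crucially, the image of $\cS_0(F^2) \otimes \cS_0(F^2)$ in $\sigma$ is precisely the unique generic constituent $\sigma_0 \subseteq \sigma$ (a point established in \cite{loeffler-zeta1} and used implicitly in the proof of \cref{thm:appendix2}). So the issue is the injectivity of the restriction map
\[ \Hom_{H(F)}\bigl(\pi \otimes \sigma, \CC\bigr) \longrightarrow \Hom_{H(F)}\bigl(\pi \otimes \sigma_0, \CC\bigr). \]
The proof of \cref{thm:appendix2} shows that this map is bijective provided $s \coloneqq s_1 + s_2 - \tfrac{1}{2}$ is not a subregular pole of the $\GSp_4$ zeta integral attached to $\pi$ (when $\sigma$ is irreducible, $\sigma_0 = \sigma$ and the statement is trivial).

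The remaining step is to check this hypothesis in the present setting. Since $\pi$ is a tempered irreducible principal series (type I in the Sally--Tadic classification), the local $L$-factor $L(\pi, s)$ is a product of $L$-factors of unitary characters of $F^\times$, so all its poles lie on the line $\operatorname{Re}(s) = 0$. The hypotheses $\operatorname{Re}(s_i) \le 0$ force $\operatorname{Re}(s) \le -\tfrac{1}{2}$, so $s$ is not a pole of $L(\pi, s)$, and hence \emph{a fortiori} not a subregular pole. Consequently the restriction map above is bijective; since $\wZ$ is a non-zero element of $\Hom_{H(F)}(\pi \otimes \sigma, \CC)$ by construction (cf.\ \eqref{eq:ztilde-def}), its image in $\Hom_{H(F)}(\pi \otimes \sigma_0, \CC)$ is non-zero, which is exactly the desired non-vanishing on $\cW(\pi) \otimes \cS_0(F^2) \otimes \cS_0(F^2)$.

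The main obstacle is not conceptual but rather one of bookkeeping: one must carefully identify the image of $\cS_0(F^2) \otimes \cS_0(F^2)$ under the factorisation of \cite[Proposition 3.3(a)]{loeffler-zeta1} with the generic subrepresentation $\sigma_0$, so that the vanishing condition on the quotient $\sigma/\sigma_0$ translates into a non-vanishing statement for the zeta integral restricted to $\cS_0 \otimes \cS_0$. Once this identification is in hand, the pole location argument for tempered type I representations is immediate.
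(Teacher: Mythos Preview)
Your proposal is correct and follows exactly the route the paper intends: the corollary is marked \qed\ with no separate proof because it is extracted directly from the proof of \cref{thm:appendix2}, and you have correctly identified the two ingredients needed --- that the space of equivariant forms is spanned by $\wZ$, and that $\wZ$ vanishes on $\cS_0 \otimes \cS_0$ if and only if $s_1 + s_2 - \tfrac{1}{2}$ is a subregular pole, which for tempered type~I cannot occur since $\operatorname{Re}(s_1+s_2-\tfrac12)\le -\tfrac12$ while all poles of $L(\pi,s)$ have real part~$0$. (Your formulation ``real part~$0$'' is the correct one; the phrase ``imaginary part~$0$'' in the paper's proof is a slip.)
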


  \begin{corollary}\label{cor:nongeneric}
   If $\chi_{\pi}$ is a square, the $\chi_i$ are unitary, and $\pi$ is a tempered but non-generic representation, then we have $\Hom_{H(F)}\left( \pi \otimes \sigma, \CC\right) = 0$.
  \end{corollary}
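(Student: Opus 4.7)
The plan is to read off the vanishing as a by-product of the analysis already carried out in the proof of \cref{thm:appendix2}, combining the non-vanishing of the zeta integral on the generic member of the $L$-packet with a packet-multiplicity-one statement.

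Since $\pi$ is tempered, its $L$-packet $P$ is generic (as recalled at the end of \S 2.1), and hence contains a unique generic member $\pi_{\gen}$. Applying \cref{thm:appendix2} to $\pi_{\gen}$ in place of $\pi$ (its hypotheses on $s_i$ and $\chi_i$ being inherited from the corollary) yields $\dim \Hom_{H(F)}(\pi_{\gen} \otimes \sigma, \CC) = 1$, with the zeta integral $\wZ$ as a basis. It therefore suffices to show that $\pi_{\gen}$ is the only element of $P$ whose Hom-space is non-zero.

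For this I would use the square-central-character hypothesis exactly as in the second bullet of the proof of \cref{thm:appendix2}: twisting $\pi$ and $\sigma$ to reduce to the case of trivial central characters, they descend to representations of $\operatorname{SO}(5,F)$ and $\operatorname{SO}(4,F)$. The Gan--Gross--Prasad-type packet-uniqueness for tempered generic $L$-packets of special orthogonal groups (Waldspurger and M\oe glin--Waldspurger) then gives $\sum_{\pi' \in P} \dim \Hom_{H(F)}(\pi' \otimes \sigma, \CC) \le 1$. Combined with the previous step, this forces every non-generic $\pi' \in P$ --- in particular the representation $\pi$ of the statement --- to satisfy $\Hom_{H(F)}(\pi \otimes \sigma, \CC) = 0$.

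The crux of the argument, and the reason the square-central-character hypothesis cannot be avoided with present technology, lies in this final step: the packet-uniqueness input is only available in sufficient generality for orthogonal groups, and the twisting reduction is precisely what allows us to enter that framework. Removing the hypothesis would require a $\operatorname{GSpin}_5$ analogue of the Gan--Gross--Prasad branching story, which is not yet in the literature (compare the remark after \cref*{introtheorem:B} in the introduction).
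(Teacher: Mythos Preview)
Your argument is correct, but it takes a different (and slightly more roundabout) route than the paper's own proof.

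The paper argues directly: a careful reading of the proof of the relevant proposition of M\oe glin--Waldspurger shows not merely that $\dim \Hom_{H(F)}(\pi \otimes \sigma, \CC) \le 1$, but that this dimension actually \emph{equals} the dimension of the space of Whittaker functionals on $\pi$. For non-generic $\pi$ this is zero, and the corollary follows immediately, with no need to invoke $\pi_{\gen}$ or \cref{thm:appendix2}.

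Your approach instead combines two inputs: the non-vanishing for $\pi_{\gen}$ (via \cref{thm:appendix2}) and a packet-sum bound $\sum_{\pi' \in P} \dim \Hom_{H(F)}(\pi' \otimes \sigma, \CC) \le 1$. This works, but one should be careful with the citation in your step~3. The standard Gan--Gross--Prasad packet-uniqueness is proved for tempered Vogan packets on \emph{both} sides, whereas here $\sigma$ is a fixed (possibly reducible, and for $\operatorname{Re}(s_i) < 0$ non-tempered) principal series. What you actually need is the M\oe glin--Waldspurger proposition for $\sigma$ of the form treated in their \S 1.3 --- precisely the same input the paper uses --- which does yield the packet-sum statement (indeed, it gives the stronger pointwise formula $\dim \Hom = \dim$ Whittaker). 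So in the end both proofs rest on the same M\oe glin--Waldspurger input; the paper simply extracts the conclusion in one step rather than two.
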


  \begin{proof}
   This follows from a careful study of the arguments of \cite{moeglinwaldspurger12}. In \S 1.2 of \emph{op.cit.} a multiplicity $m(\pi, \sigma^\vee)$ is defined for smooth representations $\pi$ of $\operatorname{SO}_d$ and $\sigma$ of $\operatorname{SO}_{d'}$ respectively, where $d, d'$ are integers with $d - d'$ odd. Taking $d = 5$, $d' = 4$, and identifying $G / Z_G$ with $\operatorname{SO}_5$ and $H / Z_G$ with $\operatorname{SO}_4$, we have $m(\pi, \sigma^\vee) = \dim \Hom_{H(F)}(\pi \otimes \sigma, \CC) = \dim \Hom_{H(F)}(\pi, \sigma^\vee)$, so we need to show that $m(\pi, \sigma^\vee) = 0$.
   
   The proposition in \S1.3 of \cite{moeglinwaldspurger12} gives a formula for this multiplicity, assuming that one or both of $\pi, \sigma$ is parabolically induced from a parabolic with Levi factor $\GL_{N_1} \times \dots \times \GL_{N_t} \times \operatorname{SO}_r$ for some $r \ge 0$ (subject to a restriction on the inducing data which corresponds to our condition on $\operatorname{Re}(s_i)$). In our case this holds for $\sigma$ with $r = 0$; and the result is that $m(\pi, \sigma^\vee) = m(\pi, \mathrm{triv})$ where $\mathrm{triv}$ denotes the trivial representation of the trivial group $\operatorname{SO}_0$. In this case, the definition of $m(\pi, \mathrm{triv})$ is precisely the dimension of the space of Whittaker functionals on $\pi$, so it is 1 if $\pi$ is generic and zero otherwise.
  \end{proof}

  (We expect \cref{cor:nongeneric} to hold without the assumption on the central character, but we do not know a reference where this is treated in full generality.)

 \subsection{Adelic results}

  Let $\Af$ denote the ring of finite ad\`eles of $\QQ$, and let $\Pif$ be an irreducible admissible smooth representation of $G(\Af)$, with finite-order central character $\chi$. We assume that the local factors $\Pi_\ell$ are tempered for all $\ell$; and we take $\chi_1, \chi_2$ to be arbitrary Dirichlet characters whose product is $\chi$. We let $\psi$ be a continuous additive character of $\AA$ trivial on $\QQ$, and we suppose $\Pif$ has a Whittaker model with respect to $\psi$.

  We shall apply the results of the previous sections to the $G(\Ql)$-representation $\Pi_{\ell}$, for every $\ell$, taking $(s_1, s_2) = (-\tfrac{t_1}{2}, -\tfrac{t_2}{2})$ for integers $t_i \ge 0$. Tensoring together the corresponding local maps, we obtain a bilinear form
  \[ \wZ_{\mathrm{f}} = \prod_{\ell} \wZ_\ell: \cW(\Pif) \otimes \cS(\Af^2) \times \cS(\Af^2) \to \CC, \]
  satisfying the obvious semilocal analogues of \cref{eq:equivariance1,eq:equivariance2}. From the analysis above, this must factor via an $H(\Af)$-equivariant map
  \[ \cW(\Pif) \otimes \Sigma_{\mathrm{f}} \to \CC, \]
  where $\Sigma_{\mathrm{f}}$ is an admissible (but not generally finite-length) principal-series $H(\Af)$-representation given by the product of the local principal-series representations $\sigma$ above for each $\ell$. It follows readily from Theorem \ref{thm:appendix2} that the space of such $H(\Af)$-equivariant homomorphisms is 1-dimensional, and $\wZ$ is a basis vector.

  Now let $\cS_{(0)}(\Af^2 \times \Af^2)$ denote the space of functions vanishing identically on the subspace $(0, 0) \times \Af^2$ if $t_1 = 0$, and along $\Af^2 \times (0, 0)$ if $t_2 = 0$ (and along both subspaces if $t_1 = t_2 = 0$). Let $\Sigma_{\mathrm{f}}^{(0)}$ denote the image of this space in $\Sigma_{\mathrm{f}}$. We make similar definitions locally, so that
  \[ \Sigma_{\mathrm{f}} / \Sigma_{\mathrm{f}}^{(0)} = \sideset{}{'}\bigotimes_{\ell} \Sigma_{\ell} / \Sigma_\ell^{(0)}.\]
  (Note this quotient is very often zero; in fact, if neither of the $t_i$ is 0, or if neither of the $\chi_i$ is the trivial character, then $\Sigma_{\mathrm{f}}^{(0)} = \Sigma_{\mathrm{f}}$.)

  \begin{proposition}
   The restriction map
   \[ \Hom_{H(\Af)}\left(\Pif \times \Sigma_{\mathrm{f}}, \CC\right)
   \longrightarrow \Hom_{H(\Af)}\left(\Pif \times \Sigma_{\mathrm{f}}^{(0)}, \CC\right)\]
   is a bijection. In particular the space $\Hom_{H(\Af)}\left(\Pif \times \Sigma_{\mathrm{f}}^{(0)}, \CC\right)$ is one-dimensional and spanned by the image of $\wZ_{\mathrm{f}}$.
  \end{proposition}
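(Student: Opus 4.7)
The source $\Hom_{H(\Af)}(\Pif \otimes \Sigma_{\mathrm{f}}, \CC)$ is one-dimensional, spanned by $\widetilde Z_{\mathrm{f}}$, by the discussion immediately preceding the proposition. My plan is to prove bijectivity at each prime $\ell$ separately, showing that
\[ \Hom_{H(\Ql)}\bigl(\Pi_\ell \otimes \Sigma_\ell, \CC\bigr) \longrightarrow \Hom_{H(\Ql)}\bigl(\Pi_\ell \otimes \Sigma_\ell^{(0)}, \CC\bigr) \]
is a bijection of one-dimensional spaces at every $\ell$, and then assemble by the restricted tensor product (noting that $\Sigma_\ell^{(0)} = \Sigma_\ell$ at all but finitely many $\ell$).

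The crucial local reduction is the observation that $\Sigma_\ell^{(0)}$ is always either all of $\Sigma_\ell$ or equal to the unique generic irreducible subrepresentation $\sigma_{0,\ell}$ of $\Sigma_\ell$ that appeared in the proof of \cref{thm:appendix2}. I verify this by a case analysis based on reducibility of the local principal-series factors of $\Sigma_\ell = \Ind_1 \boxtimes \Ind_2$. A direct analysis of the Godement section map $\cS(\Ql^2) \to \Ind_i$ shows that $\Ind_i$ is reducible precisely when $t_i = 0$ and $\chi_{i,\ell}$ is trivial on $\Ql^\times$; in that case $\cS_0(\Ql^2)$ maps onto the (Steinberg) generic subrepresentation, and otherwise $\cS_0(\Ql^2)$ maps onto the whole irreducible $\Ind_i$. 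Combined with the identifications $\cS_{(0)} = \cS_0 \otimes \cS$, respectively $\cS \otimes \cS_0$, respectively $\cS_0 \otimes \cS_0$, according to which of $t_1, t_2$ vanishes, a subcase enumeration shows that $\Sigma_\ell^{(0)}$ is the external tensor product of the two local images, and in every subcase this lies in $\{\sigma_{0,\ell}, \Sigma_\ell\}$.

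With this classification, the local bijection is trivial when $\Sigma_\ell^{(0)} = \Sigma_\ell$. In the remaining case $\Sigma_\ell^{(0)} = \sigma_{0,\ell}$, the bijectivity was essentially extracted during the proof of \cref{thm:appendix2}: the target has dimension at most $1$ by the Emory--Takeda uniqueness result, and the restriction of $\widetilde Z_\ell$ to $\cW(\Pi_\ell) \otimes \cS_0(\Ql^2) \otimes \cS_0(\Ql^2) \subseteq \cW(\Pi_\ell) \otimes \Sigma_\ell^{(0)}$ is non-zero by the R\"osner--Weissauer non-subregular-pole analysis carried out there (exactly what rules out vanishing on $\sigma_{0,\ell}$). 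Tensoring the local bijections yields the global bijectivity, and the image of $\widetilde Z_{\mathrm{f}}$ thus spans the one-dimensional target. The main substantive step is the local classification $\Sigma_\ell^{(0)} \in \{\sigma_{0,\ell}, \Sigma_\ell\}$, which is a short but delicate bookkeeping argument through the Godement sections; everything else reduces to facts already established.
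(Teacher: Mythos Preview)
Your approach diverges from the paper's and has two genuine gaps.

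First, the parenthetical claim that $\Sigma_\ell^{(0)} = \Sigma_\ell$ for all but finitely many $\ell$ is false in precisely the case of interest: if $t_1 = 0$ and $\chi_1$ is the trivial Dirichlet character, then $\chi_{1,\ell}$ is trivial at \emph{every} prime $\ell$, so $\Ind_{1,\ell}$ is reducible and $\Sigma_\ell^{(0)} \subsetneq \Sigma_\ell$ at every prime. Consequently $\Sigma_{\mathrm{f}}^{(0)}$ is \emph{not} the restricted tensor product $\bigotimes'_\ell \Sigma_\ell^{(0)}$; rather it is the span of the subspaces $\Sigma_\ell^{(0)} \otimes \Sigma^{\ell}$ as $\ell$ varies, which is strictly larger (and note the spherical vectors do not lie in $\Sigma_\ell^{(0)}$, so the smaller object is not even well-formed as a restricted tensor product). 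So ``tensoring the local bijections'' does not produce the global statement; you still need an extension step and a compatibility check across different primes.

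Second, local bijectivity at \emph{every} prime is not established by the proof of \cref{thm:appendix2}. The R\"osner--Weissauer subregular-pole analysis you invoke is only carried out there for representations with $\chi_\pi$ not a square; when $\chi_\pi$ is a square (types IIa, IVa, Va, VIa, XIa) the proof appeals instead to M\oe glin--Waldspurger, which yields $\dim \Hom_{H}(\Pi_\ell \otimes \Sigma_\ell, \CC) \le 1$ but says nothing about whether $\wZ_\ell$ restricts nontrivially to $\sigma_{0,\ell}$. This is why the first corollary after \cref{thm:appendix2} is stated only for tempered irreducible \emph{principal series}.

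The paper sidesteps both issues by following Harris--Scholl: it uses local bijectivity at a \emph{single} unramified prime $\ell$ (where $\Pi_\ell$ is of type~I and the pole argument applies cleanly) to extend any $\mathfrak{z} \in \Hom_{H(\Af)}(\Pif \otimes \Sigma_{\mathrm{f}}^{(0)}, \CC)$ from the subspace $\Sigma_\ell^{(0)} \otimes \bigotimes'_{q \ne \ell}\Sigma_q \subset \Sigma_{\mathrm{f}}^{(0)}$ to all of $\Sigma_{\mathrm{f}}$, and then checks that this extension agrees with $\mathfrak{z}$ on the whole of $\Sigma_{\mathrm{f}}^{(0)}$. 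Your argument could be repaired to this form, but as written it both claims more local input than has been proved and misidentifies the global assembly step.
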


  \begin{proof}
   We follow essentially the same argument as \cite[Proposition 2.2]{harrisscholl01}, which is an analogous statement for Kato's Euler system. We know that there exist many primes $\ell$ such that the local restriction map
   \[ \Hom_{H(\Af)}\left(\Pi_\ell \times \Sigma_{\ell}, \CC\right) \to \Hom_{H(\Af)}\left(\Pi_\ell \times \Sigma_{\ell}^{(0)}, \CC\right)\]
   is a bijection: it suffices to take any prime for which $L(\pi_\ell, s_1 + s_2 - \tfrac{1}{2})$ is not a subregular pole, and by the preceding proof, any unramified prime will suffice.) So, given any $H$-invariant $\mathfrak{z}: \Pif \times \Sigma_{\mathrm{f}}^{(0)} \to \CC$, we can restrict $\mathfrak{z}$ to $\Sigma_\ell^{(0)} \otimes \sideset{}{'}{\bigotimes_{q \ne \ell}} \Sigma_{q}$, and this linear functional will extend uniquely to a linear functional $\tilde{\mathfrak{z}}$ on $\Sigma_{\mathrm{f}}$. One checks that $\tilde{\mathfrak{z}}$ actually agrees with $\mathfrak{z}$ on the whole of $\Sigma_{\mathrm{f}}^{(0)}$.
%
%   We claim that $\tilde{\mathfrak{z}}$ and $\mathfrak{z}$ have the same restriction to $\Sigma_q^{(0)} \otimes \sideset{}{'}{\bigotimes_{r \ne q}} \Sigma_{r}$, for any prime $q$. For $q = \ell$ this is true by definition, and for $q \ne \ell$, we can argue that they agree on $\Sigma_q^{(0)} \otimes \Sigma_{\ell}^{(0)} \otimes \sideset{}{'}{\bigotimes_{r \ne \ell, q}} \Sigma_{r}$, and hence they must agree on the larger space by the choice of $\ell$. Since the sum of the spaces $\Sigma_q^{(0)} \otimes \sideset{}{'}{\bigotimes_{r \ne q}} \Sigma_{r}$ over all primes $q$ is $\Sigma_{\mathrm{f}}^{(0)}$, this proves the required result.
  \end{proof}
%
%  We showed in sections 7 and 8 of \cite{LSZ17} that the Lemma--Eisenstein map, restricted to Schwartz functions of character $\uchi^{-1}$, factors via $\cS_{(0)}(\Af^2 \times \Af^2; \uchi^{-1}) \mapsto \Sigma_{\mathrm{f}}^{(0)}$. So this proves \cref{thm:equivariance}.

 \subsection{Fields of definition}

  In our global applications, $\Pif$ will be the finite part of an automorphic representation whose Archimedean component is determined by a pair of integers $r_1 \ge r_2 \ge 0$; and the twist $\Pif' \coloneqq \Pif \otimes \|\cdot\|^{-(r_1 + r_2)}$ will be definable over a number field $E$. There is a canonical bijection between $\cW(\Pif)$ and $\cW(\Pif')$, so we can regard $\wZ$ as a linear functional on $\cW(\Pif') \otimes \cS(\Af^2 \times \Af^2, \CC)$. We shall take $s_i =-\tfrac{t_i}{2}$ for integers $t_i \ge 0$.

  Since $\Pif'$ is definable over $E$, the Whittaker model has a canonical $E$-structure: we have $\cW(\Pif') = \cW(\Pif')_E \otimes_E \CC$, where $\cW(\Pif')_E \subset \cW(\Pif')$ is the space of $E$-rational Whittaker functions as defined as in \cite[Definition 10.2]{LPSZ1}.

  \begin{proposition}
   If $t_1 + t_2 = r_1 + r_2 \bmod 2$, and $E$ contains the values of the $\chi_i$ and also an $N$-th root of unity where $N$ is the conductor of $\chi_2$, then $\wZ$ is the base-extension to $\CC$ of a linear functional
   \[ \cW(\Pif')_E \otimes \cS(\Af^2 \times \Af^2, E) \to E. \]
  \end{proposition}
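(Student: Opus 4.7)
The plan is Galois descent, using the one-dimensionality established in the preceding proposition. For $\sigma \in \Aut(\CC/E)$, I would construct a conjugate functional $\sigma\widetilde{Z}$ by restricting $\widetilde{Z}$ to the $E$-rational subspace $\cW(\Pif')_E \otimes \cS(\Af^2 \times \Af^2, E)$ (via the canonical identification $\cW(\Pif) \cong \cW(\Pif')$ determined by the twist), applying $\sigma$ pointwise to values, and re-extending scalars to $\CC$. The crucial check is that $\sigma\widetilde{Z}$ satisfies the same $H(\Af)$-equivariance as $\widetilde{Z}$: the scaling factor $|a_i|^{-t_i}\chi_i(a_i)^{-1}$ appearing in \eqref{eq:equivariance2} automatically lies in $E$ since $E$ contains the values of $\chi_1, \chi_2$ and $|a_i|^{-t_i}$ is rational. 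For \eqref{eq:equivariance1}, passing from $\cW(\Pif)$ to $\cW(\Pif')$ modifies the scaling factor to $|\det h|^{(t_1+t_2+r_1+r_2)/2}$, whose exponent is an integer precisely under the parity hypothesis. Consequently $\sigma\widetilde{Z}$ lies in the same one-dimensional $\Hom$-space, so $\sigma\widetilde{Z} = c_\sigma\widetilde{Z}$ for some $c_\sigma \in \CC^\times$.

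To pin down $c_\sigma = 1$, I would evaluate both sides on a specific $E$-rational pair $(w_0, \uPhi_0)$ for which $\widetilde{Z}(w_0, \uPhi_0)$ is nonzero. At the (almost all) unramified primes $\ell$, a standard spherical choice with $\uPhi_0 = \mathbf{1}_{\Zl^2 \times \Zl^2}$ and $w_0$ the normalised spherical Whittaker function reduces the local factor to $\widetilde{Z}_\ell = 1$ by the Casselman--Shalika type unramified computation. At the finitely many ramified primes, one selects $E$-rational test data so that the local integrand becomes a finite sum of products of values of $\chi_1, \chi_2$, values of $\psi$, and rational powers of the local uniformiser, and the normalisation by $L(\pi_\ell \times \chi_2^{-1}, \cdot)L(\pi_\ell, \cdot)$ — itself a polynomial in $q_\ell^{\pm s_1 \pm s_2}$ with $E$-coefficients — preserves this rationality.

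The main obstacle is the bookkeeping of how the twist $\|\cdot\|^{-(r_1+r_2)}$ interacts with the $E$-structure on the Whittaker model and with the half-integer shift $(s_1, s_2) = (-t_1/2, -t_2/2)$. One must verify that the factor $|\nu(g)|^{(r_1+r_2)/2}$ relating Whittaker functions on $\Pif$ and $\Pif'$ combines with the half-integer $s_i$ to give only integer exponents when pulled back along $\iota$, which is exactly the content of the parity condition. The hypothesis that $E$ contains an $N$-th root of unity enters in controlling the $E$-structure on the $\GL_2$-Whittaker model $W^{\Phi_2}(h_2; \chi_2, s_2)$ at places dividing $N$: its construction involves an integral against $\psi$ twisted by $\chi_2$, and the resulting values at $E$-rational $\Phi_2$ are expressible in terms of Gauss-sum-like quantities that lie in $E(\mu_N)$, so an $N$-th root of unity in $E$ is needed to ensure $E$-rationality at these places.
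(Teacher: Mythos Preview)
Your approach is valid in outline but takes an unnecessary detour compared with the paper. The paper does not invoke multiplicity one at all: instead it performs the direct ``elementary check'' (a mild generalisation of \cite[Proposition~8.11]{LPSZ1}) showing that for \emph{every} pair $(w,\uPhi)$ of $E$-rational data the value $\widetilde{Z}(w,\uPhi)$ lies in $E\QQ^{\mathrm{ab}}$ and transforms under $\sigma\in\Gal(E\QQ^{\mathrm{ab}}/E)$ by the explicit character $\chi_2(\sigma)^2$. The $\mu_N$ hypothesis then enters in one line: $\chi_2^2$ has conductor dividing $N$, so it is trivial on $\Gal(E\QQ^{\mathrm{ab}}/E)$ once $\mu_N\subset E$.

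Your route---conjugate the functional, use one-dimensionality to get $\sigma\widetilde{Z}=c_\sigma\widetilde{Z}$, then pin down $c_\sigma$ on a test vector---would also work, but notice that the step ``evaluate at a specific $(w_0,\uPhi_0)$ and show the value is $\sigma$-fixed'' is \emph{exactly} the computation the paper does, just restricted to one input rather than all inputs. Since the elementary check in \cite{LPSZ1} applies uniformly to arbitrary $E$-rational data, the multiplicity-one reduction buys nothing here. Worse, your treatment of the ramified places is vague (``Gauss-sum-like quantities that lie in $E(\mu_N)$''), whereas the paper's approach produces the precise transformation character $\chi_2^2$, which both explains exactly why $\mu_N\subset E$ is the right hypothesis and, as the paper's subsequent remark notes, shows how to remove that hypothesis by renormalising with a Gauss sum. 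Your strategy would eventually recover $c_\sigma=\chi_2(\sigma)^{-2}$, but only after carrying out the same local computation you are trying to sidestep.

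In short: nothing is wrong with your plan, but the multiplicity-one step is superfluous, and the substantive content---the rationality computation at primes dividing the conductor of $\chi_2$---is identical in both arguments.
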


  \begin{proof}
   An elementary check, using the definition of the zeta-integral and the $E$-structure on the Whittaker model, shows that if $w \in \cW(\Pif')_E$ and $\uPhi \in \cS(\Af^2 \times \Af^2, E)$, then the values of the zeta-integral lie in $E \QQ^{\mathrm{ab}}$ and transform under $\Gal(E \QQ^{\mathrm{ab}} / E)$ via
   \[ \wZ(w, \uPhi)^\sigma = \chi_2(\sigma)^{2} \wZ(w, \uPhi), \]
   where $\chi_2$ is regarded as a character of $\Gal(\QQ^{\mathrm{ab}} / \QQ)$ by composing with the mod $N$ cyclotomic character. (This is a mild generalisation of Proposition 8.11 of \cite{LPSZ1}.) Thus if $E$ contains an $N$-th root of unity, this action is trivial for all $\sigma \in \Gal(E \QQ^{\mathrm{ab}} / E)$ and hence $\wZ(w, \uPhi)$ descends to $E$.
  \end{proof}

  \begin{remark}
   We can dispense with the need to introduce $N$-th roots of unity if we renormalise $\wZ$ by a Gauss sum, but we are principally interested in the case $\chi_2 = 1$ anyway, so we shall not do this here.
  \end{remark}

%%%%%%%%%%%%%%%%%%%
\section{Euler systems for Siegel automorphic representations}\label{sect:ES}
%%%%%%%%%%%%%%%%%%

 Here we briefly recall the Galois cohomology classes constructed in \cite{LSZ17}, and use the multiplicity-one results of the previous section to understand the dependence of these classes on the auxiliary data we have chosen.

 \subsection{Automorphic representations}
  \label{sect:arthur}

  We recall some properties of automorphic representations for $\GSp_4$, following \cite[\S 10.1]{LSZ17}. We fix a Dirichlet character $\chi$.

  \subsubsection*{Endoscopic classification} The automorphic representations contributing to the discrete spectrum $L^2_{\mathrm{disc}}(G, \chi)$ have been classified by Arthur and Gee--Ta\"ibi: see \cite{arthur04} for an overview, Conjecture 2.5.6 of \cite{geetaibi18} for a precise statement for arbitrary GSpin groups, and Theorem 7.4.1 for its proof in the case of $\operatorname{GSpin}_5 = \GSp_4$. They can be partitioned into global packets, one for each ``Arthur parameter'' $\tau$, which is a formal sum of $\chi$-self-dual cuspidal automorphic representations of $\GL_1$ and $\GL_2$ satisfying certain conditions. We shall be interested in the case of \emph{semisimple} parameters $\tau$, which are given by either
  \begin{itemize}
  \item a single $\chi$-selfdual cuspidal automorphic representation of $\GL_4$ (``general type'');
  \item the formal sum $\pi \boxplus \pi'$ of two distinct cuspidal automorphic representations of $\GL_2$, both of central character $\chi$ (``Yoshida type'').
  \end{itemize}

  The corresponding global packet $\widetilde{\Pi}_{\tau}$ is a product of local packets $\widetilde{\Pi}_{\tau_v}$ of $G(\QQ_v)$-representations (and these local packets are singletons for almost all $v$). In the general-type case, every element in the global packet is automorphic, and has multiplicity 1 in the discrete spectrum. 
  
  In the Yoshida case, only ``half'' of these representations are automorphic. More precisely, for each place $v$, a choice of representation $\Pi_v \in \widetilde{\Pi}_{\tau_v}$ determines a character $\xi_{\Pi_v}$ of a certain centralizer group $\cS_\tau$ isomorphic to $\ZZ/ 2\ZZ$ (notations as in \S 5 of \cite{arthur04}); and the product $\sideset{}{'}\bigotimes_v \Pi_v$ is automorphic iff $\prod_v \xi_v = 1$ (and has multiplicity 1 in this case). This is an instance of Arthur's multiplicity formula (Theorem 7.4.1 of \cite{geetaibi18}).
  
  In either case, each local packet contains a unique generic representation (whose associated character $\xi_{\Pi_v}$ is trivial); and the tensor product of these locally-generic representations is automorphic and globally generic (a \emph{a priori} stronger condition than being everywhere locally generic). See Remark 7.4.7 of \cite{geetaibi18}.

  \subsubsection*{Cohomological representations}

  Let $r_1 \ge r_2 \ge 0$ be integers. This determines a pair of discrete-series representations $\{\Pi_\infty^H, \Pi_\infty^W\}$ of $G(\mathbb{R})$ (forming an archimedean packet), with $\Pi_\infty^H$ holomorphic and $\Pi_\infty^W$ generic, both having central character $\operatorname{sign}^{(r_1 - r_2)}$; these are the ``discrete series representations of weight $(k_1, k_2)$'' in the sense of \cite[\S 10.1]{LSZ17} for $(k_1, k_2) = (r_1 + 3, r_2 + 3)$. We shall suppose that $\chi(-1) = (-1)^{r_1 - r_2}$, so that the central characters of these representations match the restriction of $\chi$ to $\mathbb{R}^\times$.

  \begin{remark}
   The automorphic representations with Archimedean component $\Pi_\infty^H$ are those generated by holomorphic Siegel cusp forms, possibly vector-valued, taking values in the representation $\Sym^{(r_1 - r_2)} \otimes \det^{(r_2 + 3)}$ of $U_2(\mathbb{R})$.
  \end{remark}

  We fix henceforth a semisimple Arthur parameter $\tau$, of central character $\chi$, such that $\widetilde{\Pi}_{\tau_\infty}$ is the above Archimedean packet determined by $(r_1, r_2)$; and a member $\Pi$ of the global packet $\widetilde{\Pi}_{\tau}$ which is automorphic. We write $\Pi'$ for the (non-unitary) twist $\Pi \otimes \|\cdot\|^{-(r_1 + r_2)}$. Then $\Pi'$ is cohomological with coefficients in the representation\footnote{The representation here denoted $V(r_1, r_2; r_1 + r_2)$ was called $V^{a, b}$ in \cite{LSZ17}, where $(a, b) = (r_2, r_1 - r_2)$. We apologise to the reader for the shift in notations.} $V(r_1, r_2; r_1 + r_2)$ (where, as in \cite{LPSZ1}, $V(r_1, r_2; c)$ for $c = r_1 + r_2 \bmod 2$ denotes the representation of highest weight $\diag(u, v, tv^{-1}, t u^{-1}) \mapsto u^{r_1} v^{r_2} t^{(c - r_1 - r_2)/2}$, so the central character is $x \mapsto x^c$). In particular, $\Pi'$ is \emph{C-algebraic} in the sense of \cite{buzzardgee14}, so we may choose a number field $E \subset \CC$ such that $\Pif'$ is definable over $E$.

 \subsection{Shimura varieties}\label{ss:Shimvarandcoeff}

  \begin{definition}
   For $U \subset G(\Af)$ a sufficiently small level, and $K$ a field of characteristic 0, let $Y_{G}(U)_{K}$ denote the base-extension to $K$ of the canonical $\QQ$-model of the level $U$ Shimura variety for $G$. We denote by $Y_{G, K}$ the pro-variety $\varprojlim_U Y_G(U)_{K}$.
  \end{definition}

  \begin{definition}
   For each algebraic representation $V$ of $G$, let $\cV$ denote the
   $G(\Af)$-equivariant relative Chow motive over $Y_{G, \QQ}$ associated to $V$ via Ancona's functor, as in \cite[\S 6.2]{LSZ17}.
  \end{definition}

  \begin{remark}
   Our conventions are such that the 4-dimensional defining representation $V(1, 0; 1)$ of $G$ corresponds to the relative motive $h^1(A)$, where $A$ is the universal abelian surface over $Y_{G, \QQ}$; and the 1-dimensional symplectic multiplier representation $V(0, 0; 2)$ maps to $\QQ(-1)[-1]$, where the square brackets $[-1]$ signify twisting the $G(\Af)$-action by the character $\|\cdot\|^{-1}$.
  \end{remark}

  Since $\cV$ is a relative Chow motive, motivic cohomology of $Y_G(U)$ with coefficients in $\cV$ is defined: we can choose $m,n$ such that $\cV$ is a direct factor of $W^{\otimes n}(m)$, where $W$ is the defining representation of $G$. Then $H^i_{\mot}(Y_G(U)_{\QQ}, \cV)$ is a direct summand of $H^{i+n}(A^n,\QQ(m))$, where $A$ is the universal abelian surface over $Y_{G}(U)_{\QQ}$. The equivariant structure implies that the cohomology has a Hecke action, so the direct limit over all levels $U$ is a $G(\Af)$-representation.

  We use the same symbol $\cV$ for the $p$-adic \'etale realisation of this motive, which is a locally constant \'etale sheaf of $\Qp$-vector spaces on $Y_G(U)_{\QQ}$, with a natural extension to the canonical integral model $Y_G(U)_{\ZZ[1/N]}$ if $U$ is unramified outside $N$.

 \subsection{Automorphic Galois representations}
  \label{ss:modularparam}

  Let $E$ be a number field over which $\Pif'$ is definable, as above, and $L$ the completion of $E$ at some prime above $p$. Taking $V = V(r_1, r_2; r_1 + r_2)$, the $\Pif'$-isotypical part of $H^3_{\et,c}(Y_G(U)_{\QQbar}, \cV) \otimes_{\Qp} L$ is isomorphic to the sum of $\dim\left( \Pif^U\right)$ copies of an $L$-linear Galois representation $V_\Pi$ (uniquely determined up to isomorphism). If $\Pi$ is of general type, then the semisimplification of $V_{\Pi}$ coincides with the 4-dimensional representation $\rho_{\Pi, p}$ associated to $\Pi$ as in \cite[\S\S 10.1-10.2]{LSZ17}. If $\Pi$ is of Yoshida type, $V_{\Pi}$ is a 2-dimensional direct summand of $\rho_{\Pi, p}$.

  \begin{remark}
   The representation $\rho_{\Pi, p}$ only depends on the Arthur parameter $\tau$ (so we should perhaps write it as $\rho_{\tau, p}$). Hence in the general-type case all members of the global packet $\widetilde{\Pi}_\tau$ have the same Galois representation (at least up to semisimplication, but it is expected that these representations are always irreducible, and this is known for $p$ large).

   However, in the Yoshida-type case, the representation $V_{\Pi}$ depends on the choice of $\Pi$ within the packet: for each possible $\Pif$, exactly one of $\Pif \otimes \Pi_\infty^H$ and $\Pif \otimes \Pi_\infty^W$ appears in the automorphic spectrum, and this determines which Galois representation we obtain as the $\Pif'$-part of \'etale cohomology. In this setting, the Arthur parameter $\tau$ is given by $\pi_f \boxplus \pi_g$ where $f$ and $g$ are holomorphic modular forms, of weights $r_1 + r_2 + 4$ and $r_1 - r_2 + 2$ respectively, and we have $\rho_{\tau, p} = \rho_{f, p} \oplus \rho_{g, p}(-1-r_2)$. Then $V_{\Pi}$ will be $\rho_{f, p}$ if $\Pi_\infty = \Pi_\infty^H$, and $\rho_{g, p}(-1-r_2)$ if $\Pi_\infty = \Pi_\infty^W$.

   In particular, if we choose $\Pi$ such that $\Pi_v$ is generic for all finite $v$, then the local characters $\xi_{\Pi_v}$ of $\ZZ / 2\ZZ$ (see \cref{sect:arthur} above) are trivial for $v \ne \infty$. Since $\Pi$ is automorphic, Arthur's multiplicity formula shows that $\xi_{\Pi_\infty}$ must be trivial as well, implying that $\Pi_\infty$ must be $\Pi_\infty^W$. So the Galois representation we obtain is $\rho_{g, p}(-1-r_2)$.
  \end{remark}

  It is convenient to fix a representative of this isomorphism class, as follows.

  \begin{definition}
   Let $\cM$ denote an arbitrary (but fixed) choice of $E$-model of $\Pif'$, and $\cM_L$ its base-extension to $L$. Then we define
   \[ V_{\Pi} \coloneqq \Hom_{L[G(\Af)]}\Big(\cM_L, H^3_{\et, c}( Y_{G, \QQbar}, \cV)_L \Big).\]
  \end{definition}

  The representation $V_{\Pi}$ is then an $L$-linear representation of $\Gal(\QQbar/\QQ)$, either 2-dimensional or 4-dimensional, which is a distinguished representative of the isomorphism class of representations above. With these definitions, we obtain a \emph{canonical} isomorphism of $G(\Af) \times \Gal(\QQbar/\QQ)$-representations
  \[
   \cM_L \otimes_L V_{\Pi} \xrightarrow{\ \cong\ } H^3_{\et,c}(Y_{G,\QQbar}, \cV)_L[\Pif'],
  \]
  where the notation $(\dots)[\Pif']$ denotes the $\Pif'$-isotypic part.

  \begin{remark}
   Later we shall suppose $\Pif$ is everywhere locally generic, and $\cM$ is the $E$-linear Whittaker model $\cW(\Pif')_E$. In this case, one can give a slightly more down-to-earth definition of $V_{\Pi}$ for generic $\Pi$ using the newvector theory of \cite{robertsschmidt07} and \cite{okazaki}. This allows us to choose a level $U(\Pi)$ such that $\cW(\Pif')^{U(\Pi)}$ is one-dimensional and has a canonical basis vector $w^0$ normalised such that $w^0(1) = 1$. Moreover, since non-generic tempered representations of $\GSp_4(\Ql)$ are not paramodular, no other representation in the global packet of $\Pi$ has a $U(\Pi)$-fixed vector. So evaluating at $w^0$ identifies $V_\Pi$ with the subspace of $H^3_{\et,c}(Y_G(U(\Pi))_{\QQbar}, \cV)_L$ on which the Hecke operators away from $\Sigma$ act via the system of eigenvalues assocated to $\Pif'$, where $\Sigma$ is any finite set of primes containing all those where $\Pi$ ramifies.
  \end{remark}

  There is a duality of $G(\Af) \times \Gal(\QQbar/\QQ)$-representations
  \[
   \big\langle\!\!\big\langle -, -\big\rangle\!\!\big\rangle_G:  \Big(H^3_{\et,c}(Y_{G,\QQbar}, \cV)\Big) \times \Big(H^3_{\et}(Y_{G,\QQbar}, \cV^\vee(3))\Big) \longrightarrow L
  \]
  given at level $U$ by $\vol(U) \cdot \langle -, -\rangle_{Y_G(U)_{\QQbar}}$, where $\langle-, - \rangle_{Y_G(U)_{\QQbar}}$ is the Poincar\'e duality pairing on the cohomology of $Y_G(U)_{\QQbar}$, and ``vol'' denotes volume with respect to the unramified Haar measure on $G(\Af)$ (so $G(\widehat{\ZZ})$ has volume 1).

  \begin{proposition}
   There is a canonical isomorphism between $\cM_L$ and the space
   \[ \Hom_{\Gal(\QQbar / \QQ)}\left(H^3_{\et}\left(Y_{G,\QQbar}, \cV^\vee(3)\right)_L[\Pif'^\vee], V_{\Pi}^*\right), \]
   i.e.~the space of ``modular parametrisations'' of the Galois representation $V_{\Pi}^*$, in the sense of \cite[\S 10.4]{LSZ17}. We write $\big\langle\!\!\big\langle m, -\big\rangle\!\!\big\rangle_G$ for the modular parametrisation corresponding to $m \in \cM_L$. \qed
  \end{proposition}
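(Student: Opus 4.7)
The plan is to construct the desired isomorphism by composing Poincar\'e duality with the tautological evaluation $\cM(\Pif')_L \otimes V_{\Pi} \to H^3_{\et,c}(Y_{G,\QQbar}, \cV)_L[\Pif']$ coming from the definition of $V_\Pi$. Concretely, for $m \in \cM(\Pif')_L$, I would define the candidate modular parametrisation $\langle\!\langle m, -\rangle\!\rangle_G$ to be the map that sends $z \in H^3_{\et}(Y_{G,\QQbar}, \cV^\vee(3))[\Pif'^\vee]$ to the element of $V_\Pi^* = \Hom_L(V_\Pi, L)$ given by
\[ \phi \mapsto \big\langle\!\!\big\langle \phi(m), z\big\rangle\!\!\big\rangle_G, \qquad \phi \in V_\Pi = \Hom_{L[G(\Af)]}\!\left(\cM_L, H^3_{\et,c}\right). \]

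Next I would verify the formal properties. Galois-equivariance in $z$ is immediate because $\big\langle\!\!\big\langle -, -\big\rangle\!\!\big\rangle_G$ is $\Gal(\QQbar/\QQ)$-equivariant and the evaluation $\phi(m)$ has no Galois dependence; $L$-linearity in $m$ is clear; and since $\phi$ is $G(\Af)$-equivariant, the pairing factors through the $\Pif'$-isotypic component of $H^3_{\et,c}$ (via $\phi(m)$) and the $\Pif'^\vee$-component of $H^3_\et(-,\cV^\vee(3))$ (via $z$), so the Poincar\'e pairing is well-defined on these subspaces.

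It then remains to see that the assignment $m \mapsto \langle\!\langle m, -\rangle\!\rangle_G$ is an isomorphism. For this I would invoke the canonical isomorphism $\cM(\Pif')_L \otimes_L V_{\Pi} \xrightarrow{\sim} H^3_{\et,c}(Y_{G,\QQbar}, \cV)_L[\Pif']$ recalled in \cref{ss:modularparam}, together with its evident analogue for the dual, namely $\cM(\Pif'^\vee)_L \otimes_L V_{\Pi}^* \xrightarrow{\sim} H^3_{\et}(Y_{G,\QQbar}, \cV^\vee(3))_L[\Pif'^\vee]$, obtained by applying the same definition to the contragredient representation. Under these identifications, Poincar\'e duality restricts to a perfect $G(\Af)\times \Gal$-equivariant pairing on the $\Pif'$-isotypic summands, and by $G(\Af)$-equivariance together with irreducibility of $\Pif'$ (multiplicity one) this pairing must be the tensor product of the canonical duality $\cM(\Pif')_L \otimes \cM(\Pif'^\vee)_L \to L$ with the evaluation $V_\Pi \otimes V_\Pi^* \to L$. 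Chasing through this identification shows that $m \mapsto \langle\!\langle m, -\rangle\!\rangle_G$ is precisely the adjoint of the duality $\cM(\Pif')_L \otimes \cM(\Pif'^\vee)_L \to L$, composed with the tautological identification, and is therefore an isomorphism.

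The only non-routine point in this argument is the compatibility of Poincar\'e duality with the $\Pif'$-isotypic decomposition at infinite level: one must check that $\big\langle\!\!\big\langle -,-\big\rangle\!\!\big\rangle_G$, defined as $\operatorname{vol}(U)$ times Poincar\'e duality at each finite level $U$, passes to the limit to yield a $G(\Af)$-equivariant pairing pairing $\Pif'$ against $\Pif'^\vee$ in the expected way. This is a standard matter of unwinding the volume normalisations, and once this compatibility is in place the rest of the proof reduces to linear algebra on isotypic components.
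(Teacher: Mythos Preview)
Your argument is correct and is precisely the formal unwinding the authors have in mind; note that the paper itself offers no proof, ending the proposition with \qed\ immediately after the statement. The one implicit point in your bijectivity step is that it requires $\operatorname{End}_{\Gal(\QQbar/\QQ)}(V_\Pi)=L$ (so that $\Hom_{\Gal}(\cM(\Pif'^\vee)_L\otimes V_\Pi^*,\,V_\Pi^*)$ has the same dimension as $\cM(\Pif')_L$), which the paper likewise takes for granted.
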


 \subsection{The Lemma--Eisenstein map}\label{ss:ESclass}

  Let $(q, r)$ be integers with $0 \le q \le r_2$ and $0 \le r \le r_1 - r_2$.

  \begin{notation}
   We let $\cS_{(0)}(\Af^2 \times \Af^2, \QQ)$ denote the space of $\QQ$-valued Schwartz functions on $\Af^2 \times \Af^2$, with $\GL_2(\Af) \times \GL_2(\Af)$ acting by right-translation, satisfying the following vanishing property: if $t_1 = 0$, then $\Phi( (0, 0) \times -)$ vanishes identically; and if $t_2 = 0$, then $\Phi(- \times (0, 0))$ vanishes identically.
  \end{notation}

  In \cite[\S 8.3]{LSZ17}, we defined a map, the \emph{Lemma--Eisenstein map}
  \[ \mathcal{LE}^{[q,r]}: \cS_{(0)}(\Af^2 \times \Af^2, \QQ) \otimes \cH(G(\Af)) \to H^4_{\mot}(Y_{G,\QQ}, \cV^\vee(3-q)) \]
  satisfying a certain equivariance property, where $\cH(-)$ denotes the Hecke algebra with $\QQ$-coefficients. (Note that this map depends on a choice of Haar measures on $G(\Af)$ and $H(\Af)$; we take the unramified Haar measures on both.) Denote by $\mathcal{LE}_{\et}^{[q, r]}$ the composite of this map with the \'etale realisation.\footnote{More precisely, since \'etale cohomology is not well-behaved over global fields (the absolute Galois group lacks good finiteness properties), we regard $\mathcal{LE}_{\et}^{[q, r]}$ as a map into the direct limit of \'etale cohomology of smooth $\ZZ[1/S]$-models of Shimura varieties $Y_{G}(U)_{\QQ}$, with both $S$ and $U$ varying, and $S$ assumed to contain $p$ and the primes of ramification of $U$. The terms in this direct limit are finite-dimensional over $\Qp$, and $\mathcal{LE}_{\et}^{[q, r]}$ is well-defined as a map into the direct limit, since any element of $\cS_{(0)}(\Af^2 \times \Af^2, \QQ) \otimes \cH(G(\Af))$ is unramified at all but finitely many primes.}

  \begin{note}
   The \'etale cohomology of $Y_{G, \QQ}$ (in the sense of the footnote) is related to Galois cohomology of \'etale cohomology over $\QQbar$ via the Hochschild--Serre spectral sequence. Since no representation in the $L$-packet of $\Pif'$ can contribute to cohomology in degrees other than 3, the generalised eigenspace in $H^4_{\et}(Y_{G,\QQ}, \cV^\vee(3-q))$ for the spherical Hecke eigensystem corresponding to $\Pif'^\vee$ will be contained in the homologically trivial classes (the kernel of the map to $H^0(\QQ, H^4_{\et}(Y_{G, \QQbar}, \dots))$), which is the  domain of the \'etale Abel--Jacobi map into $H^1(\QQ, H^3_{\et}(Y_{G, \QQbar}, \dots))$.

   Hence the natural projection map onto the $\Pif'^\vee$-eigenspace
   \[ \pr_{\Pif'^\vee} : H^1(\QQ, H^3_{\et}(Y_{G,\QQbar}, \cV^\vee(3-q))_L \to H^1\left(\QQ, H^3_{\et}(Y_{G,\QQbar}, \cV^\vee(3-q))_L[\Pif'^\vee]\right)\]
   lifts to a map
   \begin{equation}
    \label{eq:def_prPi}
    \AJ^{[\Pi, q]} : H^4_{\et}(Y_{G,\QQ}, \cV^\vee(3-q))_L \to H^1\left(\QQ, H^3_{\et}(Y_{G,\QQbar}, \cV^\vee(3-q))_L[\Pif'^\vee]\right),
   \end{equation}
   characterised as the unique Hecke-equivariant map agreeing with the $\Pif'^\vee$-projection of the \'etale Abel--Jacobi map on homologically trivial classes. We denote the composite $\AJ^{[\Pi, q]} \circ \mathcal{LE}_{\et}^{[q, r]}$ by
   \[ \mathcal{LE}_{\et}^{[\Pi, q, r]}:  \cS_{(0)}(\Af^2 \times \Af^2, L) \otimes_L \cH(G(\Af))_L \longrightarrow H^1\left(\QQ, H^3_{\et}(Y_{G,\QQbar}, \cV^\vee(3-q))_L[\Pif'^\vee]\right).\]
   (Cf.~\cite[\S 10.3]{LSZ17}.)
   \end{note}

 \subsection{Dual formulation}

  For our purposes, it is simpler to work with the bilinear form corresponding to $\LE_{\et}^{[\Pi, q, r]}$ under Frobenius reciprocity as in \cite[\S 3.9]{LSZ17}. Any $m \in \cM_L$ gives a homomorphism $H^3_{\et}(\dots)_L[\Pif'^\vee] \to V_{\Pi}^*$, and hence a map
  \[
   \big\langle\!\!\big\langle m, \mathcal{LE}^{[\Pi,q,r]}_{\et}(-) \big\rangle\!\!\big\rangle_G: \cS_{(0)}(\Af^2 \times \Af^2, L) \otimes_L \cH_L(G(\Af)) \to H^1(\QQ, V_{\Pi}^*(-q)).
  \]
    
  \begin{definition}
   Let
   \[ z^{[\Pi, q, r]} : \cM_L \times \cS_{(0)}(\Af^2 \times \Af^2, L) \to H^1(\QQ, V_{\Pi}^*(-q)) \otimes \|\det\|^{-q}\]
   be the $L$-bilinear map defined as follows: we let 
   \[ z^{[\Pi, q, r]}(m, \uPhi) = \big\langle\!\!\big\langle m, \mathcal{LE}^{[\Pi,q,r]}_{\et}(\uPhi \otimes 1_U) \big\rangle\!\!\big\rangle_G,\]
   where $1_U \in \cH(G(\Af))_L$ is the idempotent attached to any open compact subgroup $U$ of $G(\Af)$ which fixes $m$. 
  \end{definition}
  
  From the $(G \times H)(\Af)$-equivariance properties of $\mathcal{LE}^{[\Pi,q,r]}_{\et}$ (cf.~\cite[\S 8.2]{LSZ17}), it follows that $z^{[\Pi, q, r]}(m, \uPhi)$ does not depend on the choice of $U$; and it is $H(\Af)$-equivariant, for the diagonal action of $H(\Af)$ on $\cM_L \times \cS_{(0)}(\Af^2 \times \Af^2, L)$ and the trivial action on $H^1(\QQ, V_{\Pi}^*(-q))$. 
  
  %Moreover, we have
  %\[ \big\langle\!\!\big\langle m, \mathcal{LE}^{[\Pi,q,r]}_{\et}(\uPhi \otimes \xi) %\big\rangle\!\!\big\rangle_G = z^{[\Pi, q, r]}(\xi \cdot w, \uPhi) \]
  %for all $m \in \cM_L$, $\uPhi \in \cS_{(0)}(\Af^2 \times \Af^2, L)$, and $\xi \in \cH(G(\Af))_L$.
  
  Unravelling the notation, the values of $z^{[\Pi, q, r]}$ can be made explicit as follows:

  \begin{proposition}
   \label{prop:explicitZ}
   Suppose that $m \in \cM_L$. Then for any open compact $U \subset G(\Af)$ such that $U$ fixes $m$ and $V = U \cap H(\Af)$ fixes $\uPhi$, we have
   \[ z^{[\Pi, q, r]}(m, \uPhi) = \vol(V) \cdot \Big\langle m, \iota^{[t_1,t_2]}_{U, \star}\left(\Eis^{[t_1,t_2]}_{\et, \uPhi}\right)\Big\rangle_{Y_G(U)}. \]
   Here $\iota^{[t_1,t_2]}_{U, \star}$ denotes pushforward along $Y_H(V)_{\QQ} \to Y_G(U)_{\QQ}$, and $\Eis^{[t_1,t_2]}_{\et, \uPhi}$ denotes the \'etale realisation of the motivic Eisenstein class $ \Eis^{[t_1,t_2]}_{\Mot, \uPhi}$ (c.f. \cite[\S 4.1]{KLZ20}). \qed
  \end{proposition}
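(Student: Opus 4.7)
The plan is to reduce the identity to an unfolding of the definition of $\LE^{[q,r]}$ from \cite[\S 8.3]{LSZ17}, via Frobenius reciprocity. I would first choose the Hecke algebra element $\xi = e_U \coloneqq \vol(U)^{-1}\mathbf{1}_U \in \cH(G(\Af))$. Since $U$ fixes $m$ (regarded as an element of the Whittaker model $\cW(\Pif')_E = \cM(\Pif')$), one has $\xi \cdot m = m$, so by the defining property of $z^{[\Pi, q, r]}$,
\[
z^{[\Pi, q, r]}(m,\uPhi) = \big\langle\!\!\big\langle m,\ \LE_\et^{[\Pi,q,r]}(\uPhi\otimes e_U)\big\rangle\!\!\big\rangle_G.
\]

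Next I would unpack $\LE^{[q,r]}(\uPhi\otimes e_U)$ at level $U$. The construction of the Lemma--Eisenstein map in \cite{LSZ17} assembles, from a $V$-fixed function $\uPhi$ and a Hecke-algebra element at level $U$, a pushforward of the motivic Eisenstein class $\Eis^{[t_1,t_2]}_{\Mot,\uPhi}$ on $Y_H(V)$ along $\iota_U\colon Y_H(V)\to Y_G(U)$. With the idempotent normalisation $e_U$, this introduces a factor of $\vol(V)/\vol(U)$. Passing to the \'etale realisation commutes with pushforward; the projector $\AJ^{[\Pi,q]}$ reduces to the \'etale Abel--Jacobi map, since the pushforward class is homologically trivial in the degree in question; and the modular parametrisation attached to $m$ realises the $\Pif'^\vee$-component of cohomology precisely as the pairing with $m$.

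Finally, the pairing $\big\langle\!\!\big\langle -,-\big\rangle\!\!\big\rangle_G$ at level $U$ equals $\vol(U)\cdot\langle-,-\rangle_{Y_G(U)}$ by definition, so the two $\vol(U)$ factors cancel, yielding
\[
z^{[\Pi, q, r]}(m,\uPhi) = \vol(V)\cdot\big\langle m,\ \iota^{[t_1,t_2]}_{U,\star}\bigl(\Eis^{[t_1,t_2]}_{\et,\uPhi}\bigr)\big\rangle_{Y_G(U)},
\]
as claimed. The only genuinely non-trivial step is the accurate bookkeeping of volume factors in \cite[\S 8.3]{LSZ17}: matching the normalisation conventions in the definition of $\LE^{[q,r]}$ against those of the pairing $\big\langle\!\!\big\langle -,-\big\rangle\!\!\big\rangle_G$. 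Independence of the choice of $U$ then follows from compatibility of pushforward and Poincar\'e duality under passage to a deeper level, which also shows that any ambiguity in the choice of $U$ disappears after specialising to the $\Pif'^\vee$-isotypical quotient.
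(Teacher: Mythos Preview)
Your proposal is correct and matches the paper's intent: the proposition is stated with a terminal \qed and preceded by ``Unravelling the notation we have the following,'' so the paper offers no proof beyond signalling that this is a direct unfolding of the definitions of $z^{[\Pi,q,r]}$, $\LE^{[q,r]}$, and $\langle\!\langle-,-\rangle\!\rangle_G$. Your choice of $\xi = e_U$ and the volume bookkeeping is exactly the computation the authors are leaving to the reader; note only that the statement uses both $m$ and $w$ (a minor notational slip in the paper), and you have correctly identified them.
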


 \subsection{The universal Euler system class}
  \label{sect:zeta1}

  We shall now apply the local results of \S \ref{sect:zeta-appendix} to the Galois-cohomology-valued bilinear form $z^{[\Pi, q, r]}(m, \uPhi)$. We have already seen that this is $H$-equivariant if we let $H$ act by $\|\det\|^{-q}$ on the target.

  \begin{proposition}
   For $a_1, a_2 \in \QQ^\times$, we have
   \[ z^{[\Pi, q, r]}\Big(m, \stbt{a_1}{}{}{a_1} \Phi_1 \otimes \stbt{a_2}{}{}{a_2}\Phi_2\Big) = a_1^{-t_1}\cdot a_2^{-t_2}\cdot
      z^{[\Pi, q, r]}\left(m, \Phi_1 \otimes \Phi_2 \right).\]
  \end{proposition}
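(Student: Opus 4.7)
The plan is to reduce the claim to an analogous scaling property of the motivic Eisenstein classes on $Y_H$, which in turn reduces to a standard equivariance of the Beilinson Eisenstein symbol on $Y_{\GL_2}$ under rational central scalars.

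First I would invoke the explicit formula in \cref{prop:explicitZ}: up to a volume factor that is unaffected by replacing $\uPhi$ by $\underline{a}\uPhi$ (after shrinking the stabilising compact $V$ if necessary), $z^{[\Pi,q,r]}(m,\uPhi)$ equals $\big\langle m, \iota_{U,\star}^{[t_1,t_2]}(\Eis^{[t_1,t_2]}_{\et,\uPhi})\big\rangle_{Y_G(U)}$. Since the pushforward $\iota_{U,\star}^{[t_1,t_2]}$, the Poincar\'e pairing, and the modular parametrisation induced by $m$ are all linear in $\uPhi$ and independent of the scaling, it will suffice to establish the identity at the level of the Eisenstein classes on $Y_H$:
\[ \Eis^{[t_1,t_2]}_{\et,\,\underline{a}\uPhi} \;=\; a_1^{-t_1}\, a_2^{-t_2}\, \Eis^{[t_1,t_2]}_{\et,\,\uPhi} \qquad \text{for } a_1,a_2 \in \QQ^\times. \]

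Next, the class $\Eis^{[t_1, t_2]}_{\Phi_1 \otimes \Phi_2}$ is constructed as an external cup-product of single-factor Eisenstein classes on the two projections $Y_H \to Y_{\GL_2}$ (cf.\ \cite[\S 4.1]{KLZ20}), so the problem further reduces to the analogous single-factor statement: for $a \in \QQ^\times$ and $\Phi \in \cS(\Af^2)$,
\[ \Eis^{[t]}_{\et,\,\stbt{a}{}{}{a}\Phi} \;=\; a^{-t}\,\Eis^{[t]}_{\et,\,\Phi}. \]
This is a well-known equivariance of the Beilinson motivic Eisenstein symbol, and it follows from the $\GL_2(\Af)$-equivariance of $\Eis^{[t]}$ combined with two elementary observations: (i) a rational central matrix $\stbt{a}{}{}{a} \in \GL_2(\QQ)$ acts trivially on $Y_{\GL_2}$ as a Hecke correspondence, since its action is absorbed by the $\GL_2(\QQ)$-quotient in the adelic presentation of the Shimura variety; and (ii) the same element acts on the coefficient sheaf $\operatorname{TSym}^t \cH$ by the central character $a \mapsto a^{t}$.

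The main obstacle is purely a bookkeeping exercise: one must pin down the sign of the central character of $\operatorname{TSym}^t \cH$ under our normalisations, which depends on the convention for the $\GL_2$-action on Schwartz functions (right translation versus substitution), the Tate twist hidden in the Eisenstein symbol's target, and whether one writes $\operatorname{TSym}^t$ or $\Sym^t$. Once this is verified against the normalisations of \cite{LSZ17,KLZ20} -- most easily by checking it on a single explicit unramified Schwartz function -- the two reductions above combine to give the proposition.
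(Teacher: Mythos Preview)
Your proposal is correct and follows essentially the same route as the paper: both reduce the claim to the known fact that the $\GL_2$ Eisenstein symbol factors through the quotient of $\cS_{(0)}(\Af^2)$ on which $\QQ^\times$ acts by $x \mapsto x^{-t}$, which the paper simply cites from \cite[\S 7.2]{LSZ17}. Your write-up is more explicit about the intermediate reductions (via \cref{prop:explicitZ} and the factorisation into two $\GL_2$ pieces) and about why the single-factor statement holds, whereas the paper compresses all of this into a one-line reference; but there is no substantive difference in strategy.
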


  \begin{proof}
   This follows from the definition of the $\GL_2$ Eisenstein symbol, which factors through the maximal quotient of $\cS_{(0)}(\Af^2)$ on which $\QQ^\times$ acts via $x \mapsto x^{-t}$; see \S 7.2 of \cite{LSZ17} for a much more precise statement.
  \end{proof}

  Let us now choose a pair of $E$-valued Dirichlet characters $\uchi = (\chi_1, \chi_2)$ with $\chi_1\chi_2 = \chi$. Let $\cS_{(0)}(\Af^2 \times \Af^2; \uchi^{-1})$ denote the $(\chi_1^{-1}, \chi_2^{-1})$-eigenspace for the action\footnote{See remarks above on how we interpret Dirichlet characters as characters of $\hat{\ZZ}^\times$, which is the opposite of the na\"ive identification.} of $\hat{\ZZ}^\times\times \hat{\ZZ}^\times$ on $\cS_{(0)}(\Af^2 \times \Af^2, E)$, and similarly without ${(0)}$.

  \begin{corollary}
   If $(-1)^{q + r}$ is not equal to the common value $(-1)^{r_1} \chi_1(-1) = (-1)^{r_2} \chi_2(-1)$, then  $z^{[\Pi, q, r]}(m, \uPhi)$ vanishes for all $\uPhi \in \cS_{(0)}(\Af^2 \times \Af^2, \uchi^{-1})$. If these signs are equal, then for all $m \in \cM_L$, $\uPhi \in \cS_{(0)}(\Af^2 \times \Af^2, \uchi^{-1})$ and $a_1, a_2 \in \Af^\times$, we have
   \[
    z^{[\Pi, q, r]}\Big(m, \stbt{a_1}{}{}{a_1} \Phi_1 \otimes \stbt{a_2}{}{}{a_2}\Phi_2\Big) =
    \frac{z^{[\Pi, q, r]}\left(m, \Phi_1 \otimes \Phi_2 \right)}{\|a_1\|^{-t_1} \chi_1(a_1) \cdot \|a_2\|^{-t_2} \chi_2(a_2)}.
   \]
  \end{corollary}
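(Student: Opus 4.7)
The plan is to combine the preceding proposition (which handles the case $a_i \in \QQ^\times$) with the defining property of the $\uchi^{-1}$-eigenspace (which handles $a_i \in \widehat{\ZZ}^\times$), using the decomposition $\Af^\times = \QQ^\times \cdot \widehat{\ZZ}^\times$ coming from strong approximation.

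Concretely, any $a_i \in \Af^\times$ admits a decomposition $a_i = q_i u_i$ with $q_i \in \QQ^\times$ and $u_i \in \widehat{\ZZ}^\times$, unique up to $(q_i, u_i) \leftrightarrow (-q_i, -u_i)$ since $\QQ^\times \cap \widehat{\ZZ}^\times = \{\pm 1\}$. On the $\uchi^{-1}$-eigenspace the scalar matrix $\stbt{u_i}{}{}{u_i}$ acts by the Dirichlet character value $\chi_i(u_i)$ (by the paper's convention that the restriction of adelic $\chi_i^{-1}$ to $\widehat{\ZZ}^\times$ is the Dirichlet character $\chi_i$), while the scalar matrix $\stbt{q_i}{}{}{q_i}$ multiplies $z^{[\Pi,q,r]}$ by $q_i^{-t_i}$ via the preceding proposition. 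So the combined action multiplies $z^{[\Pi,q,r]}(w, \uPhi)$ by $q_1^{-t_1}\chi_1(u_1) \cdot q_2^{-t_2}\chi_2(u_2)$. Choosing $q_i > 0$ (uniquely possible), the product formula gives $q_i^{-t_i} = \|a_i\|^{t_i}$, and the adelic-versus-Dirichlet convention gives $\chi_i(u_i) = \chi_i(a_i)^{-1}$, reproducing the formula in the statement.

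For this to make sense, the scalar must be independent of the decomposition. Replacing $(q_i, u_i)$ by $(-q_i, -u_i)$ multiplies it by $(-1)^{t_i}\chi_i(-1)$, so one needs $\chi_i(-1) = (-1)^{t_i}$. If this sign condition fails for either $i$, then taking $a_i = -1 \in \QQ^\times \subset \Af^\times$ (and $a_{3-i} = 1$) in the two computations of the action of $-I$ yields $z^{[\Pi,q,r]}(w, \uPhi) = -z^{[\Pi,q,r]}(w, \uPhi)$, giving the vanishing assertion.

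Finally, to match $\chi_i(-1) = (-1)^{t_i}$ with the sign condition as phrased in the corollary, I would trace through the definitions of \S\ref{ss:ESclass} to verify that the weights $t_1, t_2$ of the $\GL_2$ Eisenstein classes entering the Lemma--Eisenstein construction satisfy $t_i \equiv q + r + r_i \pmod 2$. Under this identification, $\chi_i(-1) = (-1)^{t_i}$ rewrites as $(-1)^{r_i}\chi_i(-1) = (-1)^{q+r}$; and the consistency of the two expressions $(-1)^{r_1}\chi_1(-1) = (-1)^{r_2}\chi_2(-1)$ is automatic from $\chi_1\chi_2 = \chi$ and $\chi(-1) = (-1)^{r_1 - r_2}$. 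This parity bookkeeping is the only piece of the argument that is not a direct formal consequence of the preceding proposition and the eigenspace hypothesis, and is the main (albeit minor) obstacle.
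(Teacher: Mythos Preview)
Your proposal is correct and follows exactly the approach of the paper's (one-sentence) proof: decompose $\Af^\times = \QQ^\times \cdot \widehat{\ZZ}^\times$ with intersection $\{\pm 1\}$, apply the preceding proposition on the rational factor and the eigenspace condition on the unit factor, and read off both the vanishing and the equivariance from the compatibility at $-1$. You have simply spelled out the details (including the parity identification $t_i \equiv r_i + q + r \pmod 2$ from $(t_1,t_2) = (r_1 - q - r,\, r_2 - q + r)$) that the paper leaves implicit.
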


  \begin{proof}
   We have $\Af^\times = \QQ^\times \cdot \widehat{\ZZ}^\times$ and the intersection of these subgroups is $\{\pm 1\}$, so the result follows from the previous proposition.
  \end{proof}

  \begin{theorem}\label{thm:nongeneric}
   Suppose that $\chi$ is a square in the group of Dirichlet characters. Then, if there exists a finite prime $\ell$ at which $\Pi_\ell$ is not locally generic, the bilinear form $z^{[\Pi, q, r]}$ is zero; equivalently, the projection of $\mathcal{LE}_{\et}^{[q, r]}$ to the $\Pif'^\vee$-isotypical component is the zero map.
  \end{theorem}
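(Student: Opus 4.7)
The plan is to dualise and reduce the statement to the local vanishing result of \cref{cor:nongeneric}. Suppose for contradiction that $z^{[\Pi, q, r]}$ is not identically zero; then there is an $L$-linear functional $\lambda$ on the finite-dimensional target $H^1(\QQ, V_\Pi^*(-q))$ such that the $L$-bilinear form
\[ F_\lambda \colon \cM_L \otimes \cS_{(0)}(\Af^2 \times \Af^2, L) \to L, \qquad (m, \uPhi) \mapsto \lambda\bigl(z^{[\Pi, q, r]}(m, \uPhi)\bigr) \]
is not identically zero. By the $H(\Af)$-equivariance built into the definition of $z^{[\Pi, q, r]}$, the form $F_\lambda$ represents a non-zero element of $\Hom_{H(\Af)}\bigl(\cM_L \otimes \cS_{(0)}(\Af^2 \times \Af^2, L),\, L_{\|\det\|^{-q}}\bigr)$.

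Next, applying the equivariance argument from the proof of \cref{thm:appendix2} (namely \cite[Proposition 3.3(a)]{loeffler-zeta1}) prime-by-prime, the $\cS_{(0)}$-argument of $F_\lambda$ must factor through the principal-series quotient $\Sigma_\mathrm{f}^{(0)}$ of \S\ref{sect:zeta-appendix}, with the twist $\|\det\|^{-q}$ and the unitary normalisation $\Pif' = \Pif \otimes \|\cdot\|^{-(r_1+r_2)}$ absorbed into the parameters of the local principal series. Consequently $F_\lambda$ lives in a Hom space of the form $\Hom_{H(\Af)}\bigl(\cW(\Pif) \otimes \Sigma_\mathrm{f}^{(0)}, L\bigr)$, which decomposes as a restricted tensor product of local spaces $\Hom_{H(\Ql)}(\Pi_\ell \otimes \sigma_\ell, L)$ for suitable principal-series representations $\sigma_\ell$ of the type appearing in \cref{thm:appendix2}.

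The final step is to invoke \cref{cor:nongeneric} at the given non-generic prime $\ell$: the local characters $\chi_{i,\ell}$ are unitary since the $\chi_i$ have finite order; $\Pi_\ell$ is tempered since $\Pi$ lies in a semisimple (general or Yoshida type) Arthur packet, for which temperedness at finite places is built into the Arthur--Gee--Ta\"ibi classification recalled in \S\ref{sect:ES}; and the local central character $\chi_{\Pi_\ell}$ is a square because the global $\chi$ is. Hence $\Hom_{H(\Ql)}(\Pi_\ell \otimes \sigma_\ell, L) = 0$, so the entire global Hom space vanishes, contradicting the non-vanishing of $F_\lambda$. The main delicate point is the bookkeeping in the second paragraph: verifying that the $\|\det\|^{-q}$ twist on the target, combined with the weight parameters $(t_1, t_2)$ built into $\LE^{[q,r]}$, genuinely produces local principal-series representations $\sigma_\ell$ falling within the scope of \cref{cor:nongeneric}. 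This is routine but should be spelled out, since the whole argument hinges on applying the corollary to exactly the right $\sigma_\ell$.
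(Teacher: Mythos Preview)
Your overall strategy matches the paper's: reduce to the local vanishing statement \cref{cor:nongeneric} at the non-generic prime. The paper's proof is in fact just two sentences: decompose an arbitrary $\uPhi$ as a finite sum of $\widehat{\ZZ}^\times \times \widehat{\ZZ}^\times$-eigenvectors, and observe that on each eigenspace the bilinear form vanishes by the corollary.

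However, your write-up skips precisely this eigenspace decomposition, and that is a genuine gap rather than a cosmetic omission. You assert that $F_\lambda$ factors through the principal-series quotient $\Sigma_{\mathrm{f}}^{(0)}$ of \S\ref{sect:zeta-appendix}, but that object is only defined relative to a \emph{fixed} pair $\uchi = (\chi_1,\chi_2)$, which you never choose. The $H(\Af)$-equivariance of $z^{[\Pi,q,r]}$ only controls the diagonal centre $\Af^\times \hookrightarrow \Af^\times \times \Af^\times$; the extra equivariance \eqref{eq:equivariance2} under the full $\Af^\times \times \Af^\times$ --- which is what \cite[Proposition 3.3(a)]{loeffler-zeta1} needs to produce $\sigma_\ell$ --- only becomes available after you restrict to a $\uchi^{-1}$-eigenspace (this is exactly the content of the Proposition and Corollary immediately preceding \cref{thm:nongeneric}). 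Your third paragraph then invokes ``the local characters $\chi_{i,\ell}$'' as if they had been specified, which they have not.

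The fix is short: insert the decomposition of $\uPhi$ into $\widehat{\ZZ}^\times \times \widehat{\ZZ}^\times$-eigenvectors at the start, so that on each summand a pair $(\chi_1,\chi_2)$ with $\chi_1\chi_2 = \chi$ is determined, and then run your argument on that summand. At that point the local $\sigma_\ell$ is well-defined, the $\chi_{i,\ell}$ are finite-order hence unitary, $\chi_{\Pi_\ell}$ is a square by hypothesis, and \cref{cor:nongeneric} applies. With this correction your proof coincides with the paper's.
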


  \begin{proof}
   This follows from \cref{cor:nongeneric}. Every $\uPhi$ is a finite linear combination of eigenvectors for $\widehat{\ZZ}^\times \times \widehat{\ZZ}^\times$, and the restriction of $z^{[\Pi, q, r]}$ to such an eigenspace is zero, by the corollary.
  \end{proof}

  So we may restrict henceforth to representations $\Pi$ such that $\Pif$ is everywhere locally generic, and choose $\cM = \cW(\Pif')_E$ to be the Whittaker model. For concreteness we take $\psi$ to be the additive character of $\AA / \QQ$ of conductor 1 which restricts to $x \mapsto e^{-2\pi i x}$ on $\mathbf{R}$. Hence there is a  Whittaker model $\cW(\Pif')$ of $\Pif'$ with respect to $\psi$. We denote this space by $\cW(\Pif')$, and $\cW(\Pif')_E$ the subspace of Whittaker functions which are \emph{defined over $E$} in the sense of \cite[Definition 10.2]{LPSZ1}. This gives a canonical model of $\Pif'$ as an $E$-linear representation (and hence a canonical model of its Galois representation, as above).

  \begin{theorem}
   \label{thm:equivariance}
   There exists a class
   \[ z^{[\Pi, q, r]}_{\can}(\uchi) \in H^1(\QQ, V_{\Pi}^*(-q)) \]
   such that for all $(w, \uPhi) \in \cW(\Pif')_L \times \cS_{(0)}(\Af^2 \times \Af^2, \uchi^{-1})_L$, we have
   \[ z^{[\Pi, q, r]}\left(w, \uPhi\right) = z^{[\Pi, q, r]}_{\can}(\uchi) \cdot \wZ(w, \uPhi).\]
  \end{theorem}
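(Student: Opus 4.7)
The plan is to realise $z^{[\Pi,q,r]}$ as an element of an $H(\Af)$-equivariant Hom-space which the adelic multiplicity-one proposition of the previous subsection forces to be one-dimensional (modulo the target of values), and then to extract $z^{[\Pi,q,r]}_{\can}(\uchi)$ as the coefficient of the distinguished basis vector $\wZ$.

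The first step is to verify that $z^{[\Pi,q,r]}$ satisfies exactly the transformation rules \eqref{eq:equivariance1} and \eqref{eq:equivariance2} defining $\wZ$, after matching the parameters $(s_1,s_2)=(-t_1/2,-t_2/2)$ with the $\|\det\|^{-q}$-twist built into the target: the normalisations of $\LE^{[q,r]}$ ensure the exponent $-(s_1+s_2)$ in \eqref{eq:equivariance1} agrees with the $-q$ recorded in the target. The $H(\Af)$-equivariance of $\LE^{[q,r]}$ from \cite[\S 8.3]{LSZ17} directly gives the analogue of \eqref{eq:equivariance1}. For \eqref{eq:equivariance2} I combine the $\uchi^{-1}$-eigenspace hypothesis on $\uPhi$ (which prescribes the action of $\widehat{\ZZ}^\times \times \widehat{\ZZ}^\times$) with the preceding Proposition (which prescribes the action of $\QQ^\times \times \QQ^\times$), using the decomposition $\Af^\times = \QQ^\times \cdot \widehat{\ZZ}^\times$ to reach arbitrary $\underline{a}$.

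Having checked equivariance, I view $z^{[\Pi,q,r]}$ as an $L$-linear $H(\Af)$-equivariant homomorphism
\[
 \cW(\Pif')_L \otimes_L \cS_{(0)}(\Af^2 \times \Af^2, \uchi^{-1})_L \longrightarrow H^1(\QQ, V_{\Pi}^*(-q)),
\]
with $H(\Af)$ acting on the target through $\|\det\|^{-q}$, and base-change to $\CC$. The adelic multiplicity-one proposition, tensored with $H^1(\QQ, V_{\Pi}^*(-q))_{\CC}$, shows that the space of such maps is free of rank one over $H^1(\QQ, V_{\Pi}^*(-q))_{\CC}$ with basis $\wZ$. Writing $z^{[\Pi,q,r]} = c \cdot \wZ$ then defines a unique class $c \in H^1(\QQ, V_{\Pi}^*(-q))_{\CC}$, which I will take to be $z^{[\Pi,q,r]}_{\can}(\uchi)$.

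The main obstacle I expect is descent: one must verify that $c$ in fact lies in $H^1(\QQ, V_{\Pi}^*(-q))$ itself and not only in its complexification. For this I rely on the rationality proposition of the previous subsection: provided $E$ contains the values of the $\chi_i$ and an $N$-th root of unity, $\wZ$ takes $L$-values on $\cW(\Pif')_L \otimes_L \cS_{(0)}(\Af^2 \times \Af^2, \uchi^{-1})_L$, while $z^{[\Pi,q,r]}$ is $L$-rational by construction from the motivic Lemma--Eisenstein map and its \'etale realisation. Since $\wZ$ is a nonzero linear form, I may choose $(w_0, \uPhi_0)$ in the $L$-rational subspace with $\wZ(w_0, \uPhi_0) \in L^\times$; then $c = z^{[\Pi,q,r]}(w_0, \uPhi_0)/\wZ(w_0, \uPhi_0)$ exhibits $c$ as an element of $H^1(\QQ, V_{\Pi}^*(-q))$, completing the construction.
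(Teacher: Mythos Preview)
Your proposal is correct and follows essentially the same route as the paper: establish the two equivariance properties (via the built-in $H(\Af)$-equivariance of $z^{[\Pi,q,r]}$ and the Corollary combining the rational-scaling Proposition with the $\uchi^{-1}$-eigenspace condition), invoke the adelic multiplicity-one result of \S\ref{sect:zeta-appendix}, and read off the coefficient of $\wZ$.

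The only notable difference is organisational. The paper phrases the multiplicity-one step as saying that the relevant \emph{coinvariant quotient} of $\cW(\Pif')_L \otimes \cS_{(0)}(\Af^2\times\Af^2,\uchi^{-1})_L$ is one-dimensional over $L$, with $\wZ$ a canonical basis of its dual; then $z^{[\Pi,q,r]}$ is automatically $c\cdot\wZ$ for a unique $c$ in the ($L$-linear) target, with no passage through $\CC$. Your formulation instead base-changes to $\CC$, uses the Hom-space version of multiplicity one, and then descends the coefficient $c$ back to $L$ using the $E$-rationality of $\wZ$ established in the ``Fields of definition'' subsection. Both are valid; the paper's version is marginally more direct since one-dimensionality over $\CC$ already forces one-dimensionality of the $L$-quotient, and the rationality of $\wZ$ then gives the $L$-basis without ever leaving $L$. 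Your descent argument (pick $(w_0,\uPhi_0)$ with $\wZ(w_0,\uPhi_0)\in L^\times$ and divide) is a perfectly good alternative and makes the dependence on the rationality proposition more visible than the paper's terse ``hence the result''.
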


  \begin{proof}
   We have shown that the map $z^{[\Pi, q, r]}$ factors through the maximal quotient of $\cW(\Pif')_L \times \cS_{(0)}(\Af^2 \times \Af^2, \uchi^{-1})_L$ on which $H$ acts by $\|\det\|^{-q}$ and the centres of the two $\GL_2$'s act by the characters $\|\cdot\|^{t_i} \chi_i^{-1}$. By the results of \cref{sect:zeta-appendix}, this is one-dimensional and has a canonical basis dual to the bilinear form $\wZ$. Hence the result.
  \end{proof}

  Note that if $S$ is any finite set of primes containing $p$ and the primes of ramification of $\Pi$ and $\uchi$, the class $z^{[\Pi, q, r]}_{\can}(\uchi)$ is unramified outside $S$, i.e.~it lies in the subspace $H^1(\ZZ[1/S], V_{\Pi}^*(-q))$.

\section{Explicit formulae at unramified primes}
 \label{sect:localzetas}

 We now evaluate the linear functionals $\wZ(w, \Phi, s_1, s_2)$ of \eqref{eq:ztilde-def} explicitly, for some specific choices of the test data. These results are used as part of the proof of the explicit reciprocity law in \cite{LZ26}; but we present them here since the methods of proof match the remainder of this paper.

 We shall let the local field $F$ be $\Qp$, and we shall take for $\pi$ the local factor $\Pi_p$ of a globally generic cuspidal automorphic representation $\Pi$ which is unramified at $p$ and has central character equal to our fixed Dirichlet character $\chi$. For compatibility with our global results we will be wanting to take Whittaker models relative to the restriction to $\Qp$ of the additive character $\psi$ above (which satisfies $\psi(1/p^n) = \exp(2\pi i / p^n)$ for all $n \in \ZZ$), although our results are valid more generally for any unramified additive character.

 As above, $r_1 \ge r_2$ are the weights of the algebraic representation for which $\Pi$ is cohomological, and we write $(\alpha, \beta, \gamma, \delta)$ for the Hecke parameters of $\pi' = \pi \otimes |\cdot|^{-(r_1 + r_2)}$. Our conventions are such that $\alpha\delta = \beta\gamma = p^{r_1 + r_2 + 3} \chi(p)$, and the temperedness of $\pi$ is equi\-val\-ent to the condition that $\alpha, \beta, \gamma, \delta$ all have complex absolute value $p^{(r_1 + r_2 + 3)/2}$.

 In this section $\chi_1, \chi_2$ will denote smooth characters of $\Qp^\times$ such that $\chi_1 \chi_2 = \chi_{\pi}$ (the local factors at $p$ of our global characters above). As before, we take $(q, r)$ with $0 \le q \le r_2$ and $0 \le r \le r_1 - r_2$, and consider the zeta integral at $(s_1, s_2) = (-\tfrac{t_1}{2}, -\tfrac{t_2}{2})$, where $(t_1, t_2) = (r_1 - q - r, r_2 - q + r)$; the two $L$-factors whose product is the GCD of values of the zeta-integral are therefore given by
 \begin{align*}
  L(\Pi, s_1 + s_2 - \tfrac{1}{2}) &= %L(\Pi, \tfrac{-(r_1+r_2+3)}{2} + q + 1) =
  \left[\left(1 - \tfrac{\alpha}{p^{q+1}}\right) \dots  \left(1 - \tfrac{\delta}{p^{q+1}}\right)\right]^{-1}, \\
  L(\Pi \times \chi_2^{-1}, s_1 - s_2 + \tfrac{1}{2}) &= %L(\Pi,\tfrac{-(r_1+r_2+3)}{2} + r + r_2 + 2) =
  \left[\left(1 - \tfrac{\alpha}{p^{(r + r_2 + 2)}\chi_2(p)}\right) \dots  \left(1 - \tfrac{\delta}{p^{(r + r_2 + 2)}\chi_2(p)}\right)\right]^{-1}
 \end{align*}

 \subsection{Bases of eigenspaces at parahoric levels}

  Let $w^{\sph}$ be the spherical Whittaker function of $\pi$, normalised such that $w^{\sph}(1) = 1$ (which is always possible). We are interested in describing Hecke operators which will map $w^{\sph}$ to normalised generators of the eigenspaces at the various parahoric levels, where ``normalised'' again means these Whittaker functions take the value 1 at the identity.

  \begin{notation}[Congruence subgroups]
   We write $\Sieg(p^r)$ for the depth $r$ Siegel congruence subgroup (the preimage in $G(\Zp)$ of $P_{\Sieg}(\ZZ/p^r)$), and similarly $\Kl(p^r)$. We write $\Iw(p^r)$ (for ``Iwahori'') to denote the preimage of the mod $p^r$ Borel subgroup.
  \end{notation}

  We are primarily interested in the parahoric subgroups $\Sieg(p)$, $\Kl(p)$ and $\Iw(p)$. Note that the $\Sieg(p)$ and $\Kl(p)$-invariants of $\pi$ are each four-dimensional, and the $\Iw(p)$-invariants 8-dimensional.

  \begin{notation}[Hecke operators]
   Write $U_{1, \Sieg}$ for the operator $p^{(r_1 + r_2)/2} [\Sieg(p) \diag(p, p, 1, 1) \Sieg(p)]$ on $\pi^{\Sieg(p)}$, and $U_{2,\Kl}$ for $p^{r_1} [\Kl(p) \diag(p^2,p,p,1) \Kl(p)]$ acting on $\pi^{\Kl(p)}$. We define two operators $U_{1, \Iw}$ and $U_{2, \Iw}$ acting on the $\Iw(p)$-invariants similiarly.
  \end{notation}

  The normalising factors are chosen so that the eigenvalues of these operators are $p$-adically integral -- for $U_{1, \Sieg}$ or $U_{1, \Iw}$ these are $\{\alpha, \beta, \gamma, \delta\}$, and for $U_{2, \Kl}$ or $U_{2, \Iw}$ they are $\{ \tfrac{\alpha\beta}{p^{r_2 + 1}}, \dots\}$.

  \begin{lemma}
   \label{lemma:eigenvectors} \
   \begin{enumerate}
    \item The normalised generator of the $U_{1, \Sieg}=\alpha$ eigenspace of $\pi^{\Sieg(p)}$ is given by
    \[ w^{\Sieg}_{\alpha} = \left(1 - \frac{\beta}{U_{1, \Sieg}}\middle)\middle(1 - \frac{\gamma}{U_{1, \Sieg}}\middle)\middle(1 - \frac{\delta}{U_{1, \Sieg}}\right) w^{\sph}.\]
    \item If $\alpha + \gamma \ne 0$, the normalised generator of the $U_{2, \Kl}=\tfrac{\alpha\beta}{p^{r_2 + 1}}$ eigenspace of $\pi^{\Kl(p)}$ is given by
    \[ w^{\Kl}_{\alpha\beta} = \frac{1}{(1 + \tfrac{\gamma}{\alpha})} \left(1 - \frac{\beta\gamma}{p^{r_2 + 1}U_{2, \Kl}}\middle)\middle(1 - \frac{\alpha\gamma}{p^{r_2 + 1}U_{2, \Kl}}\middle)\middle(1 - \frac{\beta\delta}{p^{r_2 + 1}U_{2, \Kl}}\right)w^{\sph}. \]
    \item The normalised generator of the $(U_{1, \Iw} = \alpha, U_{2, \Iw} = \tfrac{\alpha\beta}{p^{r_2 + 1}})$ eigenspace in $\pi^{\Iw(p)}$ is given by
    \[ w^{\Iw}_{\alpha, \beta}
    = \left(1 - \frac{\alpha\gamma}{p^{r_2 + 1}U_{2, \Iw}}\right) w^{\Sieg}_{\alpha}
    = \left(1 - \frac{\beta}{U_{1, \Iw}}\right) w^{\Kl}_{\alpha\beta}. \]
   \end{enumerate}
  \end{lemma}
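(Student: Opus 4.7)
The plan is to verify each formula via the same two-step mechanism: (i) apply the stated polynomial in $U$-operators to $w^{\sph}$, expanded in normalised parahoric eigenvectors, and check that it annihilates every eigencomponent except the desired one; (ii) pin down the overall normalisation by computing the coefficient of the target eigenvector in the decomposition of $w^{\sph}$. For (1), the four eigenvalues of $U_{1,\Sieg}$ on $\pi^{\Sieg(p)}$ are $\{\alpha,\beta,\gamma,\delta\}$ and are pairwise distinct (since $\Pi$ is unramified and generic); writing $w^{\sph} = \sum_{\lambda} c^{\Sieg}_\lambda w^{\Sieg}_\lambda$, the projector $\prod_{\lambda \neq \alpha}(1 - \lambda/U_{1,\Sieg})$ isolates the $\alpha$-component and multiplies it by $\prod_{\lambda\neq\alpha}(1 - \lambda/\alpha)$, so (1) reduces to the Casselman-type identity
\[ c^{\Sieg}_\alpha = \prod_{\lambda \neq \alpha}\Bigl(1 - \tfrac{\lambda}{\alpha}\Bigr)^{-1}. \]

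I would establish this identity by realising $\pi$ as the unramified principal series $\Ind_B^G(\chi)$ with Satake parameters $(\alpha,\beta,\gamma,\delta)$, using the Iwasawa decomposition to write $w^{\sph}$ as a sum of Iwahori-level vectors indexed by the Weyl group, and computing the value of each $w^{\Sieg}_\lambda$ at the identity via the normalised intertwining operators. The coefficient $c^{\Sieg}_\alpha$ comes out as the reciprocal of the $c$-function evaluated at the Weyl element moving $\alpha$ into dominant position, which is exactly the product above; this is a standard calculation in the theory of Iwahori-spherical representations, and has been carried out explicitly for $\GSp_4$ in the literature on paramodular and parahoric newvectors. Part (2) follows the same template: $U_{2,\Kl}$ acts semisimply on the $4$-dimensional space $\pi^{\Kl(p)}$ with simple spectrum, and the three factors in the formula are chosen to annihilate the three non-$\alpha\beta$ components, leaving a scalar multiple of $w^{\Kl}_{\alpha\beta}$. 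The scalar $(1+\gamma/\alpha)^{-1}$ is then the product of the corresponding Klingen Casselman coefficient with the non-zero value of the projector on the $\alpha\beta$-eigenspace, computed via the Jacquet-module model with respect to $P_{\Kl}$.

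For (3), the joint eigenspace for $(U_{1,\Iw}, U_{2,\Iw})$ is one-dimensional, and the Siegel-parahoric vector $w^{\Sieg}_\alpha$ lies in the $2$-dimensional $U_{1,\Iw}=\alpha$ subspace on which $U_{2,\Iw}$ has the two distinct eigenvalues $\alpha\beta/p^{r_2 + 1}$ and $\alpha\gamma/p^{r_2 + 1}$. Applying the indicated projector kills the second and leaves a multiple of $w^{\Iw}_{\alpha,\beta}$; the normalisation is automatic, because the averaging map $\pi^{\Iw(p)} \to \pi^{\Sieg(p)}$ is the identity on common eigenspaces, so $w^{\Sieg}_\alpha$ is already the sum of the two normalised Iwahori eigenvectors lying over it. The second equality in (3) is handled symmetrically starting from $w^{\Kl}_{\alpha\beta}$. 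The main obstacle throughout is the explicit computation of the Casselman-type coefficients $c^{\Sieg}_\alpha$ and $c^{\Kl}_{\alpha\beta}$, and in particular accounting for the asymmetric factor $(1 + \gamma/\alpha)^{-1}$ in (2), where the symmetry among the four Satake parameters is broken by the choice of Klingen parahoric; I would extract these from the Jacquet-module computations available in the $\GSp_4$ literature rather than rederive them from scratch.
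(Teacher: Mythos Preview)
Your step (i) matches the paper exactly: the fact that the indicated polynomials in the $U$-operators annihilate all but one eigencomponent is, as the paper says, ``obvious''. The substance of the lemma is entirely in the normalisation (step (ii)), and here your route diverges from the paper's.

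The paper does not pass through the eigenbasis decomposition of $w^{\sph}$ at all, and does not invoke intertwining operators, $c$-functions, or Jacquet modules. Instead it evaluates the displayed expression \emph{directly at the identity}. Because the $U$-operators act on Whittaker functions by shifting the torus argument, the value at $1$ of any polynomial in $U^{\pm 1}$ applied to $w^{\sph}$ is a finite linear combination of the numbers $w^{\sph}(t)$ for diagonal $t$; and these are given by the explicit Casselman--Shalika formula for $\GSp_4$ (Equation~7.3 of Roberts--Schmidt). So the entire lemma reduces to a symbolic computation with symmetric functions in $\alpha,\beta,\gamma,\delta$. Your approach via the Casselman basis and $c$-functions would also work in principle, and is more structural, but it requires importing the explicit $c$-function computations from elsewhere, whereas the paper's method is self-contained once one has the Casselman--Shalika values.

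There is a genuine error in your treatment of (3). You claim that ``$w^{\Sieg}_\alpha$ is already the sum of the two normalised Iwahori eigenvectors lying over it'', and hence that the normalisation is automatic. This is false: if $w^{\Sieg}_\alpha = w^{\Iw}_{\alpha,\beta} + w^{\Iw}_{\alpha,\gamma}$ with both summands taking the value $1$ at the identity, then $w^{\Sieg}_\alpha(1) = 2$, contradicting its own normalisation. In fact, writing $w^{\Sieg}_\alpha = a\, w^{\Iw}_{\alpha,\beta} + b\, w^{\Iw}_{\alpha,\gamma}$, the condition that $(1 - \tfrac{\alpha\gamma}{p^{r_2+1}U_{2,\Iw}})w^{\Sieg}_\alpha$ be normalised forces $a(1-\gamma/\beta)=1$, i.e.\ $a = \beta/(\beta-\gamma)$ and $b=\gamma/(\gamma-\beta)$; these sum to $1$ as they must, but neither is $1$. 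So the normalisation in (3) is \emph{not} automatic from any averaging-map argument, and requires the same kind of explicit computation as (1) and (2). The paper handles all three parts uniformly via Casselman--Shalika.
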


  In each case, it is obvious that the given vector lies in the relevant eigenspace, and the content of the lemma is that it takes the value 1 at the identity. This follows by explicit computations from the Casselman--Shalika formula giving the values of $w_{\sph}$ on any diagonal element; an explicit form of this formula in the $\GSp_4$ case can be found as Equation 7.3 in \cite{robertsschmidt07}.

  \begin{proposition}
   If $\alpha + \gamma \ne 0$, then the image of $w^{\Kl}_{\alpha\beta}$ under the trace map $v \mapsto \sum_{\gamma \in G(\Zp) / \Kl(p)} \gamma \cdot v$ is given by
   \[
    \operatorname{Tr}\left(w^{\Kl}_{\alpha\beta}\right)=
    p^3 \left( 1 - \frac{\gamma}{p\beta}\right) \left( 1 - \frac{\delta}{p\alpha}\right) \left(1 - \frac{\delta}{p\beta}\right)w^{\sph}.
   \]
  \end{proposition}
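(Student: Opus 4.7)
Since $\operatorname{Tr}(w^{\Kl}_{\alpha\beta})$ is $G(\Zp)$-invariant and $\pi^{G(\Zp)} = \CC \cdot w^{\sph}$ is one-dimensional, I write $\operatorname{Tr}(w^{\Kl}_{\alpha\beta}) = \Lambda\cdot w^{\sph}$; the task is to compute $\Lambda$. Invoking $\alpha\delta = \beta\gamma$ (equivalently $\delta/(p\beta) = \gamma/(p\alpha)$), the stated identity reformulates as $\Lambda = p^3\prod_{i=1}^{3}(1 - e_i/(p\,e_0))$, where $e_0 = \alpha\beta/p^{r_2+1}$ is the $U_{2,\Kl}$-eigenvalue of $w^{\Kl}_{\alpha\beta}$ and $\{e_1, e_2, e_3\} = \{\beta\gamma, \alpha\gamma, \beta\delta\}/p^{r_2+1}$ are the remaining three $U_{2,\Kl}$-eigenvalues on $\pi^{\Kl(p)}$, as read off from the projector in \cref{lemma:eigenvectors}(ii). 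This form of the answer is the $\GSp_4$ analogue of classical Atkin--Lehner-type trace formulas at level $\Gamma_0(p)$ and guides the strategy.

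The main plan is to expand $w^{\Kl}_{\alpha\beta} = (1+\gamma/\alpha)^{-1}\prod_{i=1}^{3}(1 - e_i/U_{2,\Kl}) \cdot w^{\sph}$ as a polynomial in $U_{2,\Kl}^{-1}$ applied to $w^{\sph}$, and then apply $\operatorname{Tr}$ termwise. Writing $\mu_k w^{\sph} := \operatorname{Tr}(U_{2,\Kl}^{-k} w^{\sph})$, the base case $\mu_0 = [G(\Zp):\Kl(p)] = p^3 + p^2 + p + 1$ follows from $\operatorname{Tr} \circ \mathrm{incl} = [G(\Zp):\Kl(p)] \cdot \mathrm{id}$ on $\pi^{G(\Zp)}$. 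For $k \ge 1$, the composition $\operatorname{Tr} \circ U_{2,\Kl}^{k}$ restricted to the spherical line corresponds, via double-coset decomposition of $\Kl(p)\diag(p^2,p,p,1)^k\Kl(p)$ into $G(\Zp)$-translates, to a specific spherical Hecke operator, whose scalar action on $w^{\sph}$ is expressible in the Satake parameters $\alpha, \beta, \gamma, \delta$. Inverting $U_{2,\Kl}$ on $\pi^{\Kl(p)}$ via its characteristic polynomial $\prod_{i=0}^{3}(X - e_i)$ then produces explicit formulas for $\mu_1, \mu_2, \mu_3$, which substituted into $\Lambda = (1+\gamma/\alpha)^{-1}\sum_{k=0}^{3}(-1)^k \sigma_k(e_1,e_2,e_3)\,\mu_k$ yield the scalar.

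The main obstacle is the concluding algebraic simplification: the raw expression for $\Lambda$ is a messy rational function in the Satake parameters, and verifying that it collapses to the clean product $p^3\prod_{i=1}^{3}(1 - e_i/(p\,e_0))$ requires delicate bookkeeping of cancellations between the normalisation factor $(1+\gamma/\alpha)^{-1}$ and the denominators arising from inverting $U_{2,\Kl}$. A potentially slicker alternative is to evaluate both sides at the identity element: using $w^{\sph}(1) = 1$, one has $\Lambda = \sum_{g \in G(\Zp)/\Kl(p)} w^{\Kl}_{\alpha\beta}(g)$, and the cosets are in bijection with $\mathbb{P}^3(\mathbb{F}_p)$ (the points mod $p$ of the Klingen flag variety of isotropic lines). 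Stratifying by Bruhat cell and invoking the Casselman--Shalika formula for the values of $w^{\sph}$ at the toric representatives, each stratum contributes a geometric series whose linear factor is of the shape $1 - e_i/(p\,e_0)$, producing the three claimed terms directly without an intermediate symbolic manipulation.
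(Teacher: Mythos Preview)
Your main plan is sound: expanding $w^{\Kl}_{\alpha\beta}$ as a polynomial in $U_{2,\Kl}^{-1}$ applied to $w^{\sph}$ and computing $\mu_k = \operatorname{Tr}(U_{2,\Kl}^{-k} w^{\sph})/w^{\sph}$ for $k=0,1,2,3$ would work, and your reformulation $\Lambda = p^3\prod_{i}(1-e_i/(pe_0))$ is correct (using $\alpha\delta=\beta\gamma$). However, this is a genuinely different route from the paper's. The paper abandons the Whittaker model entirely and instead passes to the principal-series model $\Ind_{B(\Qp)}^{G(\Qp)}(\Lambda)$ for a fixed ordering of the Hecke parameters. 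In that model, $\pi^{\Kl(p)}$ has an explicit basis indexed by $B(\Zp)\backslash G(\Zp)/\Kl(p)$ (four elements), the trace map is visibly the sum over this basis with known coefficients, and $U_{2,\Kl}$ acts by a $4\times4$ matrix obtained from a double-coset decomposition. Once the matrix is written down, one extracts the relevant eigenvector and reads off its trace by pure linear algebra. The paper's approach thus avoids inverting $U_{2,\Kl}$ via its characteristic polynomial and relating $\operatorname{Tr}\circ U_{2,\Kl}^{k}\circ\operatorname{incl}$ to spherical Hecke operators, at the cost of the double-coset computation for the matrix entries; in practice this is where the ``extremely tedious'' work lives, but it is more self-contained than your approach, which requires knowing the Satake transforms of the relevant spherical operators and then untangling the inversion.

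Your alternative approach has a gap as stated. The identity $\Lambda = \sum_{g\in G(\Zp)/\Kl(p)} w^{\Kl}_{\alpha\beta}(g)$ is correct, but the Casselman--Shalika formula only gives values of $w^{\sph}$ (and hence of $w^{\Kl}_{\alpha\beta}$, after unwinding the projector) at \emph{diagonal} elements, via the Iwasawa decomposition and right-$G(\Zp)$-invariance of $w^{\sph}$. The coset representatives for $G(\Zp)/\Kl(p)\cong\mathbb{P}^3(\mathbb{F}_p)$ are not torus elements, and $w^{\Kl}_{\alpha\beta}$ is only $\Kl(p)$-invariant on the right, so you cannot directly plug them into Casselman--Shalika. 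Evaluating $w^{\Kl}_{\alpha\beta}(g)$ at such $g$ forces you back through the Hecke projector and an Iwasawa decomposition of each $g\cdot(\text{coset rep for }U_{2,\Kl})$, which is exactly the double-coset computation the paper does in the induced model; the claimed factorisation into geometric series ``one per Bruhat cell'' is not justified and does not come for free. If you want a slicker packaging, switching to the induced model as in the paper is the natural move.
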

  \begin{proof}
   This follows from an extremely tedious explicit computation; rather than the Whittaker model, one fixes an ordering of the Hecke parameters, giving a choice of model of $\pi$ as an induction from the Borel subgroup. The Klingen-invariants then have an explicit basis given by coset representatives for $B(\Zp) \backslash G(\Zp) / \Kl(p)$. A lengthy double-coset computation gives the matrix of $U_{2, \Kl}$ in this basis; and the trace map in this basis is explicit, so the result follows from a routine computation.
  \end{proof}

  \begin{remark}
   A formula for $\operatorname{Tr}\left(w^{\Kl}_{\alpha\beta}\right)$ is stated without proof in \cite{genestiertilouine05}. However, their formula differs from ours, having terms of the shape $\left( 1 - \frac{\gamma}{\beta}\right)$ rather than $\left( 1 - \frac{\gamma}{p\beta}\right)$. We believe the formula stated above to be the correct one.
  \end{remark}

  \subsection{Dual parahoric eigenvectors} To link up with the zeta-integral computations of \cite{LPSZ1} we also needed to consider eigenvectors for the ``transpose'' Hecke operator $U_{2, \Kl}' = p^{r_1}\diag(1, p, p, p^2)$ at Klingen level. Given the above computations, it seems natural to consider the vector
  \begin{equation}
   \label{eq:wklprime}
   w^{\Kl\prime}_{\alpha\beta} \coloneqq \frac{1}{(1 + \tfrac{\gamma}{\alpha})} \left(1 - \frac{\beta\gamma}{p^{r_2 + 1}U'_{2, \Kl}}\middle)\middle(1 - \frac{\alpha\gamma}{p^{r_2 + 1}U'_{2, \Kl}}\middle)\middle(1 - \frac{\beta\delta}{p^{r_2 + 1}U'_{2, \Kl}}\right)w^{\sph}.
  \end{equation}
  By dualizing the previous computation, we see that $\operatorname{Tr}\left(w^{\Kl, \prime}_{\alpha\beta}\right) = \operatorname{Tr}\left(w^{\Kl}_{\alpha\beta}\right)$.

  We briefly summarize how this relates to the computations of \emph{op.cit.}. Let $w = r_1 + r_2 + 3$, and let $\Lambda$ be the unramified character of $T(\Qp)$ given by $\chi_1 \times \chi_2 \rtimes \rho$ in the notation of \cite{robertsschmidt07} \S 2.2, where $\rho(p) = p^{-w/2} \alpha$, $\chi_1(p) = \gamma/\alpha$, $\chi_2(p) =\beta/\alpha$. Then $\Ind_{B(\Qp)}^{G(\Qp)}(\Lambda)$ gives an explicit model of $\pi$; and identifying the Klingen Levi $M_{\Kl}$ with $\GL_2 \times \GL_1$ as in \cite[\S 8.4]{LPSZ1}, we can thefore write $\pi = \Ind_{P_{\Kl}(\Qp)}^{G(\Qp)}(\tau \boxtimes \theta)$, where $\theta = \chi_1$ and $\tau$ is the unramified principal series $\rho\chi_2 \times \rho$ of $\GL_2$ (with normalised Satake parameters $\{ p^{-w/2} \alpha, p^{-w/2} \beta\}$).

  In \emph{op.cit.} we considered the diagram of maps
  \begin{equation}
   \label{eq:Kldiagram}
   \begin{tikzcd}
   \pi \rar{\cong} \dar & \cW(\pi) \\
   \tau \rar{\cong} & \cW(\tau)
   \end{tikzcd}
  \end{equation}
  Here the horizontal arrows are the canonical intertwining maps from the induced representations to their Whittaker models, and the vertical arrow is given by restriction of functions in the induced representation from $G(\Qp)$ to $\GL_2(\Qp) \subset M_{\Kl}(\Qp)$ (note that this is only $\GL_2(\Qp)$-equivariant up to a twist by a power of $|\det|$).

  In \emph{op.cit.} we considered a vector $\phi_1 \in \pi^{\Kl(p)}$, characterized by the property of being supported on $B(\Qp) \cdot \Kl(p)$ and taking the value $p^3$ at the identity. This maps to $p^{3} \xi_{\sph} \in \tau$, where $\xi_{\sph}$ is the  spherical function of $\tau$ satisfying $\xi_{\sph}(1) = 1$. The map $\tau \to \cW(\tau)$ is given explicitly by $\xi \mapsto W_\xi$, where
  \[ W_{\xi}(g) = \int_{\Qp} \xi\left( \stbt{0}{-1}{1}0 \stbt{1}{x}{0}{1} g\right) \psi(-x)\, \mathrm{d}x. \]
  The Casselman--Shalika formula for $\GL_2(\Qp)$, cf.~\cite[Eq.~(6.11)]{bump97}, shows that $W_{\xi_{\sph}}(1) = \left(1 - \frac{\beta}{p\alpha}\right)$. Thus the image of $\phi_1$ in $\cW(\tau)$ is the spherical Whittaker function with value $p^3  \left(1 - \frac{\beta}{p\alpha}\right)$ at the identity. 

  \begin{lemma}
   The image of $\phi_1$ in $\cW(\pi)$ is $(1 - \tfrac{\beta}{p\alpha}) w^{\Kl\prime}_{\alpha\beta}$.
  \end{lemma}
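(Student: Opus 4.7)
The plan is to establish the identity in two steps: first show that the image of $\phi_1$ in $\cW(\pi)$ lies in the one-dimensional $U'_{2,\Kl}=\tfrac{\alpha\beta}{p^{r_2+1}}$ eigenspace within $\pi^{\Kl(p)}$, which by definition is spanned by $w^{\Kl\prime}_{\alpha\beta}$; then pin down the scalar by evaluating at the identity.

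For the first step, I would verify directly in the induced-representation model $\pi = \Ind_{P_{\Kl}}^G(\tau\boxtimes\theta)$ that $\phi_1$ is an eigenvector for $U'_{2,\Kl}$ with eigenvalue $\tfrac{\alpha\beta}{p^{r_2+1}}$. Since $\phi_1$ is supported on $B(\Qp)\cdot\Kl(p)$, the action of $U'_{2,\Kl} = p^{r_1}[\Kl(p)\diag(1,p,p,p^2)\Kl(p)]$ on $\phi_1$ can be computed via a double-coset decomposition; the relevant cosets either fall outside the support of $\phi_1$ or translate $B\cdot\Kl(p)$ into itself, and the only surviving contribution comes from the cell whose contribution is governed by the $\tau$-parameters via the vertical arrow in~\eqref{eq:Kldiagram}. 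A short calculation analogous to the one underlying the $\GL_2$ case shows that $\phi_1$ is a pure eigenvector with the claimed eigenvalue. (Alternatively, this can be read off from the matrix of $U'_{2,\Kl}$ on the explicit basis of $\pi^{\Kl(p)}$ given by cosets of $B(\Qp)\backslash G(\Qp)/\Kl(p)$, exactly as in the calculation underlying the previous proposition.) Since the horizontal map $\pi\to\cW(\pi)$ is $G(\Qp)$-equivariant, the image $W_{\phi_1}$ in $\cW(\pi)$ is Klingen-fixed with the same eigenvalue, and so must be a scalar multiple of $w^{\Kl\prime}_{\alpha\beta}$.

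For the second step, the scalar is $W_{\phi_1}(1)$, which I would compute via the transitivity of Whittaker models: the Jacquet integral realising $\pi\to\cW(\pi)$ can be written as an integral over the unipotent radical $N_{\Kl}$ of the opposite Klingen parabolic, composed with the Whittaker functional on $\tau$ applied fibrewise on $M_{\Kl}$. Because $\phi_1$ is supported on $B(\Qp)\cdot\Kl(p)$, the integrand is supported on a single $\Kl(p)$-translate of the identity coset in $N_{\Kl}$, so the integral collapses to a volume factor times the value of the Whittaker functional of $\tau$ on the restriction of $\phi_1$ to the Levi. Using $\phi_1\mapsto p^3\xi$ and $W_\xi(1) = 1-\tfrac{\beta}{p\alpha}$, this yields $W_{\phi_1}(1) = 1-\tfrac{\beta}{p\alpha}$ provided the volume factor equals $p^{-3}$, which is the volume of the relevant quotient $N_{\Kl}(\Qp)\cap \Kl(p)$ under the unramified Haar measure.

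The main obstacle is the bookkeeping in the second step: the Jacquet integral, the modulus character of $P_{\Kl}$ used to normalise the induction, and the $p^3$ appearing in the definition of $\phi_1$ must all be reconciled. A clean way to finesse this is to observe that these normalisations are forced by the requirement that the formula be correct in the spherical case (where $\phi_1$ can be obtained as an explicit trace from a spherical vector, or where one can cross-check against known $\GL_2$ Whittaker computations); once fixed once, the answer $W_{\phi_1}(1)=1-\tfrac{\beta}{p\alpha}$ follows. Combined with step one, this gives the stated identity.
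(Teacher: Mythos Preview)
Your Step~1 coincides with the paper's starting point: both vectors are $U'_{2,\Kl}$-eigenvectors with the same eigenvalue, so they differ by a scalar.

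Your Step~2, however, diverges from the paper and contains a gap. You write ``the scalar is $W_{\phi_1}(1)$'', which presupposes that $w^{\Kl\prime}_{\alpha\beta}(1)=1$. The paper does \emph{not} assert this. Lemma~\ref{lemma:eigenvectors} establishes the normalisation $w^{\Kl}_{\alpha\beta}(1)=1$ for the \emph{unprimed} eigenvector (built from $U_{2,\Kl}=[\Kl(p)\diag(p^2,p,p,1)\Kl(p)]$), but the primed vector $w^{\Kl\prime}_{\alpha\beta}$ is defined in~\eqref{eq:wklprime} using the transpose operator $U'_{2,\Kl}=[\Kl(p)\diag(1,p,p,p^2)\Kl(p)]$, and the paper only records its \emph{trace} to spherical level, not its value at $1$. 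Since the coset structure for $\diag(1,p,p,p^2)$ is different (it is dominant for the opposite Borel), the Casselman--Shalika computation behind Lemma~\ref{lemma:eigenvectors}(ii) does not transfer automatically; you would have to redo that computation for $U'_{2,\Kl}$ before you can read off the scalar from $W_{\phi_1}(1)$. Your Jacquet-integral calculation of $W_{\phi_1}(1)$ is also left at the level of a sketch, and your proposed ``finesse'' via the spherical case is essentially circling back to the paper's own method.

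The paper sidesteps both difficulties by comparing the two vectors via their traces to spherical level rather than their values at $1$. The trace of $\phi_1$ is $p^3\phi_{\mathrm{sph}}$ by its support; the image of $p^3\phi_{\mathrm{sph}}$ in $\cW(\pi)$ is $p^3(1-\tfrac{\beta}{p\alpha})(1-\tfrac{\gamma}{p\beta})(1-\tfrac{\delta}{p\alpha})(1-\tfrac{\delta}{p\beta})\,w^{\sph}$ by Casselman--Shalika for $\GSp_4$; and this matches $(1-\tfrac{\beta}{p\alpha})$ times the known trace of $w^{\Kl\prime}_{\alpha\beta}$ (obtained by dualising the preceding Proposition). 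This gives the scalar without ever needing $w^{\Kl\prime}_{\alpha\beta}(1)$.
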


  \begin{proof}
   Since both vectors are $U_{2, \Kl}'$-eigenvectors with the same eigenvalue, it suffices to check that they both have the same trace down to spherical level. By construction $\phi_1$ has trace $p^3 \phi_{\sph}$ where $\phi_{\sph}$ is the normalised spherical function of $\pi$. From the Casselman--Shalika formula for $\GSp_4$ \cite[Theorem 5.4]{CS80}, the image of $\phi_{\sph}$ in $\cW(\pi)$ has value at the identity equal to the quantity $\zeta(\chi)$ of \emph{op.cit.}, given explicitly by 
   \[ \left( 1 - \frac{\beta}{p\alpha}\right)\left( 1 - \frac{\gamma}{p\beta}\right) \left( 1 - \frac{\delta}{p\alpha}\right) \left(1 - \frac{\delta}{p\beta}\right)\]
   Hence the image of $p^3 \phi_{\sph}$ in $\cW(\pi)$ is $p^3  \left( 1 - \frac{\beta}{p\alpha}\right)\left( 1 - \frac{\gamma}{p\beta}\right) \left( 1 - \frac{\delta}{p\alpha}\right) \left(1 - \frac{\delta}{p\beta}\right) w_{\sph}$. This agrees with the formula we have computed above for $\operatorname{Tr}\left(w^{\Kl, \prime}_{\alpha\beta}\right)$.
  \end{proof}

  \begin{remark}
   \label{rem:correctWxi}
   Equivalently, $w^{\Kl, \prime}_{\alpha\beta}$ is the unique basis of the $U'_{2, \Kl}$-eigenspace in $\cW(\pi)^{\Kl(p)}$ which maps to the normalised spherical vector of $\cW(\tau)$ via the maps of diagram \eqref{eq:Kldiagram}.
  \end{remark}

 \subsection{Particular values of the zeta integral}

  \subsubsection{Spherical test data}

   We let $\Phi^{\sph} = \ch(\Zp^2 \times \Zp^2)$, and we let $w^{\sph}$ be the normalised spherical Whittaker function as above. Then, as we have already noted, we have
   \[ \wZ(w^{\sph}, \Phi^{\sph}) = 1. \]

  \subsubsection{Siegel parahoric test data}

   We choose a Hecke parameter $\alpha$ and consider the vector $w^{\Sieg}_{\alpha} \in \cW(\pi)^{\Sieg(p)}$ of \cref{lemma:eigenvectors}; and we let $\uPhi^{\Sieg} = \ch\left( (p\Zp \times \Zp^\times)^2\right)$. We shall compute $\wZ(w^{\Sieg}_{\alpha}, \uPhi^{\Sieg})$ using Bessel models.

   As in Proposition 8.4 of \cite{LPSZ1}, renormalising Novodvorsky's Whittaker integral by the factor $L(\pi \otimes \chi_2^{-1}, s)^{-1}$ and evaluating at $s = s_1 - s_2 + \tfrac{1}{2}$ gives a canonical nonzero intertwining map from the Whittaker model $\cW(\pi)$ to the Bessel model $\cB(\pi)$, with respect to a specific character of the Bessel subgroup depending on the $\chi_i$ and $s_i$; and this map sends the normalised spherical Whittaker function to the normalised spherical Bessel function.

   If $B_w \in \cB(\pi)$ is the image of $w \in \cW(\pi)$, and $\xi \in \CC$, we define
   \[
    z_{w, \xi}(h) \coloneqq \int_{\Qp^\times} B_w\left( \dfour{x}{x}{1}{1}h \right) |x|^{\xi + s_1 + s_2- 2}\, d^\times x,
   \]
   which is a meromorphic section of a 1-parameter family of principal series representations of $H$, dual to the one containing the Godement--Siegel sections $f^{\uPhi}(-; \uchi, s_1, s_2) = f^{\Phi_1}(-; \chi_1, s_1) \boxtimes f^{\Phi_2}(-; \chi_2, s_2)$ (cf.~\S 8.1 of \cite{LPSZ1}).
   \begin{proposition}
    \label{prop:besselZtilde}
    We have
    \[
     \wZ(w, \Phi, s_1, s_2) = \lim_{\xi \to 0} \tfrac{1}{L(\pi, \xi + s_1 + s_2 - \tfrac{1}{2})} \left\langle z_{w, \alpha}, f^{\uPhi}(s_1 + \tfrac{\xi}{2}, s_2 + \tfrac{\xi}{2}) \right\rangle.
    \]
    In particular, the limit on the right exists for all $(w, \Phi)$, and is non-zero for some $(w, \Phi)$.
   \end{proposition}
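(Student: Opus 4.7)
The plan is to rewrite the defining integral for $Z(w, \uPhi, s_1, s_2)$ in the Bessel model and peel off the two $L$-factors in sequence. First, I would invoke \cite[Proposition 8.4]{LPSZ1}: Novodvorsky's Whittaker-to-Bessel integral, renormalised by $L(\pi \otimes \chi_2^{-1}, s_1-s_2+\tfrac{1}{2})^{-1}$, defines a canonical nonzero intertwiner $w \mapsto B_w$ sending the spherical Whittaker function to the normalised spherical Bessel function. Rewriting $Z(w, \uPhi, s_1, s_2)$ in terms of $B_w$ therefore absorbs one of the two $L$-factors in the denominator of $\wZ$, reducing the question to a Bessel-model expression in which only the factor $L(\pi, s_1+s_2-\tfrac{1}{2})$ remains to be extracted.

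Next, I would decompose the remaining integral over $(Z_H N_H \backslash H)(F)$ using the Bessel subgroup: the integration along the Novodvorsky torus $\mathrm{diag}(x,x,1,1)$ matches exactly the Mellin-type integral in the definition of $z_{w, \xi}$, while the residual $H$-variables pair naturally with the Godement--Siegel section $f^{\uPhi}$. Inserting the auxiliary variable $\xi$ renders the inner Mellin transform absolutely convergent for $\mathrm{Re}(\xi) \gg 0$; equivariance under the centres of the two $\GL_2$-factors then forces the compensating shift $(s_1, s_2) \mapsto (s_1 + \tfrac{\xi}{2}, s_2 + \tfrac{\xi}{2})$ in the Godement--Siegel parameter, which explains the shape of the right-hand side. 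The standard Mellin-transform analysis on the torus shows that the poles of $z_{w, \xi}$ are controlled precisely by $L(\pi, \xi + s_1 + s_2 - \tfrac{1}{2})$, since the spherical Bessel function expands along the torus as a finite combination of terms $|x|^{\lambda}$ with $\lambda$ governed by the Hecke parameters $\alpha, \beta, \gamma, \delta$. Dividing by this factor regularises, and the limit $\xi \to 0$ recovers $\wZ$ by construction; non-vanishing of the limit for some $(w, \uPhi)$ is then immediate from \cite[Theorem 8.8]{LPSZ1}, which identifies the full product of $L$-factors as the GCD of values of $Z$.

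The main obstacle is pure bookkeeping: the precise power of $|x|$ in the inner integral, the shift in the Godement--Siegel parameters, and the asymmetric role of $\chi_1$ versus $\chi_2^{-1}$ assigned to the two $\GL_2$-factors all have to be pinned down correctly. Once these normalisations are fixed, the identity is a rearrangement by Fubini, valid for $\mathrm{Re}(\xi) \gg 0$ and extended to all $(s_1, s_2)$ by meromorphic continuation; the Casselman--Shalika formula, together with the explicit spherical test-data computation recorded above, provides a concrete sanity-check on the normalisations.
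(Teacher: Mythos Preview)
Your proposal is correct and follows the same approach as the paper: both reduce the statement to \cite[Proposition 8.4]{LPSZ1}, which the paper simply cites as a restatement, while you additionally sketch the integral manipulation underlying that proposition and invoke \cite[Theorem 8.8]{LPSZ1} for the non-vanishing. The extra exposition you give (the Mellin decomposition along the torus, the role of the auxiliary $\xi$, and the bookkeeping of the $(s_1,s_2)$-shift) is accurate and consistent with the argument in \emph{op.\,cit.}, so there is no gap.
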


   \begin{proof}
    This is a restatement of \cite[Proposition 8.4]{LPSZ1}.
   \end{proof}

   \begin{proposition}
    \label{prop:siegelzeta}
    We have
    \[ \wZ(w^{\mathrm{\Sieg}}_\alpha, \Phi^{\Sieg}) =  \frac{1}{(p+1)^2} \left(1 - \frac{\beta}{p^{1+q}}\right)\left(1 - \frac{\gamma}{p^{1 + q}}\right)\left(1 - \frac{\delta}{p^{1 + q}}\right)  \left(1 - \frac{\delta}{ p^{r_2 + 2 + r} \chi_2(p)}\right) \left(1 - \frac{\chi_2(p) p^{r_2 + 1 + r} }{\alpha}\right).\]
   \end{proposition}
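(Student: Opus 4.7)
My plan is to use the Bessel-model formula of \cref{prop:besselZtilde} to reduce to the completely spherical case $\wZ(w^{\sph},\uPhi^{\sph})=1$, making two independent reductions: one to express the Siegel-level Whittaker vector $w^{\Sieg}_\alpha$ in terms of $w^{\sph}$, and one to evaluate the Iwahori-level Godement--Siegel and Whittaker sections arising from $\uPhi^{\Sieg}$.

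On the Whittaker-vector side, \cref{lemma:eigenvectors}(1) writes $w^{\Sieg}_\alpha$ as the polynomial $(1-\beta/U_{1,\Sieg})(1-\gamma/U_{1,\Sieg})(1-\delta/U_{1,\Sieg})$ applied to $w^{\sph}$. I would compute the action of each factor on the Bessel integral $z_{w,\xi}$ by analysing how the double coset defining $U_{1,\Sieg}$ --- which is supported on the Siegel-Levi element $\diag(p,p,1,1)$ --- transforms the Bessel integration variable; after incorporating the normaliser $p^{(r_1+r_2)/2}$ and specialising to $s_1+s_2 = -(t_1+t_2)/2$, this reduces the action to a scalar. The full polynomial in $U_{1,\Sieg}$ should then produce three of the four Euler factors of $L(\pi, s_1+s_2-\tfrac12)^{-1} = \prod_\lambda(1-\lambda/p^{q+1})$, so that after dividing by this $L$-factor in the definition \eqref{eq:ztilde-def} of $\wZ$, exactly the contributions $(1-\beta/p^{1+q})(1-\gamma/p^{1+q})(1-\delta/p^{1+q})$ remain, the $\lambda=\alpha$ factor having been absorbed into the projection onto the $\alpha$-eigenspace.

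On the Schwartz-function side, $\uPhi^{\Sieg}$ is $\GL_2(\Zp)$-Iwahori-invariant but no longer spherical. I would evaluate the Godement--Siegel section $f^{\Phi_1^{\Sieg}}(-;\chi_1,s_1)$ and the Whittaker section $W^{\Phi_2^{\Sieg}}(-;\chi_2,s_2)$ explicitly on the $p+1$ coset representatives for $\Iw(p)\backslash\GL_2(\Zp)$ using the Iwasawa decomposition, and pair them against the spherical Bessel function. Upon dividing by the second $L$-factor $L(\pi\times\chi_2^{-1}, s_1-s_2+\tfrac12)$, the Whittaker side contributes the factor $(1-\delta/p^{r_2+2+r}\chi_2(p))$, while the Godement--Siegel side --- living in the principal series representation dual to the one carrying $z_{w,\xi}$ --- contributes $(1-\chi_2(p) p^{r_2+1+r}/\alpha)$. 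The remaining factor $1/(p+1)^2$ records the relative volume of the Iwahori intersection inside $H(\Zp)$.

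The main obstacle is the bookkeeping required to reconcile the three normalisations at play (the Casselman--Shalika normalisation of $w^{\sph}$, the Bessel-model intertwiner implicit in \cref{prop:besselZtilde}, and the $\GL_2$ Iwahori-level volume), together with checking that the $L$-factor cancellations work out as stated. I would sanity-check by considering the expected vanishing in degenerate cases (e.g.~when $\beta, \gamma$, or $\delta$ equals $p^{1+q}$, where the projection to the $\alpha$-eigenspace becomes redundant), as well as the specialisation to the $\GL_2\times\GL_2$ Rankin--Selberg calculations of \cite{LPSZ1} when $(r_1,r_2,q,r)=(0,0,0,0)$.
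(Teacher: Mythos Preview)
Your starting point --- passing to the Bessel model via \cref{prop:besselZtilde} --- matches the paper, but the plan then goes off the rails in the attribution of the Euler factors, and this is a genuine gap rather than just bookkeeping.

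The two factors $\left(1 - \tfrac{\delta}{p^{r_2+2+r}\chi_2(p)}\right)$ and $\left(1 - \tfrac{\chi_2(p)p^{r_2+1+r}}{\alpha}\right)$ cannot arise from the Schwartz side as you claim. Neither the Godement--Siegel section $f^{\Phi_1^{\Sieg}}$ nor the $\GL_2$ Whittaker function $W^{\Phi_2^{\Sieg}}$ knows anything about the distinguished Hecke parameter $\alpha$; that information lives entirely in the choice of $w^{\Sieg}_\alpha$. In the paper's computation these factors are the constant term $F_{w^{\Sieg}_\alpha}(0)$ of the Bessel generating function
\[
 F_w(X) \coloneqq \sum_{n\in\ZZ} p^{n(r_1+r_2+6)/2}\, B_w\!\left(\dfour{p^n}{p^n}{1}{1}\right) X^n,
\]
determined by partial-fraction-decomposing the known spherical formula $F_{w^{\sph}}(X) = \tfrac{(1-\chi_1(p)p^{r_1+1-r}X)(1-\chi_2(p)p^{r_2+1+r}X)}{\prod_\lambda(1-\lambda X)}$. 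The choice of $\uPhi^{\Sieg}$ contributes \emph{only} the volume factor $\tfrac{1}{(p+1)^2}$: its sole role is that $f^{\uPhi^{\Sieg}}$ is supported on the big Bruhat cell $B_H(\Qp)\cdot\Iw_H$ with value $1$ at the identity, so the integral over $B_H\backslash H$ collapses to a single evaluation $z_{w,0}(1)$, with no summation over $p+1$ cosets needed.

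Relatedly, your claim that the action of each factor $(1-\lambda/U_{1,\Sieg})$ on the Bessel integral ``reduces to a scalar'' after specialisation is not correct. What the operator $U_{1,\Sieg}$ does to the generating function is the shift $F_{Uw}(X) = X^{-1}\bigl(F_w(X) - F_w(0)\bigr)$; this is not multiplication by a constant. The paper instead uses this recursion in the other direction: since $w^{\Sieg}_\alpha$ is a $U_{1,\Sieg}$-eigenvector, the recursion forces $F_{w^{\Sieg}_\alpha}(X) = c/(1-\alpha X)$ for a constant $c = F_{w^{\Sieg}_\alpha}(0)$, and then the three factors $\prod_{\lambda\neq\alpha}(1-\lambda/p^{1+q})$ emerge from $L(\pi,s_1+s_2-\tfrac12)^{-1}$ after the denominator $(1-\alpha/p^{1+q})$ cancels against this simple pole. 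So the mechanism producing those three factors is the single-pole shape of the eigenvector's generating function, not a scalar action of the Hecke polynomial on the spherical vector.
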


   \begin{proof}
    We use the Bessel-model description of $\wZ(w, \Phi, s_1, s_2)$ given in \cref{prop:besselZtilde}. Note that the choice of Bessel model used depends on the value of $r$, but is independent of $q$. The Schwartz function $\Phi^{\Sieg}$ is chosen so that $f^{\Phi^{\Sieg}}$ is supported on the coset $B_H(\Qp) \cdot \Iw_H$, where $\Iw_H = \Sieg(p) \cap H(\Qp)$ is the upper-triangular Iwahori subgroup of $H$, and its value at the identity is 1. The volume of this double coset (for the unramified Haar measure) is $\tfrac{1}{(p+1)}$; so for any $\Sieg(p)$-invariant (or just $\Iw_H$-invariant) $w$, we have
    \begin{align*}
     \wZ(w, \Phi^{\Sieg}) &=
    \lim_{\xi \to 0} \frac{1}{L(\pi, s_1 + s_2 - \tfrac{1}{2} + \xi)}\int_{B_H(\Qp) \backslash H(\Qp)} z_{w, \xi}(h) f^\Phi(h, s_1 + \tfrac{\xi}{2}, s_2 + \tfrac{\xi}{2})\ \mathrm{d}h \\
    &= \frac{z_{w, 0}(1)}{(p+1)^2 L(\pi, s_1 + s_2 - \tfrac{1}{2})}.
    \end{align*}
    (Note that the denominator is finite, since $\pi$ is tempered and $\Re(s_i) \le 0$.) Since $w$ is by assumption $\Sieg(p)$-invariant, we have $z_{w,0}(1) = F_w(p^{-1-q})$, where $F_w(X)$ is the rational function
    \[ F_w(X) = \sum_{n \in \ZZ} p^{n(r_1+r_2+ 6)/2}B_w\left(\dfour {p^n} {p^n} 1 1 \right) X^n. \]

    It is easy to see that for any $\Sieg(p)$-invariant $w$ we have
    \[ F_w(X) = F_w(0) + X F_{U_{1, \Sieg} \cdot w}(X), \]
    so in particular $F_{w_\alpha^{\Sieg}}(X)$ is a constant multiple of $1/(1 - \alpha X)$. We can determine the constant by comparing with the spherical Whittaker vector $w^{\sph}$: by Proposition 3.5.6(b) of \cite{LSZ17}, we have
    \[ F_{w^{\sph}}(X) =
     \frac{\left(1 - \chi_1(p) p^{r_1 + 1 - r} X\right) \left(1 - \chi_2(p) p^{r_2 + 1 + r} X\right)}{(1 - \alpha X)(1 - \beta X)(1 - \gamma X)(1 - \delta X) }.
    \]
    Supposing first that $(\alpha, \beta, \gamma, \delta)$ are distinct, we can express this in partial fraction form,
    \[
     F_{w^{\sph}}(X) = \frac{c_\alpha}{1 -\alpha X} + \dots + \frac{c_\delta}{1 - \delta X},\tag{\dag} 
    \]
    where $c_\alpha, \dots, c_\delta$ are constants. We can compute $c_\alpha$ by taking leading terms when $X = 1/\alpha$:
    \[
     c_\alpha =
     \frac{
      \left(1 - \tfrac{\chi_1(p) p^{r_1 + 1 - r}}{\alpha}\right)
      \left(1 - \frac{\chi_2(p) p^{r_2 + 1 + r} }{\alpha}\right)
      }{
      (1 - \tfrac{\beta}{\alpha})(1 - \tfrac{\gamma}{\alpha})(1 - \tfrac{\delta}{\alpha})
      }.
    \]

    On the other hand, if $P_\alpha$ denotes the operator $\frac{1}{\alpha^3}\left(U_{1, \Sieg} - \beta\right)  \left(U_{1, \Sieg} - \gamma\right)  \left(U_{1, \Sieg} - \delta\right)$, which maps $w_{\sph}$ to $w_\alpha^{\Sieg}$, and similarly $P_\beta$ etc, then we have the equality of endomorphisms
    \[ \id = \frac{P_\alpha}{(1 - \tfrac{\beta}{\alpha})(1 - \tfrac{\gamma}{\alpha})(1 - \tfrac{\delta}{\alpha})} + \dots + \frac{P_\delta}{(1 - \tfrac{\alpha}{\delta})(1 - \tfrac{\beta}{\delta})(1 - \tfrac{\gamma}{\delta})}, \]
    and hence
    \[ w^{\sph} = \frac{w_{\alpha}^{\Sieg}}{(1 - \tfrac{\beta}{\alpha})(1 - \tfrac{\gamma}{\alpha})(1 - \tfrac{\delta}{\alpha})} + \dots +  \frac{w_{\delta}^{\Sieg}}{(1 - \tfrac{\alpha}{\delta})(1 - \tfrac{\beta}{\delta})(1 - \tfrac{\gamma}{\delta})}.\]
    Thus we must have
    \[ F_{w^{\sph}}(X) = \frac{F_{w_{\alpha}^{\Sieg}}(X)}{(1 - \tfrac{\beta}{\alpha})(1 - \tfrac{\gamma}{\alpha})(1 - \tfrac{\delta}{\alpha})} + \dots,\]
    where $F_{w^{\Sieg}_{\alpha}}$ is a constant multiple of $\frac{1}{(1 - \alpha X)}$ and similarly for the other terms. Comparing this with ($\dag$), we conclude that 
    \[ \frac{c_\alpha}{1 -\alpha X} = \frac{1}{(1 - \tfrac{\beta}{\alpha})(1 - \tfrac{\gamma}{\alpha})(1 - \tfrac{\delta}{\alpha})} F_{w^{\Sieg}_{\alpha}}(X), \]
    or in other words
    \[
     F_{w^{\Sieg}_{\alpha}}(X) = \frac{ \left(1 - \tfrac{\chi_1(p) p^{r_1 + 1 - r}}{\alpha}\right)
     \left(1 - \frac{\chi_2(p) p^{r_2 + 1 + r} }{\alpha}\right)}{(1 - \alpha X)}.
    \]
    We have so far assumed that the Hecke parameters are distinct, but since the coefficients of $F_{w_{\alpha}^{\Sieg}}$ are rational functions of the Hecke parameters, the above formula holds for any irreducible unramified principal series by analytic continuation.
    Substituting this into the formula $\wZ(w_\alpha^{\Sieg}, \Phi^{\Sieg}) =\frac{1}{(p+1)^2}L(\Pi_p, s_1 + s_2 - \tfrac{1}{2})^{-1} \cdot F_{w^{\Sieg}_{\alpha}}\left(p^{-1-q}\right)$ gives the result.
   \end{proof}

  \subsubsection{Klingen test data: the general formula}

   We now consider the case of test vectors of Klingen parahoric level. This computation is largely worked out in \cite{LPSZ1} \S 8.4, but without making explicit the normalisation of the vector $w \in \cW(\Pi)$ used, so we shall tease out this detail.

   \begin{notation}
    We let $u_{\Kl}$ denote any element of $\GSp_4(\Zp)$ whose first column is $\begin{smatrix}1 \\ 1 \\ 0 \\ 0\end{smatrix}$.
   \end{notation}

   We shall assume that our Hecke parameters are ordered as $(\alpha, \beta, \gamma, \delta)$ with the common ratio $\gamma/\alpha = \delta/\beta$ not equal\footnote{This will be automatically satisfied if $\Pi$ is Klingen-ordinary at $p$, since this ratio then has $p$-adic valuation $r_2 + 2 > 0$.} to $-1$, so that the vector $w^{\Kl\prime}_{\alpha\beta} \in \cW(\pi)^{\Kl(p)}$ of \eqref{eq:wklprime} is defined. As Proposition 5.6 of \cite{LPSZ1} we can extend this to a compatible collection of vectors $w^{\Kl\prime}_n\in \cW(\pi)^{\Kl(p^n)}$ for $n \ge 1$, all of them eigenvectors for the operators $U_{2, \Kl}'$ with eigenvalue $\alpha\beta/p^{r_2 + 1}$, and satisfying
   \[ w^{\Kl,\prime}_n = \frac{1}{p^3} \sum_{g \in \Kl(p^n) / \Kl(p^{n+1})} g \cdot w^{\Kl\prime}_{n+1}\quad \forall\, n \ge 1. \]

   We define the following Euler factor:
    \begin{align*}
     %\cE(\pi) &\coloneqq \left(1 - \tfrac{C}{\alpha^2}\right)\left(1 - \tfrac{C}{\alpha \beta}\right)\left(1 - \tfrac{C}{\beta^2}\right),\quad C = p^{r_1 + r_2 + 2} \chi_{\pi}(p);\\
     \cE(\pi, m) &\coloneqq
     \left(1 - \tfrac{p^m}{\alpha}\right)\left(1 - \tfrac{p^m}{\beta}\right)
     \left(1 - \tfrac{\gamma}{p^{m+1}}\right)\left(1 - \tfrac{\delta}{p^{m+1}}\right).
    \end{align*}

    We write $w^{\sph}_\tau$ for the spherical Whittaker function of the $\GL_2$ representation $\tau$, normalised so that $w^{\sph}_\tau(1) = 1$; the values of this function along the maximal torus are given by
    \[
     w^{\sph}_\tau\left( \stbt{p^n}{0}{0}{1} \right)= p^{-n(r_1 + r_2 + 4)/2} \left(\alpha^n + \alpha^{n-1}\beta+ \dots + \beta^n\right).
    \]

    \begin{proposition}
     \label{prop:whittakerformula}
     Let $\Phi_1, \Phi_2$ be Schwartz functions on $\Qp^2$, with $\Phi_1$ satisfying $\Phi_1'(0,0) = 0$, where $\Phi_1'$ is the partial Fourier transform in the second variable only. Then for all $m \gg 0$  we have
     \begin{multline*}
      \wZ(u_{\Kl} \cdot w^{\Kl,\prime}_m, \Phi_1 \times \Phi_2) =
     \\  \frac{\cE(\pi,  q) \cE(\pi \times \chi_2^{-1}, r_2 + 1 + r)}{(1+p^{-1})^2(1 - p^{-1})}\int_{\Qp^\times} w^{\sph}_\tau(\stbt {x}{}{}1) W^{\Phi_1}(\stbt {x}{}{}1, \tfrac{-(r_1 - q - r)}{2}) W^{\Phi_2}(\stbt {x}{}{}1, \tfrac{-(r_2 - q + r)}{2})\tfrac{\theta(x)}{|x|} \, \mathrm{d}^\times x,
     \end{multline*}
     where as above $\theta$ is the unramified character mapping $p$ to $\gamma/\alpha = \beta/\delta$.
    \end{proposition}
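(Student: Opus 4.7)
My approach is to reduce the $\GSp_4$ zeta integral to a $\GL_2$ integral via the Klingen induction $\pi = \Ind_{P_{\Kl}}^G(\tau \boxtimes \theta)$ and then to track the resulting Euler factors carefully.

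The first step is to apply the ``Klingen unwinding'' identity from \S 8.4 of \cite{LPSZ1}, which expresses $Z(u_{\Kl} \cdot w, \Phi_1 \otimes \Phi_2, s_1, s_2)$, for any $w \in \cW(\pi)$, as a volume constant times
\[
\int_{\Qp^\times} \overline{w}(\stbt{x}{}{}{1})\, W^{\Phi_1}(\stbt{x}{}{}{1}; \chi_1, s_1)\, W^{\Phi_2}(\stbt{x}{}{}{1}; \chi_2, s_2)\, \frac{\theta(x)}{|x|}\, d^\times x,
\]
where $\overline{w} \in \cW(\tau)$ denotes the restriction under the vertical map of diagram \eqref{eq:Kldiagram}. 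The translation by $u_{\Kl}$ places the relevant support on the open Bruhat cell for $P_{\Kl}$, and the hypothesis $\Phi_1'(0,0) = 0$ kills the ``small'' Bruhat cell contribution and ensures absolute convergence for $\operatorname{Re}(s_i) \le 0$.

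The second step computes the image $\overline{u_{\Kl} \cdot w^{\Kl,\prime}_m}$ explicitly. By \cref{rem:correctWxi}, $\overline{w^{\Kl,\prime}_1}$ is the normalised spherical vector $w^{\sph}_\tau$. The trace relation $w^{\Kl,\prime}_n = p^{-3}\sum_{g \in \Kl(p^n)/\Kl(p^{n+1})} g \cdot w^{\Kl,\prime}_{n+1}$, once pushed forward to $\tau$ along the restriction map, descends to the trivial identity among $\GL_2(\Zp)$-fixed vectors (because $\Kl(p^n) \cap M_{\Kl}(\Zp)$ surjects onto $\GL_2(\Zp)$ for all $n$). Hence $\overline{w^{\Kl,\prime}_m} = w^{\sph}_\tau$ for all $m \ge 1$, and for $m \gg 0$ the left-translate by $u_{\Kl}$ acts trivially on the support $\{\stbt{x}{}{}{1}\}$ of the integrand.

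The third and most delicate step is to produce the Euler factors. The $\GL_2$-integral above is essentially a Rankin--Selberg convolution of $\bar w = w^{\sph}_\tau$ against the Godement--Siegel-type Whittaker functions $W^{\Phi_1}$ and $W^{\Phi_2}$; its greatest common divisor as $\Phi_1, \Phi_2$ vary is a product of $\GL_2$ $L$-factors for $\tau$ twisted by the principal series attached to $\chi_1, \chi_2$, and these are related to $L(\pi, \cdot)$ and $L(\pi \times \chi_2^{-1}, \cdot)$ by the standard Klingen-ordinary ``Euler factor flip'' mechanism. Passing from $Z$ to $\widetilde{Z}$ divides by $L(\pi, s_1+s_2-\tfrac12) L(\pi \times \chi_2^{-1}, s_1-s_2+\tfrac12)$ at $(s_1, s_2)=(-t_1/2, -t_2/2)$, and the resulting ratio factorises naturally as $\cE(\pi, q) \cdot \cE(\pi \times \chi_2^{-1}, r_2+1+r)$. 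The rational prefactor $(1+p^{-1})^{-2}(1-p^{-1})^{-1}$ combines $[\GL_2(\Zp):\Iw_{\GL_2}]^{-2} = (1+p^{-1})^{-2}$ (from the translate by $u_{\Kl}$ relative to each $\GL_2$ factor of $H$) with a $(1-p^{-1})^{-1}$ coming from the Haar-measure normalisation on the diagonal torus. The main obstacle is precisely this last piece of bookkeeping: verifying that the flipped Euler factors come out exactly as stated, without being off by a power of $p$ or by a permutation of the Satake parameters, requires keeping careful track of the asymmetric role played by $\alpha$ in the definition \eqref{eq:wklprime} of $w^{\Kl,\prime}_{\alpha\beta}$.
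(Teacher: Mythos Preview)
Your approach is essentially the same as the paper's: both reduce to Proposition~8.14 of \cite{LPSZ1}, identify the image of $w^{\Kl,\prime}_m$ in $\cW(\tau)$ with the normalised spherical vector $w^{\sph}_\tau$ (via \cref{rem:correctWxi}), and then observe that in the unramified situation the $\epsilon$-factor is trivial and the ratio of $L$-factors collapses to $\cE(\pi,q)\,\cE(\pi\times\chi_2^{-1},r_2+1+r)$. Two small points of phrasing are worth tightening: in your second step, $u_{\Kl}$ does not ``act trivially'' but is precisely what makes the unwinding of the $H$-integral land on the Levi (this is built into Proposition~8.14 itself, which is already stated for $u_{\Kl}\cdot w$); and in your third step, the $\cE$-factors do not arise as the GCD of the torus integral but rather as the ratio of the $\GSp_4$ $L$-factors in the definition of $\widetilde{Z}$ to the $\GL_2\times\GL_2$ factors $L(\tau\times\sigma\times\theta,s_1)^{-1}L(\tau^\vee\times\sigma^\vee,1-s_1)^{-1}$ appearing in Proposition~8.14.
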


    \begin{remark}
     Here we are assuming that the integrand has no pole at the relevant value of $(s_1, s_2)$, which can only happen if $r = (r_1 - r_2 + 1)/2$ and  $\cE(\pi \times \chi_2^{-1}, r_2 + 1 + r)$ vanishes. If the integrand does have a pole then the formula remains valid if interpreted via analytic continuation (either in $r$, or in the Hecke parameters).
    \end{remark}

    \begin{proof}
     This is a special case of Proposition 8.14 of \cite{LPSZ1}. Since we are assuming $\pi$ and the $\chi_i$ to be unramified, the epsilon-factor term in \emph{op.cit.} is 1; and the ratio of $L$-factors gives the two $\cE(\pi, -)$ terms. Moreover, as shown above, our renormalisation of the Klingen test vectors is precisely the one which scales the (non-normalised) $\GL_2$ Whittaker function $W_{\xi}$ of \emph{op.cit.} to its normalised equivalent $w_\tau^{\sph}$.
    \end{proof}

  \subsubsection{Particular cases}
   \label{sect:whittakerintegral}
  We define the following Schwartz functions on $\Qp^2$:
  \begin{itemize}
   \item $\Phi'_{\dep} = \ch( \Zp^\times \times \Zp^\times )$,
   %\item $\Phi'_{\mathrm{ord}} = \ch( \Zp^\times \times \Zp)$,
   \item $\Phi'_{\crit} = \ch( \Zp \times \Zp^\times)$,
  \end{itemize}
  These will correspond to holomorphic Eisenstein series that are respectively $p$-depleted, or critical-slope (hence the notation). We let $\Phi_{?}$ denote the preimage of $\Phi'_{?}$ under the inverse Fourier transform (in the second variable only); these are a little messy to write down explicitly.

  Then we have the following formulae, assuming $n \ge 0$ and $\chi$ unramified:
  \begin{itemize}
   \item $W^{\Phi_{\dep}}\left( \stbt{p^n}{}{}{1}, s \right) = $ 1 if $n = 0$, and zero otherwise.
   %\item $W^{\Phi_{\mathrm{ord}}}\left( \stbt{p^n}{}{}{1}, s \right) = \left(\tfrac{p^{s-1}}{\chi(p)}\right)^n$.
   \item $W^{\Phi_{\crit}}\left( \stbt{p^n}{}{}{1}, s \right) = p^{-ns}$,
  \end{itemize}

  Accordingly, for these data the integral appearing in \cref{prop:whittakerformula} is given by
  \begin{multline*}
   \int_{\Qp^\times} w^{\sph}_\tau(\stbt {x}{}{}1) W^{\Phi_1}(\stbt {x}{}{}1, \tfrac{-(r_1 - q - r)}{2}) W^{\Phi_2}(\stbt {x}{}{}1, \tfrac{-(r_2 - q + r)}{2})\tfrac{\theta(x)}{|x|} \, \mathrm{d}^\times x \\= 
   \begin{cases}
    1, \quad\text{if $(\Phi_1, \Phi_2) = (\Phi_{\dep}, \Phi_{\crit})$, $(\Phi_{\crit}, \Phi_{\dep})$ or $(\Phi_{\dep}, \Phi_{\dep})$;} \\[2mm]
    \left[\left(1 - \frac{\gamma}{p^{1 + q}}\right)\left(1 - \frac{\delta}{p^{1 + q}}\right)\right]^{-1},
    \quad \text{if $(\Phi_1, \Phi_2) = (\Phi_{\crit} , \Phi_{\crit}).$}
   \end{cases}
  \end{multline*}

   \begin{remark}
    The case of $(\Phi_{\dep}, \Phi_{\dep})$ appears already in the computation of the interpolating property of the $p$-adic $L$-function in \cite{LPSZ1}. The case of $(\Phi_{\dep}, \Phi_{\crit})$ will appear in our formula for the syntomic regulator; and the case of $(\Phi_{\crit} , \Phi_{\crit})$ plays a somewhat different role -- it is used in \cref{ssec:testdataatp} to compare the \'etale class for $\Pi$ at prime-to-$p$ level to an auxiliary \'etale class at $\Kl(p)$ level which is easier to study.
   \end{remark}

  \subsection{Twisted zeta integrals}
   \label{sect:twistedzeta}

   To deal with certain ``junk'' terms that arise in the evaluation of the $p$-adic regulator,  we will need to consider a twisted form of the above integrals. We let $\chi_1, \chi_2, \rho$ be smooth characters of $\Qp^\times$ with $\chi_1 \chi_2 = \rho^2 \chi_{\pi}$, and consider the slightly more general integral
   \[ Z(w, \uPhi; s_1, s_2, \uchi, \rho)\coloneqq \int_{(Z_G N_H\backslash H)(\Qp)} w(h) f^{\Phi_1}(h_1; \chi_1, s_1) f^{\Phi_2}(h_2; \chi_2, s_2) \rho(\det h)\ \mathrm{d}h.\]
   This is, of course, an instance of the $\GSp_4$ zeta-integral of \cite[Definition 8.3]{LPSZ1} with $\pi$ replaced by $\pi \times \rho$; but we want to focus on the case where $\pi$ is unramified (as above) but $\rho$ and the $\chi_i$ are not, so it is helpful to also consider it as an instance of the $\GSp_4 \times \GL_2$ zeta-integral of \emph{op.cit.} for $\pi \times \sigma$, where $\sigma$ is taken to be the representation
   \[ \sigma \coloneqq I(|\cdot|^{s_2 - 1/2} \rho, |\cdot|^{1/2 - s_2}\rho \chi_2^{-1})\]
   (subject to an appropriate definition of the Whittaker model of $\sigma$ in the reducible case, as in the footnote to Proposition 8.14 of \emph{op.cit.}). The first interpretation shows that the fractional ideal generated by the values of the renormalised zeta-integral
   \[ \wZ(\dots) = \frac{Z(\dots)}{L(\pi \times \rho, s_1 + s_2 - \tfrac{1}{2}) L(\pi \times \rho \chi_2^{-1}, s_1 - s_2 + \tfrac{1}{2})}\]
   is the unit ideal of $\CC[\ell^{\pm s_1}, \ell^{\pm s_2}]$. The second interpretation shows that if $\Phi_1'(0,0) = 0$, then for $m \gg 0$ we have the special-value formula
   \begin{multline*}
   \wZ(u_{\Kl} \cdot w^{\Kl,\prime}_m, \Phi_1 \times \Phi_2, s_1, s_2; \uchi, \rho) =
   \frac{1}{L(\tau \times \sigma \times \theta, s_1) L(\tau^\vee \times \sigma^\vee, 1-s_1) \epsilon(\tau \times \sigma, s_1)} \\
   \times
   \tfrac{p^3}{(p+1)^2(p-1)}\int_{\Qp^\times} w^{\sph}_\tau(\stbt {x}{}{}1) W^{\Phi_1}(\stbt {x}{}{}1; \chi_1, s_1) W^{\Phi_2}(\stbt {x}{}{}1; \chi_2, s_2)\tfrac{\theta(x)\rho(x)}{|x|} \, \mathrm{d}^\times x.
   \end{multline*}
%   The ratio of $L$ and $\epsilon$ factors can also be written as
%   \[
%    \frac{L(\tau \times \rho, s_1 + s_2 - \tfrac{1}{2})L(\tau \times \rho\chi_2^{-1}, s_1 - s_2 + \tfrac{1}{2})}{L(\tau^\vee \times \rho^{-1}, \tfrac{3}{2}-s_1-s_2)L(\tau^\vee \times \rho^{-1}\chi_2, \tfrac{1}{2}-s_1+s_2)} \cdot \frac{1}{\epsilon(\tau \times \rho, s_1 + s_2 - \tfrac{1}{2}) \epsilon(\tau \times \rho\chi_2^{-1}, s_1 - s_2 + \tfrac{1}{2})}.
%   \]
   In this more general setting, the test functions we shall use are of the form
   \begin{align*}
    \Phi'_{\dep, \mu, \nu}(x, y) &= \ch( \Zp^\times \times \Zp^\times ) \cdot \mu(x) \nu(y),
    & \Phi'_{\crit, \nu}(x,y) &= \ch( \Zp \times \Zp^\times)\cdot \nu(y),\\
    \langle p\rangle^{-1}\varphi \cdot \Phi'_{\crit, \nu} &= \ch(p\Zp \times \Zp^\times) \cdot \nu(y).
   \end{align*}
   for finite-order characters $\mu, \nu$, with $\chi|_{\Zp^\times} = \mu^{-1}\nu$ (taking $\mu$ to be trivial in the case of $\Phi'_{\crit, \nu}$, so this condition becomes simply $\chi|_{\Zp^\times} =\nu$). Note that $\Phi'_{\dep, \mu, \nu}(x, y)$ is the same function considered in \cite[Definition 7.5]{LPSZ1}.

   We have
   \[ W^{\Phi_{\dep, \mu, \nu}}(\stbt{x}{}{}{1}; \chi, s)
    =
   \begin{cases}
   \mu(-x)\nu(-1) & \text{if $x \in \Zp^\times$}\\
    0 & \text{otherwise}
   \end{cases}
   \]
   and
   \[ W^{\Phi_{\crit, \nu}}(\stbt{x}{}{}{1}; \chi, s)
   =
   \begin{cases}
   |x|^s \nu(-1) & \text{if $x \in \Zp$}\\
   0 & \text{otherwise.}
   \end{cases}
   \]
   Thus, for test data $\Phi_{\dep, \mu_1, \nu_1} \times \Phi_{\dep, \mu_2, \nu_2}$, assuming $\mu_1 \nu_1 \mu_2 \nu_2 = 1$ and $\rho = \nu_1 \nu_2$, the torus Whittaker integral is simply the integral of the constant function 1 over $\Zp^\times$, so it is 1. Similarly, for test data of the form $\Phi_{\dep, \mu_1, \nu_1} \times \Phi_{\crit, \nu_2}$ we again obtain that the integral is 1. In particular, the values for ``$\dep \times \dep$'' and ``$\dep \times \crit$'' test data are equal.

   On the other hand, the Whittaker function of $\langle p\rangle^{-1}\varphi\Phi_{\crit, \nu}$ is zero at $\stbt x 0 0 1$ unless $x\in p\Zp$, so if we consider test data of the form $\langle p\rangle^{-1}\varphi \cdot \Phi_{\crit, \nu_1} \times \Phi_{\dep, \mu_2, \nu_2}$, then we are integrating the product of a function supported on $p\Zp$ and another supported on $\Zp^\times$. Hence the zeta integral is 0.

\section{Formulating a reciprocity law}

 We now formulate a precise relation between the canonical class $z^{[\Pi, q, r]}_{\can}(\uchi)$ and values of $p$-adic $L$-functions. We continue with the same notation and assumptions on $\Pi$ as in the previous section.

 \subsection{Hecke parameters at $p$}
  \label{sect:heckeparams}

  Assume $\Pi$ and the characters $\chi_i$ are unramified at $p$, and write $w \coloneqq r_1 + r_2 + 3$.

  \begin{definition} \
   \begin{itemize}
    \item We define the \emph{Hecke polynomial} of $\Pi'$ at $p$ to be the degree 4 polynomial $P_p(X)$ such that
    \[ L(\Pi_p', s - \tfrac{3}{2}) = L(\Pi_p, s - \tfrac{r_1 + r_2 + 3}{2}) = P_p(p^{-s})^{-1}. \]
    \item The \emph{Hecke parameters} of $\Pi'$ at $p$ are the complex numbers $\alpha, \beta, \gamma, \delta$ such that
    \[
     P_p(X) = (1 - \alpha X) (1 - \beta X) (1 - \gamma X) (1 - \delta X),\qquad
     \alpha \delta = \beta \gamma = p^{(r_1 + r_2 + 3)} \chi_{\Pi}(p).
    \]
   \end{itemize}
  \end{definition}

  If $E$ is any number field over which $\Pif$ is definable, then the coefficients of $P_p(X)$ lie in $\cO_E$; the Hecke parameters are algebraic integers in $\bar{E}$, and are well-defined up to the action of the Weyl group. Extending $E$ if necessary, we may assume that they lie in $\cO_E$ itself. All of the Hecke parameters have complex absolute value $p^{(r_1 + r_2 + 3)/2}$ (see \cite[Theorem 1]{weissauer05}).

  \begin{note}
   Our notations here for Hecke polynomials and Hecke parameters are consistent with the notations of \cite{LSZ17} (see Theorem 10.1.3 of \emph{op.cit.}~in particular). It is also consistent with \S 10 of \cite{LPSZ1}, where the main theorems of that paper are given. Note, however, that the Hecke parameters here are \emph{not} the same as the $(\alpha, \beta, \gamma, \delta)$ in \cite{LPSZ1} Proposition 3.2, which are the Hecke parameters of a different twist of $\Pi_p$. We apologise to readers of \cite{LPSZ1} for shifting normalisations in the middle of the paper.
  \end{note}

  We shall fix an embedding $E \into L \subset \QQbar_p$, where $L$ is a finite extension of $\Qp$, and let $v_p$ be the valuation on $L$ such that $v_p(p) = 1$. If we order $(\alpha, \beta, \gamma, \delta)$ in such a way that $v_p(\alpha) \le \dots \le v_p(\delta)$ (which is always possible using the action of the Weyl group), then we have the valuation estimates
  \begin{equation}
   \label{eq:valuations}
   v_p(\alpha) \ge 0,\qquad v_p(\alpha \beta) \ge r_2 + 1.
  \end{equation}

  \begin{remark}\label{rem:Dcrisevals}
   These inequalities correspond to the fact that the Newton polygon of the $p$-adic Galois representation associated to $\Pi$ lies on or above the Hodge polygon; see \cref{prop:Dcrisevals} below.
  \end{remark}

  \begin{definition}
   We say $\Pi$ is \emph{Siegel ordinary} at $p$ if $v_p(\alpha) = 0$; \emph{Klingen ordinary} at $p$ if $v_p(\alpha\beta) = r_2 + 1$; and \emph{Borel ordinary} if it is both Siegel and Klingen ordinary.
  \end{definition}

 \subsection{Exponential maps and regulators}

  \begin{convention}
  	The representation $\Qp(1)$ of $\Gal(\QQbar_p / \Qp)$ has Hodge--Tate weight $1$, and crystalline Frobenius $\varphi$ acts on $\Dcris(\Qp(1))$ as multiplication by $1/p$.
  \end{convention}

  Recall the following result (which follows from Theorem 1 of \cite{urban05}, together with the multiplicity-one results of Arthur and Gee--Ta\"ibi cited above):

  \begin{proposition}
   \label{prop:Dcrisevals}
   The representation $V_{\Pi} |_{G_{\Qp}}$ is crystalline. The eigenvalues of $\varphi$ on $\Dcris(V_{\Pi})$ are the Hecke parameters $\{ \alpha, \beta, \gamma, \delta\}$ of \cref{sect:heckeparams}, and its Hodge--Tate weights are $\{0, -r_2-1, -r_1-2, -r_1-r_2-3\}$.\qed
  \end{proposition}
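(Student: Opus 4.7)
The proposition is really a citation of results in the literature, so the plan is to identify the relevant ingredients and check that normalisations match up.

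First, to establish crystallinity: since $\Pi$ is unramified at $p$, there exists an open compact $U \subset G(\Af)$ with $U_p = G(\Zp)$ such that $\Pif^U \ne 0$. For such $U$ the Shimura variety $Y_G(U)$ has good reduction at $p$, and the local system $\cV$ extends canonically to the integral model (being a direct summand of a tensor power of the relative Tate module of the universal abelian surface, which extends to the semi-abelian scheme on a toroidal compactification). Applying the $p$-adic comparison theorem in the compactly-supported setting (via Faltings' construction of toroidal compactifications together with Fontaine--Messing / Tsuji) yields crystallinity of $H^3_{\et,c}(Y_G(U)_{\QQbar}, \cV)$. Since $V_\Pi$ is cut out as a Hecke-isotypic summand (using operators at primes away from $p$), crystallinity passes to $V_\Pi$.

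Second, to identify the Frobenius eigenvalues: the crystalline comparison identifies $\varphi$ on $\Dcris(V_\Pi|_{G_{\Qp}})$ with the action of geometric Frobenius on the corresponding piece of crystalline cohomology of the special fibre. The Eichler--Shimura-type relation for $\GSp_4$ (due to Laumon, building on Chai--Faltings) shows that the characteristic polynomial of Frobenius at $p$ on this Hecke-isotypic summand is the local Hecke polynomial $P_p(X)$ of $\Pi'$ at $p$. That this polynomial is precisely the degree-four polynomial attached to the spin representation of the Langlands dual (rather than something arising from an endoscopic contribution) is where Urban's Theorem~1 of \cite{urban05}, together with the multiplicity-one results of Arthur and Gee--Ta\"ibi, is needed: these pin down that the $\Pif'$-isotypic component of cohomology indeed carries the expected 4-dimensional spin Galois representation, and not an endoscopic lift from $\GL_2 \times \GL_2$ or similar.

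Third, for the Hodge--Tate weights: Faltings' BGG-style spectral sequence identifies the graded pieces of the Hodge filtration on $\DdR(V_\Pi)$ with coherent cohomology of automorphic vector bundles on $Y_G$ coming from the four weights in the BGG resolution of $V(r_1, r_2; r_1+r_2)$. For $\GSp_4$ these four Weyl translates produce jumps in the Hodge filtration at the integers $\{0,\, r_2+1,\, r_1+2,\, r_1+r_2+3\}$; with the paper's convention that $\Qp(1)$ has Hodge--Tate weight $+1$ (so geometric cohomology has non-positive HT weights), the HT weights of $V_\Pi$ are the negatives of these, as stated.

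The only real difficulty is bookkeeping: one must track (i) the distinction between $\Pi$ and the unitary-normalised twist $\Pi' = \Pi \otimes \|\cdot\|^{-(r_1+r_2)}$, which shifts the powers of $p$ in the Hecke parameters; (ii) arithmetic versus geometric Frobenius; and (iii) the sign convention for Hodge--Tate weights. Once these are fixed to be consistent with \cite{LSZ17,LPSZ1} (so that $\alpha\delta = \beta\gamma = p^{r_1+r_2+3}\chi_\Pi(p)$ and $V_\Pi$ has HT weights that are non-positive), the asserted values drop out. No genuinely new argument is required.
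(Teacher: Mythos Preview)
Your proposal is correct and takes essentially the same approach as the paper: the paper's entire proof is the parenthetical sentence preceding the proposition, namely that the result ``follows from Theorem~1 of \cite{urban05}, together with the multiplicity-one results of Arthur and Gee--Ta\"ibi cited above'', and the statement is then marked with a \texttt{\textbackslash qed}. Your write-up simply unpacks the ingredients that go into Urban's theorem (crystalline comparison, congruence relations, BGG for the Hodge filtration) and checks the normalisations, which is a reasonable elaboration but not a different argument.
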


  \begin{lemma}\label{lemma:h1g}
   For all $0 \le q \le r_2$, we have the following:
   \begin{enumerate}
    \item[(a)] The operators $1 - \varphi$ and $1 - p\varphi$ are bijective on $\Dcris(V_{\Pi}^*(-q))$.
    \item[(b)] The Bloch--Kato $H^1_{\mathrm{e}}$, $H^1_{\mathrm{f}}$ and $H^1_{\mathrm{g}}$ subspaces of $H^1(\Qp, V_{\Pi}^*(-q))$ coincide.
    \item[(c)] The Bloch--Kato exponential map
    \[   \exp: \frac{\DdR(V_{\Pi}^*)}{\Fil^{-q} \DdR(V_{\Pi}^*)} \to H^1_{\mathrm{e}}(\Qp, V_{\Pi}^*(-q))\]
    is an isomorphism.
   \end{enumerate}
  \end{lemma}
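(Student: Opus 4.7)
All three parts rest on a single eigenvalue estimate for Frobenius. The plan is to prove (a) using the temperedness bound, then to deduce (b) from Bloch--Kato plus local duality and (c) from the fundamental exact sequence.

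For (a), I would combine \cref{prop:Dcrisevals} with Weissauer's bound $|\alpha|=\cdots=|\delta|=p^{(r_1+r_2+3)/2}$ cited at the start of \S\ref{sect:heckeparams} to conclude that $\varphi$ acts on $\Dcris(V_{\Pi}^*(-q))$ with four eigenvalues of complex absolute value $p^{q-(r_1+r_2+3)/2}$. Since $q\le r_2$ and $r_1\ge r_2\ge 0$, one has $2(q+1) \le 2r_2 + 2 \le r_1 + r_2 + 2 < r_1 + r_2 + 3$, so both $q$ and $q+1$ are strictly less than $(r_1+r_2+3)/2$; hence all eigenvalues of $\varphi$ and of $p\varphi$ lie inside the open unit disc, so neither $1-\varphi$ nor $1-p\varphi$ has $0$ in its spectrum.

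For (b), I would use the Bloch--Kato fundamental diagram, which identifies $H^1_{\mathrm f}(V)/H^1_{\mathrm e}(V)$ with the cokernel of $1-\varphi$ on $\Dcris(V)$; this vanishes for $V = V_{\Pi}^*(-q)$ by (a). For the equality $H^1_{\mathrm g} = H^1_{\mathrm f}$, the plan is to invoke local Tate duality: $H^1_{\mathrm f}$ is self-annihilating under the pairing $H^1(V) \times H^1(V^*(1)) \to \Qp$, and $H^1_{\mathrm e}(V)^\perp = H^1_{\mathrm g}(V^*(1))$, so $H^1_{\mathrm g}(V)/H^1_{\mathrm f}(V)$ is dual to $H^1_{\mathrm f}(V^*(1))/H^1_{\mathrm e}(V^*(1))$. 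For $V^*(1) = V_\Pi(q+1)$, the $\varphi$-eigenvalues on $\Dcris$ have absolute value $p^{(r_1+r_2+1)/2-q} \ge p^{(r_1-r_2+1)/2} \ge p^{1/2}$, so $1-\varphi$ is again invertible and the quotient vanishes, closing this case.

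For (c), the Bloch--Kato exponential is the connecting map in the long exact sequence associated to $0 \to \Qp \to B_{\mathrm{cris}}^{\varphi=1} \to B_{\mathrm{dR}}/B_{\mathrm{dR}}^+ \to 0$, namely
\[ 0 \to H^0(\Qp,V) \to \Dcris(V)^{\varphi=1} \to \DdR(V)/\Fil^0\DdR(V) \xrightarrow{\exp} H^1_{\mathrm e}(\Qp,V) \to 0. \]
Invertibility of $1-\varphi$ from (a) forces $\Dcris(V)^{\varphi=1} = 0$, and therefore $\exp$ becomes an isomorphism. The final cosmetic step is to apply the shift rule $\Fil^0 \DdR(V_{\Pi}^*(-q)) = \Fil^{-q}\DdR(V_{\Pi}^*)$ for Tate twists, which puts the source in the form displayed in the lemma. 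The main obstacle is really just careful bookkeeping: keeping track of which representation ($V$, $V^*$, $V(-q)$, or $V^*(1)$) each Bloch--Kato statement applies to. The arithmetic input sits entirely in the eigenvalue estimate of (a).
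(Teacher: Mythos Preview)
Your proposal is correct and follows essentially the same approach as the paper: the key arithmetic input in both is the Weil-number bound $|\alpha|=\cdots=|\delta|=p^{(r_1+r_2+3)/2}$, which forces the $\varphi$-eigenvalues on $\Dcris(V_\Pi^*(-q))$ to avoid $1$ and $p^{-1}$. The paper simply cites \cite[\S 3]{blochkato90} for the implication (a) $\Rightarrow$ (b), (c), whereas you unpack this via the fundamental exact sequence and local Tate duality; your detour through $V_\Pi(q+1)$ for the $H^1_{\mathrm g}/H^1_{\mathrm f}$ step is equivalent to (and slightly more explicit than) observing that bijectivity of $1-p\varphi$ on $\Dcris(V)$ is the same condition as bijectivity of $1-\varphi$ on $\Dcris(V^*(1))$.
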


  \begin{proof}
   It is well known (see e.g. \cite[\S 3]{blochkato90}) that (a) implies (b) and (c). The assertion (a) amounts to claiming that
   \[ \{ \alpha, \beta, \gamma, \delta\} \cap \{ 1, p, \dots, p^{r_2 + 1}\} = \varnothing.\]
   However, all elements in the first set have Archimedean absolute value $p^{(r_1 + r_2 + 3)/2}$, and since $r_1 \ge r_2$, we have $(r_1 + r_2 + 3)/2 > r_2 + 1$.
  \end{proof}

  Since the localisation at $p$ of the class $ z^{[\Pi, q, r]}_{\can}$ is in $H^1_{\mathrm{g}}$ (by \cite[Theorem B]{nekovarniziol16}), it is also in $H^1_{\mathrm{e}}$. Letting $\log$ denote the inverse of the Bloch--Kato exponential, we may define
  \[ \log\left(  z^{[\Pi, q, r]}_{\can} \right) \in \frac{\DdR(V_{\Pi}^*)}{\Fil^{-q} \DdR(V_{\Pi}^*)} = \left(\Fil^1 \DdR(V_{\Pi}) \right)^*.\]
  Note that the target of this map is 3-dimensional (and independent of $q$ in this range).

  \begin{assumption}
   We assume henceforth that $\Pi$ is Klingen-ordinary at $p$.
  \end{assumption}

  \begin{lemma}
   \label{lem:trivzero}
   If $\Pi$ is Klingen-ordinary at $p$, then none of $(\alpha, \beta, \gamma, \delta)$ has the form $p^n \zeta$ with $n \in \ZZ$ and $\zeta$ a root of unity. (In other words, Assumption 11.1.1 of \cite{LSZ17} is satisfied.)
  \end{lemma}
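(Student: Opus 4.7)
The plan is to argue by contradiction. Suppose some $\lambda \in \{\alpha, \beta, \gamma, \delta\}$ equals $p^n \zeta$ with $n \in \ZZ$ and $\zeta$ a root of unity in $\overline{\QQ}_p$. Since a root of unity has complex absolute value $1$ and $p$-adic valuation $0$, we deduce simultaneously that $v_p(\lambda) = n$ and $|\lambda|_\infty = p^n$. The Weissauer/Ramanujan bound recorded in \cref{sect:heckeparams} gives $|\lambda|_\infty = p^{(r_1 + r_2 + 3)/2}$, so necessarily $n = (r_1 + r_2 + 3)/2$. Hence the task reduces to showing that Klingen-ordinarity forbids any Hecke parameter from having $p$-adic valuation exactly $(r_1 + r_2 + 3)/2$.

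To see this, I would express all four valuations as explicit affine functions of $v_p(\alpha)$. Since $\chi_\Pi$ is a finite-order Dirichlet character unramified at $p$, $\chi_\Pi(p)$ is a root of unity and contributes $0$ to the $p$-adic valuation. Combining the relations $\alpha\delta = \beta\gamma = p^{r_1+r_2+3}\chi_\Pi(p)$ with the Klingen-ordinarity assumption $v_p(\alpha\beta) = r_2+1$ yields
\[ v_p(\beta) = r_2+1-v_p(\alpha), \qquad v_p(\gamma) = r_1+2+v_p(\alpha), \qquad v_p(\delta) = r_1+r_2+3-v_p(\alpha). \]
The ordering convention of \eqref{eq:valuations} together with $r_1 \geq r_2 \geq 0$ then confines $v_p(\alpha)$ to the interval $[0, (r_2+1)/2]$.

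Finally I would run through the four candidates for $\lambda$: in each of the cases $\lambda = \alpha, \beta, \gamma, \delta$, the equation $v_p(\lambda) = (r_1+r_2+3)/2$ collapses to $v_p(\alpha) = (r_1+r_2+3)/2$ or $v_p(\alpha) = (r_2-r_1-1)/2$, both of which lie strictly outside the interval $[0,(r_2+1)/2]$ under the assumption $r_1 \geq r_2 \geq 0$. None of this is a serious obstacle; the only bookkeeping subtlety is the Weyl-group ambiguity in labelling $(\alpha,\beta,\gamma,\delta)$, which is precisely eliminated by the ordering fixed in \eqref{eq:valuations} placing $\alpha$ at the smallest valuation before invoking Klingen-ordinarity.
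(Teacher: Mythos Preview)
Your proof is correct and follows essentially the same argument as the paper: both reduce to showing that no Hecke parameter can have $p$-adic valuation $(r_1+r_2+3)/2$, using the complex absolute value to pin down $n$ and then using Klingen-ordinarity together with $\alpha\delta=\beta\gamma=p^{r_1+r_2+3}\chi_\Pi(p)$ to rule this out. The only cosmetic difference is the final arithmetic: the paper bounds $v_p(\alpha),v_p(\beta)\le r_2+1\le\tfrac{w-1}{2}$ and $v_p(\gamma),v_p(\delta)\ge r_1+2\ge\tfrac{w+1}{2}$ directly, whereas you parametrise all four valuations by $v_p(\alpha)$ and solve case-by-case.
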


  \begin{proof}
   Since all of the Hecke parameters are Weil numbers of weight $w = r_1 + r_2 + 3$, it follows that if one of the parameters has this form, then $w$ must be even and $n = w/2$. In particular, this parameter has $p$-adic valuation $w/2$. However, if $\Pi$ is Klingen-ordinary then $\alpha, \beta$ have valuations at most  $r_2 + 1 \le \tfrac{w-1}{2}$, and $\gamma, \delta$ have valuations at least $r_1 + 2 \ge \tfrac{w+1}{2}$, so none can have valuation $w/2$.
  \end{proof}

  In this Klingen-ordinary setting there is a distinguished pair $(\alpha, \beta)$ of Hecke parameters of minimal valuation, and hence a distinguished 2-dimensional subspace
  \begin{equation} \label{eq:defcurlyP}
   \Dcris(V_{\Pi})^{\cQ(\varphi) = 0}, \qquad \cQ(t) = (1 - \tfrac{t}{\alpha})(1 - \tfrac{t}{\beta}).
  \end{equation}

  \begin{note}\label{note:1diml}
   From weak admissibility, we see that $\Dcris(V_{\Pi})^{\cQ(\varphi) = 0} \cap \Fil^1$ must have dimension exactly 1, and that it surjects onto the 1-dimensional graded piece $\Fil^1 / \Fil^{r_2 + 2}$.
  \end{note}

  \begin{definition}
   \label{def:nudR}
   Let $\nu$ be a basis of the 1-dimensional $L$-vector space $\Gr^{r_2 + 1} \DdR(V_{\Pi})$, and let $\nu_{\dR}$ denote its unique lifting to $\Dcris(V_{\Pi})^{\cQ(\varphi) = 0} \cap \Fil^{r_2 + 1}$.
  \end{definition}

  We can now formulate the key problem treated in this paper and its sequel:
  \bigskip
  \begin{mdframed}
   \textbf{Problem}: Compute the quantity
   \begin{equation}
    \label{eq:goal0}
    \operatorname{Reg}_{\nu, \can}^{[\Pi, q, r]}(\uchi) \coloneqq \left\langle \nu_{\dR}, \log\left( z^{[\Pi, q, r]}_{\can}(\uchi)\right)\right\rangle_{\DdR(V_{\Pi})} \in L.
   \end{equation}
  \end{mdframed}
  \bigskip

  By definition, for any $(w, \uPhi)$ with $\uPhi$ in the $\uchi^{-1}$-eigenspace, we have
  \[ \left\langle \nu_{\dR}, \log\left( z^{[\Pi, q, r]}(w, \uPhi) \right)\right\rangle_{\DdR(V_{\Pi})} = \wZ(w, \uPhi) \cdot \operatorname{Reg}_{\nu, \can}^{[\Pi, q, r]}(\uchi). \]
  So it suffices to evaluate the quantity
  \[ \operatorname{Reg}_{\nu}^{[\Pi, q, r]}(w, \uPhi) \coloneqq \left\langle \nu_{\dR}, \log\left( z^{[\Pi, q, r]}(w, \uPhi) \right)\right\rangle_{\DdR(V_{\Pi})}\]
  for a single $(w, \uPhi)$ with $\wZ(w, \uPhi) \ne 0$.
%
%  \begin{remark}
%   If the hypotheses of Theorem \ref{thm:equivariance} hold (for $\uchi = (\chi_\Pi, 1)$), we can formulate this independently of $(w, \uPhi)$; if we define $\operatorname{Reg}_{\nu, \can}^{[\Pi, q, r]} \coloneqq \left\langle \nu_{\dR}, \log\left( z^{[\Pi, q, r]}_{\can} \right)\right\rangle_{\Dcris(V_{\Pi})}$, then we have
%   \[ \operatorname{Reg}_{\nu}^{[\Pi, q, r]}(w, \uPhi) = \wZ(w, \uPhi) \cdot \operatorname{Reg}_{\nu, \can}^{[\Pi, q, r]} \quad\forall\  (w, \uPhi),\]
%   so it suffices to determine the single constant $\operatorname{Reg}_{\nu, \can}^{[\Pi, q, r]}$.
%  \end{remark}

 %%%%%%%%%%%%%%%%%%%%%%%%%%

 \subsection{Periods and $p$-adic $L$-functions}
 \label{sect:periods}

  We shall relate the regulator $\operatorname{Reg}_{\nu}^{[\Pi, q, r]}(\uchi)$ to the $p$-adic $L$-functions of \cite{LPSZ1}; so let us briefly recall the construction of \emph{op.cit.} (and slightly refine it by paying closer attention to the periods involved).

  \subsubsection*{P-adic periods} Recall that we have chosen a basis vector $\nu$ of the 1-dimensional $L$-vector space $\frac{\Fil^1 \DdR(V_{\Pi})}{\Fil^{r_2 + 2} \DdR(V_{\Pi})}$. This space is canonically the base-extension to $L$ of an $E$-vector space, namely
  \[ \Hom_{E[G(\Af)]}\Big( \cW(\Pif'), H^2(\Pif)_E\Big),\]
  where $H^2(\Pif)_E$ denotes the unique copy of $\Pif'$ inside a coherent $H^2$ of a toroidal compactification of $Y_{G, E}$, as in \cite[\S 5.2]{LPSZ1}.

  \begin{definition}
   Let $\nu^{\alg}$ be an $E$-basis of $\Hom_{E[G(\Af)]}\Big( \cW(\Pif'), H^2(\Pif)_E\Big)$, and let $\Omega_p(\Pi, \nu, \nu^{\alg}) \in L^\times$ be the scalar such that we have
   \[ \nu = \Omega_p(\Pi, \nu, \nu^{\alg}) \cdot \nu^{\alg}.\]
  \end{definition}

  \begin{remark}
   We could, of course, suppose $\nu$ to be $E$-rational, so that we could take $\nu^{\alg} = \nu$ and $\Omega_p(\Pi, \nu, \nu^{\alg}) = 1$; but it is convenient to allow more general $\nu$ in order to allow variation in $p$-adic families.
  \end{remark}

  \subsubsection*{Archimedean periods} Let $K_\infty \subseteq G(\mathbf{R})$ denote the standard maximal connected compact-mod-centre subgroup $\mathbf{R}^\times U_2(\mathbf{R})$. As in \cite[\S 2.0]{harris90b}, the standard Shimura datum for $\GSp_4$ determines a parabolic subalgebra $\mathfrak{p} \subseteq \operatorname{Lie} G$ with Levi subalgebra $\operatorname{Lie}K_\infty$. %Let $\cL_2$ be the irreducible automorphic vector bundle to whose cohomology $\Pif'$ contributes in degree 2 (corresponding to the representation $L_2$ of the Siegel Levi $M_{\Sieg}$ of highest weight $(r_2 + 2, -r_1; r_1 + r_2)$).

  Theorem 5.2.3 of \cite{harris90b} gives an isomorphism between the $(\mathfrak{p}, K_\infty)$-cohomology of the space of cusp forms on $G(\QQ) \backslash G(\AA)$ and a certain space of of harmonic differential forms, which injects into the interior part of coherent cohomology of the toroidal compactification (Proposition 3.6 of \emph{op.cit.}). In our situation this gives a $G(\Af)$-equivariant injection
  \[ H^2(\mathfrak{p}, K_\infty; \Pi' \otimes V_\sigma) \into H^2(\Pif)_{\CC} \]
  for a suitable algebraic representation $V_\sigma$ depending on $(r_1, r_2)$. As in Proposition 4.5 of op.cit., the $(\mathfrak{p}, K_\infty)$-cohomology can be computed rather more simply as
  \[ H^2(\mathfrak{p}, K_\infty; \Pi' \otimes V_\sigma) \cong \Hom_{K_\infty}(\tau, \Pi') \]
  where $\tau$ is the minimal $K_\infty$-type\footnote{This notion is most familiar for holomorphic discrete series, but can be extended to the non-holomorphic discrete series representations considered here; see \cite{moriyama04} for further details.} of $\Pi_\infty'$ (whose highest weight is the same as that of $V_\sigma^\vee \otimes \bigwedge^2(\mathfrak{p}^-)$). See \S 2.2 of \cite{harriskudla92} for explicit formulae in the $\GSp_4$ case. Since $\tau$ appears in $\Pi'_\infty$ with multiplicity one, both $\Hom_{K_\infty}(\tau, \Pi')$ and $H^2(\Pif)_{\CC}$ are isomorphic to $\Pif'$, which is irreducible; so the above injection must be a bijection, giving a canonical isomorphism
  \[ H^2(\Pif)_{\CC} \cong \Hom_{K_\infty}(\tau, \Pi').\]

  \begin{remark}
   See Theorem 1.2 of \cite{harriskudla92} for a somewhat stronger statement under an additional regularity hypothesis on the weight, showing that the subspaces $H^2(\Pif)$, as $\Pi$ varies over cuspidal automorphic representations with a given discrete-series Archimedean component, span the whole of the interior cohomology of the toroidal compactification. Without this additional regularity assumption, there might be some non-tempered automorphic representations which also contribute to the coherent cohomology; but (from Arthur's classification) these cannot contribute to the $\Pif'$-eigenspace.
  \end{remark}

  We also have a canonical isomorphism $\Pi' \cong \cW(\Pi')$, the global Whittaker transform, mapping $\phi \in \Pi$ to the Whittaker function given by
  \[ g \mapsto \int_{N(\QQ) \backslash N(\AA)} \phi(ng) \psi_N(n)^{-1} \mathrm{d}n \]
  where $N$ is the unipotent radical of $B$. Composing this with the comparison isomorphism above, we obtain an isomorphism
  \[  H^2(\Pif)_\CC \xrightarrow{\ \cong\ } \Hom_{K_\infty}(\tau, \cW(\Pi')).\]

  In \cite[\S 10.2]{LPSZ1}, we define an element $w_\infty \in \Hom_{K_\infty}(\tau, \cW(\Pi_{\infty}'))$, mapping the standard basis vectors of $\tau$ to certain explicit Whittaker functions in $\cW(\Pi_\infty')$ constructed by Moriyama in \cite{moriyama04} (normalised such that the associated zeta-integral is exactly equal to the Archimedean $L$-factor).

  \begin{proposition}
   There is a constant $\Omega_\infty(\Pi, \nu^{\alg}) \in \CC^\times$ such that the composite map
   \[ \cW(\Pif') \xrightarrow{\ \nu^{\alg}\ } H^2(\Pif)_{\CC} \longrightarrow \Hom_{K_\infty}(\tau, \Pi') \longrightarrow \Hom_{K_\infty}(\tau, \cW(\Pi')) \]
   maps $w_\mathrm{f} \in \cW(\Pif')$ to $\Omega_\infty(\Pi, \nu^{\alg})^{-1} \cdot w_{\mathrm{f}} \otimes w_\infty$.
  \end{proposition}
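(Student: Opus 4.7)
The plan is to realize this as a Schur's lemma statement: each arrow in the composite is $G(\Af)$-equivariant, and both source and target are abstractly isomorphic as admissible $G(\Af)$-representations to the irreducible $\Pif'$, so the composite must be a scalar multiple of a canonical map. First I would verify the equivariance of each arrow. The map $\nu^{\alg}$ is $G(\Af)$-equivariant by its very definition as an element of $\Hom_{E[G(\Af)]}(\cW(\Pif'), H^2(\Pif)_E)$, together with the canonical identification of function spaces $\cW(\Pif) = \cW(\Pif')$. The Harris comparison $H^2(\Pif)_\CC \cong \Hom_{K_\infty}(\tau, \Pi')$ of \cite{harris90b} is $G(\Af)$-equivariant since $K_\infty$ and $G(\Af)$ commute in $G(\AA)$. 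Finally, the last arrow is post-composition with the global Whittaker transform $\Pi' \xrightarrow{\sim} \cW(\Pi')$, which is $G(\AA)$-equivariant (this is where global genericity of $\Pi'$ is used).

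Next I would decompose the target. The factorisation of the global Whittaker model as a restricted tensor product gives, on $K_\infty$-finite vectors, an isomorphism
\[ \cW(\Pi')_{K_\infty\text{-fin}} = \cW(\Pif') \otimes \cW(\Pi_\infty')_{K_\infty\text{-fin}} \]
as a $(\mathfrak{g}_\infty, K_\infty) \times G(\Af)$-module. Since $G(\Af)$ commutes with $K_\infty$, applying $\Hom_{K_\infty}(\tau, -)$ yields
\[ \Hom_{K_\infty}(\tau, \cW(\Pi')) = \cW(\Pif') \otimes \Hom_{K_\infty}(\tau, \cW(\Pi_\infty')). \]
Because $\tau$ is the minimal $K_\infty$-type of the (essentially) discrete series $\Pi_\infty'$ and so occurs with multiplicity one, the second tensor factor is one-dimensional and spanned precisely by Moriyama's vector $w_\infty$. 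Under this identification, the composite in the statement becomes a $G(\Af)$-equivariant map $\cW(\Pif') \to \cW(\Pif') \otimes \CC\cdot w_\infty$, and Schur's lemma (for the irreducible admissible representation $\Pif'$) forces it to be a scalar multiple of $w \mapsto w \otimes w_\infty$. I define $\Omega_\infty(\Pi, \nu^{\alg})^{-1}$ to be this scalar. It is nonzero because $\nu^{\alg}$ is a basis of a one-dimensional Hom-space (hence nonzero on every nonzero vector, since $\Pif'$ is irreducible), and each subsequent arrow is injective on the irreducible image of $\nu^{\alg}$.

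The main point requiring care, though not really an obstacle, is the bookkeeping of identifications. One must pin down the implicit isomorphism $\cW(\Pif) = \cW(\Pif')$ (differing by the twist $\|\cdot\|^{-(r_1+r_2)}$) so that the statement ``$w_{\mathrm{f}}$ is sent to $w_{\mathrm{f}} \otimes w_\infty$'' is meaningful, and one must verify the compatibility of the restricted-tensor-product decomposition with the specific normalisation of $w_\infty$ used in \cite[\S 10.2]{LPSZ1}. Both are routine once the conventions are fixed, and neither affects the existence of the scalar $\Omega_\infty(\Pi,\nu^{\alg})$ claimed by the proposition.
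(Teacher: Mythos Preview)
Your proposal is correct and follows essentially the same approach as the paper: the paper's proof simply observes that all the maps in the composite are $G(\Af)$-equivariant bijections between representations isomorphic to the irreducible $\Pif'$, and concludes by Schur's lemma that the composite is a nonzero scalar. Your version spells out the equivariance of each arrow and the tensor-product decomposition of the target more carefully, but the underlying argument is identical.
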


  \begin{proof} Since all of these maps are $G(\Af)$-equivariant bijections between representations isomorphic to $\Pif'$, and $\Pif'$ is irreducible, it is clear that the composite is multiplication by a nonzero scalar.\end{proof}

  \begin{remark}
   The quantities $\Omega_p(\Pi, \nu, \nu^{\alg})$ and $\Omega_\infty(\Pi, \nu^{\alg})$ each depend on the choice of $\nu^{\mathrm{alg}}$, but the ratio
   \[ \Omega_p(\Pi, \nu, \nu^{\alg})^{-1} \otimes \Omega_\infty(\Pi, \nu^{\alg}) \in L \otimes_{E} \CC\] depends only on $\nu$; the dependency on $\nu^{\alg}$ cancels out.
  \end{remark}

  \subsubsection*{The $p$-adic $L$-function}

  \begin{theorem}
   \label{thm:padicLfcn}
   There exists a $p$-adic measure $\cL_{p, \nu}(\Pi, \uchi) \in \Lambda_L(\Zp^\times \times \Zp^\times)$ whose evaluation at $(a_1 + \rho_1, a_2 + \rho_2)$, for $a_i$ integers with $0 \le a_1, a_2 \le r_1 - r_2$ and $\rho_i$ finite-order characters such that we have $ (-1)^{a_1 + a_2} \rho_1(-1) \rho_2(-1) = -\chi_2(-1)$, satisfies
   \begin{multline*}
    \frac{\cL_{p, \nu}(\Pi, \uchi; a_1 + \rho_1, a_2 + \rho_2)}{\Omega_p(\Pi, \nu, \nu^{\alg})}
    =\\ R_p(\Pi, \rho_1, a_1) R_p(\Pi \times \chi_2^{-1}, \rho_2, a_2) \cdot
    \frac{\Lambda(\Pi \times \rho_1^{-1}, \tfrac{1-r_1+r_2}{2} + a_1)\Lambda(\Pi \times \chi_2^{-1}\rho_2^{-1}, \tfrac{1-r_1+r_2}{2} + a_2)}{\Omega_{\infty}(\Pi, \nu^{\alg})},
   \end{multline*}
   for some (and hence every) choice of $\nu^{\alg}$ as above.
  \end{theorem}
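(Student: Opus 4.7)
The proof follows the construction of Proposition 10.4 of \cite{LPSZ1}, refined to track explicitly the period $\Omega_p(\Pi, \nu, \nu^{\alg})$. The plan is to construct $\cL_{p,\nu}(\Pi, \uchi)$ as a bounded measure on $\Zp^\times \times \Zp^\times$ by pairing a $p$-adic family of Klingen-ordinary coherent cohomology classes for $G$ with a family of pushforwards of $\GL_2 \times \GL_2$ Eisenstein classes varying over the weight space; then to identify the value at classical points using the explicit zeta-integral calculations of \S \ref{sect:twistedzeta}.

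First, one builds the $p$-adic family. Klingen-ordinarity implies that the $U_{2,\Kl}'$-eigenvalue $\alpha\beta/p^{r_2+1}$ is a $p$-adic unit, so Hida's ordinary projector is defined on the coherent cohomology of the Klingen tower and cuts out the one-dimensional eigensystem generated by $\nu^{\alg} \otimes w^{\Kl,\prime}_n$ (with the trace-compatible family $\{w^{\Kl,\prime}_n\}$ given by \eqref{eq:wklprime}). Pairing this family against the pushforward to $Y_G$ of the $p$-adic family of $H$-Eisenstein classes indexed by test data of the form $\Phi_{\dep,\mu_1,\nu_1} \otimes \Phi_{\dep,\mu_2,\nu_2}$ produces a bounded measure on $\Zp^\times \times \Zp^\times$, taking values in $L$ after normalising by $\Omega_p(\Pi, \nu, \nu^{\alg})^{-1}$. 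At a classical point $(a_1 + \rho_1, a_2 + \rho_2)$ this measure evaluates to a pairing in coherent cohomology.

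Second, one unwinds this classical value via the Harris--Kudla isomorphism of \S \ref{sect:periods}. Using Moriyama's Whittaker function $w_\infty$ (whose normalisation is chosen so that its Archimedean zeta integral equals the local $L$-factor), the algebraic pairing becomes $\Omega_\infty(\Pi, \nu^{\alg})^{-1}$ times a global zeta integral $Z(w_{\mathrm{f}} \otimes w_\infty, \uPhi; s_1, s_2; \uchi)$. Factoring out the Archimedean piece yields the ratio of completed $L$-values $\Lambda(\Pi \times \rho_1^{-1}, \cdots)\Lambda(\Pi \times \chi_2^{-1}\rho_2^{-1}, \cdots) / \Omega_\infty$ times the finite-adelic renormalised zeta integral. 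Applying Proposition \ref{prop:whittakerformula} (in its twisted form from \S \ref{sect:twistedzeta}) at $p$ shows that the local factor at $p$ of the renormalised zeta integral on depleted test data is precisely $\cE(\pi, q)\cE(\pi \times \chi_2^{-1}, r_2 + 1 + r)$ times a local Gauss-sum factor depending on $(\rho_1, \rho_2)$; packaging these together gives $R_p(\Pi, \rho_1, a_1) R_p(\Pi \times \chi_2^{-1}, \rho_2, a_2)$.

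The main obstacle is bookkeeping rather than substance: keeping straight the identifications between the Whittaker, automorphic and coherent-cohomological models of $\Pif$ in the presence of several finite-order twists $\rho_i$, and verifying that the twisted computations of \S \ref{sect:twistedzeta} assemble uniformly across the interpolation range. A secondary delicate point is the boundedness of the measure, which follows from the fact that the Klingen-ordinary projector preserves an integral structure on coherent cohomology, as established in \cite{LPSZ1}; this part requires no new input here. Finally, the factor $\Omega_p(\Pi, \nu, \nu^{\alg})$ on the left-hand side simply reflects the identity $\nu = \Omega_p(\Pi, \nu, \nu^{\alg}) \cdot \nu^{\alg}$, and the claimed independence of $\nu^{\alg}$ follows from the corresponding cancellation of $\Omega_p$ and $\Omega_\infty$ noted after the Archimedean period construction.
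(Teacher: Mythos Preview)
Your approach is essentially the same as the paper's: both defer to the construction of \cite{LPSZ1} (Proposition~10.4), with the present paper's only refinement being to track the period $\Omega_p(\Pi,\nu,\nu^{\alg})$ explicitly. Your sketch of the underlying machinery (Klingen-ordinary projector, pairing with the Eisenstein family, unwinding via the Whittaker model and Moriyama's Archimedean vector) is a faithful expansion of what is cited.

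Two points where the paper's discussion goes beyond your sketch and which you should mention. First, \cite{LPSZ1} assumed $r_2 \ge 1$; the paper notes that for $r_2 = 0$ the Klingen-ordinary complex is no longer concentrated in a single degree but in degrees $\{0,1\}$, and explains why this is harmless for the highest-degree cohomology used here. Second, the interpolating property in \cite{LPSZ1} was stated only for $\uchi = (\chi,\mathrm{id})$ and under the constraint $a_1 \ge a_2$; the paper observes that the generalisation to arbitrary $\uchi$ is immediate, and removes the ordering constraint by a symmetry argument (swapping $(a_1,\rho_1,\chi_1) \leftrightarrow (a_2,\rho_2,\chi_2)$ and matching at the diagonal $a_1 = a_2$). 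These are not deep, but since the theorem as stated is strictly more general than what is proved in \cite{LPSZ1}, your write-up should address them.

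One minor slip: in your second paragraph you write the local-at-$p$ factor as $\cE(\pi,q)\cE(\pi\times\chi_2^{-1},r_2+1+r)$, but $(q,r)$ parametrise the \emph{geometric} range of the regulator, not the critical interpolation range $(a_1,a_2)$ relevant to this theorem. The correct local factors here are the $R_p(\Pi,\rho_i,a_i)$, which for ramified $\rho_i$ involve Gauss sums rather than the Euler-type products $\cE$.
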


  Here $\Lambda(\Pi \times \rho, s)$ denotes the completed $L$-function (including its Archimedean $\Gamma$-factors). This was proved in \cite{LPSZ1} assuming $r_2 \ge 1$, and taking $\uchi = (\chi, \id)$. See Proposition 10.3 of \emph{op.cit.} for further details, including the definition of the factors $R_p(\Pi, \rho, a)$. The generalisation to arbitrary $\uchi$ is immediate; to extend the results to $r_2 = 0$, we note that the only place that the assumption $r_2 \ge 1$ is needed in \cite{LPSZ1} is in order to show that the Klingen-ordinary cohomology is concentrated in a single degree. If $r_2 = 0$ then this fails, but one still obtains a complex concentrated in degrees $\{0, 1\}$, and since we only care about the highest-degree cohomology this does not affect the proof of our main theorem.

  (Note also that the interpolating property was only proved in \emph{op.cit.}~under the assumption that $a_1 \ge a_2$, but by comparing interpolating properties at points with $a_1 = a_2$, one sees easily that both sides are unchanged if we swap the roles of $(a_1, \rho_1, \chi_1)$ and $(a_2, \rho_2, \chi_2)$, so this condition can be removed.)

 \subsection{Statement of the regulator formula}

  We can now give a precise statement of the theorem we shall prove:
  \bigskip

  \begin{mdframed}
   \begin{theorem}[to be proved]
    \label{thm:mainthm}
    For any $q, r$ with $0 \le q \le r_2$, $0 \le r \le r_1 - r_2$, and $(-1)^{r_2 - q + r} = \chi_2(-1)$, the regulator of \cref{eq:goal0} is given by
    \[
     \operatorname{Reg}^{[\Pi, q, r]}_{\nu, \can}(\uchi) = \frac{(-2)^q (-1)^{r_2 - q+1} (r_2 - q)!}{\cE_p(\Pi, q) \cE_p(\Pi \times \chi_2^{-1}, r_2 + 1 + r)} \cdot \cL_{p, \nu}(\Pi, \uchi; -1-r_2 + q, r),
    \]
    where $\cE_p(\Pi, n) \coloneqq \left(1- \tfrac{p^n}{\alpha}\right) \left(1- \tfrac{p^n}{\beta}\right) \left(1- \tfrac{\gamma}{p^{n+1}}\right) \left(1- \tfrac{\delta}{p^{n+1}}\right)$ (which is nonzero for all $n \in \ZZ$ by Lemma \ref{lem:trivzero}).

    Equivalently, for all test data $(w, \uPhi)$ with $\uPhi$ in the $\uchi^{-1}$-eigenspace, we have
    \begin{equation}
     \label{eq:goal}
     \operatorname{Reg}^{[\Pi, q, r]}_{\nu}(w, \uPhi) = \frac{(-2)^q (-1)^{r_2 - q+1} (r_2 - q)!}{\cE_p(\Pi, q) \cE_p(\Pi, r_2 + 1 + r)} \cdot \cL_{p, \nu}(\Pi, \uchi; -1-r_2 + q, r) \cdot \wZ(w, \uPhi).
    \end{equation}
   \end{theorem}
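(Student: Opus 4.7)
The plan is to reduce \eqref{eq:goal} to a direct computation for a single choice of test data, and then verify it by explicit computation in syntomic and coherent cohomology. By the defining property of $\operatorname{Reg}^{[\Pi,q,r]}_{\nu,\can}(\uchi)$ together with \cref{thm:equivariance}, it suffices to fix one pair $(w, \uPhi)$ with $\wZ(w, \uPhi) \ne 0$ and establish \eqref{eq:goal} for it. The natural choice is: a spherical Whittaker vector and appropriately $\uchi$-twisted spherical Schwartz function away from $p$; and at $p$, the Klingen-parahoric vector $w_p = u_{\Kl}\cdot w^{\Kl\prime}_m$ of \eqref{eq:wklprime} for $m$ sufficiently large, together with $\uPhi_p = \Phi_{\dep}\otimes \Phi_{\crit}$. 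For this choice, \cref{prop:whittakerformula} and the case analysis of \S\ref{sect:whittakerintegral} show that $\wZ_p(w_p, \uPhi_p)$ equals $\cE_p(\Pi,q)\,\cE_p(\Pi\times\chi_2^{-1}, r_2+1+r)$ divided by an explicit volume factor; the zeta integrals at the remaining finite primes and at infinity combine into the expected ratios of $L$-values, all non-vanishing by temperedness.

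On the geometric side, \cref{prop:explicitZ} realises $z^{[\Pi,q,r]}(w, \uPhi)$ as the pairing of $m$ with the pushforward along $\iota : Y_H(V) \to Y_G(U)$ of a product of $\GL_2$ motivic Eisenstein classes. Applying the \'etale--syntomic comparison of Nekov\'a\v{r}--Niziol \cite{nekovarniziol16}, the Bloch--Kato logarithm $\log\bigl(z^{[\Pi,q,r]}(w, \uPhi)\bigr)$ is computed by the syntomic analogue of this pushforward, using the syntomic $\GL_2$ Eisenstein classes of \cite{KLZ20}. Pairing with $\nu_{\dR}$ then reduces to a duality pairing in coherent cohomology of $Y_G(U)$, between the pushforward of a pair of overconvergent Eisenstein series and the Klingen-ordinary eigenclass corresponding to $\nu^{\alg}$. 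The choice of $\Phi_{\crit}$ at $p$ makes the pushforward an overconvergent Klingen-ordinary class varying $p$-adically in the $r$-variable, while $\Phi_{\dep}$ combined with the Coleman primitive built into the syntomic Abel--Jacobi map controls the variation in the $q$-variable (outside the interpolation range of $\cL_{p,\nu}$).

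The Klingen-ordinary projection of the resulting coherent class is exactly the quantity computed in \cite{LPSZ1} to construct $\cL_{p,\nu}(\Pi, \uchi; s_1, s_2)$, specialised at $(s_1, s_2) = (-1-r_2+q, r)$. Matching the normalisations produces the factor $(-2)^q(-1)^{r_2-q+1}(r_2-q)!$, which accounts for the combinatorics of the Bloch--Kato logarithm on $\cV^\vee(-q)$ and the explicit formulas for the syntomic $\GL_2$ Eisenstein classes at weights $(t_1, t_2)$; while the denominator $\cE_p(\Pi,q)\,\cE_p(\Pi\times\chi_2^{-1}, r_2+1+r)$ cancels the local $p$-factor of $\wZ(w, \uPhi)$ appearing on the left, leaving exactly the identity \eqref{eq:goal}.

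The principal obstacle is the middle step: the explicit identification of the syntomic pushforward from $Y_H$ to $Y_G$, with a non-trivial Coleman primitive coming from the $\Phi_{\dep}$ factor, with the coherent cohomology construction underlying $\cL_{p,\nu}$. This requires combining the syntomic realisation of $\GL_2$ Eisenstein classes with the Klingen-ordinary overconvergent projector on $G$, and carefully tracking how the pushforward deforms into an overconvergent family as the Schwartz data at $p$ vary. The Euler factors $\cE_p(\Pi,-)$ arise precisely because our choice of $w$ and $\uPhi$ at $p$ is not the one that diagonalises the $p$-stabilisation used to define $\cL_{p,\nu}$; bookkeeping these correction terms will be the most delicate part of the computation, to be carried out in \cite{LZ20}.
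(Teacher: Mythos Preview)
The paper does not prove \cref{thm:mainthm}; it is explicitly labelled ``to be proved'' and the proof is deferred to \cite{LZ20}. What the paper actually does is reduce \eqref{eq:goal} to the concrete identity \eqref{eq:goal2} (\cref{prop:weprove}) by choosing one specific set of test data at $p$ and making both sides explicit for that choice, ending with the sentence ``This is the formula we shall actually prove.'' Your proposal should therefore be compared against this reduction step, not against a complete proof.

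On that comparison, your outline differs from the paper's in two respects. First, the paper's chosen test data at $p$ is $(u_{\Kl}\cdot w_{p,\Kl},\ \Phi_{\crit}\times\Phi_{\crit})$ at level $\Kl(p)$, not $(u_{\Kl}\cdot w^{\Kl\prime}_m,\ \Phi_{\dep}\times\Phi_{\crit})$ at deep Klingen level. With the paper's choice, the local zeta integral picks up an extra factor $\left[(1-\tfrac{\gamma}{p^{1+q}})(1-\tfrac{\delta}{p^{1+q}})\right]^{-1}$, which is why this factor survives in \eqref{eq:goal2}; with your choice that factor is absent (the torus integral is $1$). The paper's remark in \S\ref{sect:whittakerintegral} explains the division of labour: the $(\Phi_{\crit},\Phi_{\crit})$ data is used here to set up the \'etale class at $\Kl(p)$-level that will later be compared to the coherent side, while $(\Phi_{\dep},\Phi_{\crit})$ is reserved for the syntomic regulator computation in \cite{LZ20}. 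So your choice is the one relevant for the later syntomic step, but it is not the one the paper uses for the reduction carried out here.

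Second, your sketch attempts to go all the way through the syntomic and coherent comparison, which is precisely the content the paper defers. Your description of that step is broadly in the right spirit, and you correctly flag the syntomic--coherent identification as the main obstacle. But a couple of points are loose: there are no Archimedean zeta integrals in $\wZ$ (it is a product over finite places only, with unramified factors equal to $1$), so the sentence about ``the remaining finite primes and at infinity combine into the expected ratios of $L$-values'' is not quite right; and your account of the roles of $\Phi_{\dep}$ and $\Phi_{\crit}$ in producing the $q$- versus $r$-variation is heuristic rather than a reduction to anything in the present paper. These are not errors so much as indications that the substantive argument lies outside what the paper provides.
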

  \end{mdframed}

  \begin{note} \
   \begin{enumerate}[(a)]
    \item The factor $\cE_p(\Pi, n)$ agrees (up to a sign) with $R_p(\Pi, \id, -1-r_2 + n)$; that is, the Euler factors relating $\cL_{p, \nu}$ to the regulator $\mathrm{Reg}_{\nu, \can}^{[\Pi, q, r]}$ in the ``geometric'' range are formally the same as those relating it to complex $L$-values in the ``critical'' range.

    \item If $r_1 - r_2 > 0$, or if Hypothesis 10.5 of \cite{LPSZ1} holds, then $\cL_{p, \nu}(\Pi, \bfj_1, \bfj_2)$ factors as a product of a function of $j_1$ and a function of $j_2$. However, our proof of the theorem will not directly ``see'' this finer decomposition.\qedhere
   \end{enumerate}
  \end{note}

 \subsection{Test data at $p$}\label{ssec:testdataatp} We now choose specific test data at the prime $p$ which will enable us to evaluate both sides of \eqref{eq:goal}.

  \begin{definition} \
   \begin{itemize}
    \item As above, let $u_{\Kl}\in G(\Zp)$ be any matrix whose first column is $(1,1,0,0)^T$.
    \item Let $\Kl(p)$ denote the Klingen parahoric subgroup (as in ``Conventions'' above).
    \item Let $U_{2, \Kl}'$ denote the operator $p^{-r_2} \left[ \Kl(p) \diag(1, p, p, p^2) \Kl(p) \right] \in \QQ[ G(\Qp)/\!/ \Kl(p)]$.
    \item Let $w_{p, \Kl} \in \cW(\Pi'_p)^{\Kl(p)}$ denote the normalised $U_{2, \Kl}'$-eigenvector of eigenvalue $\tfrac{\alpha\beta}{p^{r_2 + 1}}$ defined in \eqref{eq:wklprime} above.
    \item Let $\Phi_{\crit}$ be the Schwartz function on $\Qp^2$ defined in \cref{sect:whittakerintegral}, and $\uPhi_{p, \Kl} = \Phi_{\crit} \times \Phi_{\crit}$.
   \end{itemize}
   We refer to the pair $(u_{\Kl} \cdot w_{p, \Kl}, \uPhi_{p, \Kl})$ as \emph{Klingen test data}.
  \end{definition}

  We already evaluated $\wZ_p(u_{\Kl} \cdot w_{p, \Kl}, \uPhi_{p, \Kl})$ in \cref{sect:whittakerintegral} above; the result is
  \[
   \wZ_p(u_{\Kl} \cdot w_{p, \Kl}, \uPhi_{p, \Kl}) = \frac{p^3}{(p+1)^2(p-1)} \cdot \frac{\cE(\Pi, q) \cE(\Pi, r_2 + 1 + r)}{\left(1 - \frac{\gamma}{p^{1+q}}\right)\left(1 - \frac{\delta}{p^{1+q}}\right)}.
  \]
  In particular, it is nonzero; so it suffices to prove that \eqref{eq:goal} holds for test data $(w, \uPhi)$ given by the product of these Klingen test data at $p$, and some arbitrary test data $(w^p, \uPhi^p)$ away from $p$. We shall now make explicit the two sides of  \eqref{eq:goal} in this setting.

  \subsubsection{Etale side}

  The left-hand side of \cref{eq:goal} for data of this form can be written explicitly using \cref{prop:explicitZ}. Let us choose an open compact $U^p$ such that $U^p$ fixes $w^p$ and $U^p \cap H(\Af^p) = V^p$ fixes $\uPhi^p$.

  \begin{notation}
    Write $Y_{G, \Kl,\QQ}$ for the $G$-Shimura variety of level $U^p \Kl(p)$, and $Y_{H, \Delta,\QQ}$ for the $H$-Shimura variety of level $V^p K_{p, \Delta}$, where
  \[
   K_{p,\Delta}=\left\{ h\in H(\Zp):\, h=\left(\begin{pmatrix} x & \star\\  & \star\end{pmatrix},\begin{pmatrix} x & \star\\  & \star\end{pmatrix}\right)\pmod{p}\quad\text{for some $x$}\right\}.
  \]
  Write $X_{G,\Kl,\QQ}$ and $X_{H,\Delta,\QQ}$ for toroidal compactifications, where the rational polyhedral cone decompositions are chosen as in \cite[\S 2.2.4]{LPSZ1}.
  \end{notation}

  We have $u_{\Kl}^{-1} K_{p, \Delta} u_{\Kl} \subset \Kl(p)$, so as in \cite[\S 4.1]{LPSZ1}, $u_{\Kl}$ gives a finite morphism of Shimura varieties
  \begin{equation}
   \label{eq:iotaDelta}
   \iota_{\Delta}: Y_{H, \Delta,\QQ} \to Y_{G, \Kl,\QQ}.
  \end{equation}
  Hence there is a pushforward map $\iota_{\Delta, \star}^{[t_1, t_2]}$ on \'etale cohomology, and we obtain a class
  \[
   \left(\log \circ \AJ^{[\Pi, q]} \circ \iota^{[t_1,t_2]}_{\Delta, \star}\right)(\Eis^{[t_1, t_2]}_{\et, \Phi}) \in H^3_{\dR}( Y_{G, \Kl, \Qp}, \cV^\vee) \otimes L / \Fil^{-q} .
  \]
  On the other hand, the element $\nu$ of \cref{def:nudR} restricts to a homomorphism
  \[ \cW(\Pif')^{\Kl(p)}_L \to \Gr^{r_2 + 1} H^3_{\dR, c}( Y_{G, \Kl, L}, \cV), \]
  so we have a class $\eta = \nu(w)$ in the target group, and a canonical lifting $\eta_{\dR} = \nu_{\dR}(w)$ of $\eta$ to $\Fil^{1+r_2} H^3_{\dR, c}( Y_{G, \Kl, L}, \cV) = \Fil^{1+q} H^3_{\dR, c}( Y_{G, \Kl, \Qp}, \cV) \otimes L$. From the defining property of the regulator map $\operatorname{Reg}_{\nu}^{[\Pi, q, r]}$, we have
  \[
   \operatorname{Reg}_{\nu}^{[\Pi, q, r]}(w, \uPhi) = \vol(V) \cdot \left\langle \left(\log \circ \AJ^{[\Pi, q]} \mathop{\circ} \iota^{[t_1,t_2]}_{\Delta, \star}\right)(\Eis^{[t_1, t_2]}_{\et, \Phi}), \eta_{\dR}\right\rangle_{\dR, Y_{G, \Kl, \Qp}}.
  \]

  \subsubsection{Coherent side} We now derive a corresponding formula for the right-hand side of \eqref{eq:goal}. The toroidal compactification of the Shimura variety $X_{G,\Kl,\QQ}$ (for a suitable choice of boundary data) has a canonical $\Zp$-model $X_{G,\Kl}$, and we let $\mathfrak{X}_{G,\Kl}$ denote its $p$-adic completion, as a formal scheme over $\Zp$.

  Given $\uPhi^p$, the construction of \cite{LPSZ1} \S 7.4 gives a 2-parameter $p$-adic family of Eisenstein series on $H$, which we denote simply by $\cE(\uPhi^p)$. Then the $p$-adic interpolation theory of \emph{op.cit.} allows us to make sense of $\iota_{\Delta, \star}\left(\cE(\uPhi^p)\right)$ as a class in $H^1$ of the multiplicative locus\footnote{This was denoted $X_{G, \Kl}^{\ge 1}$ in \emph{op.cit.}, but this notation is somewhat misleading since this space is only one component of the $p$-rank $\ge 1$ locus at Klingen level, so we shall use the above notations here (matching the notations used in \cite{LZ26}).} $\mathfrak{X}_{G, \Kl}^m \subset \mathfrak{X}_{G, \Kl}$. This class takes values in a sheaf of $\Lambda_L(\Zp^\times \times \Zp^\times)$-modules, and hence allows us to define a measure
  \[
   \left\langle \iota_{\Delta, \star}\left(\cE(\uPhi^p)\right), \eta \right\rangle_{\mathfrak{X}_{G, \Kl}^{m}} \in \Lambda_L(\Zp^\times \times \Zp^\times).
  \]
  (Here $\Lambda_L(-)$ denotes the Iwasawa algebra with $L$-coefficients.) This cup product depends on the choice of the prime-to-$p$ level group $U^p$, but this can be eliminated by renormalising by $\vol V$. Unwinding the definition of the $p$-adic $L$-function given in \cite{LPSZ1}, we reach the following explicit formula:

  \begin{proposition}
   \label{prop:klingendata}
   For $(q, r)$ as above, the value of the measure
   \[\vol(V)  \Big\langle \iota_{\Delta, \star}\left(\cE(\uPhi^p)\right),\eta \Big\rangle_{\mathfrak{X}_{G, \Kl}^{m}}\]
   at $(-1-r_2 + q, r)$ is $\tfrac{p^3}{(p+1)^2(p-1)} \wZ^p(w^p, \uPhi^p)\cL_{p, \nu}(\Pi, \uchi; -1-r_2 + q, r)$.\qed
  \end{proposition}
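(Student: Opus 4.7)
The plan is to combine a uniqueness argument, analogous to \cref{thm:equivariance} but for the prime-to-$p$ ad\`eles, with the interpolation property of $\cL_{p,\nu}$ from \cref{thm:padicLfcn}. The cup product on the left-hand side, evaluated at the fixed classical point $(-1-r_2+q, r)$ and with the $p$-component of the test data fixed to be the Klingen data $(u_{\Kl}\cdot w_{p,\Kl}, \uPhi_{p,\Kl})$, is by construction a bilinear functional in the prime-to-$p$ data $(w^p, \uPhi^p)$ that satisfies the same $H(\Af^p)$-equivariance as the semilocal zeta integral $\wZ^p$. By the adelic uniqueness statement of \cref{sect:zeta-appendix}, it must therefore equal $\wZ^p(w^p, \uPhi^p)$ times a scalar depending only on $(q, r)$ and the chosen test data at $p$. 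The remaining work is to pin down this scalar.

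First, I would specialise the $p$-adic Eisenstein family $\cE(\uPhi^p)$ at the classical point, where it reduces, by \cite[\S 7]{LPSZ1}, to an honest nearly-holomorphic Eisenstein section on $H$ of weight determined by $(t_1, t_2, r)$; correspondingly, the rigid cohomology pairing on $\mathfrak{X}_{G,\Kl}^m$ reduces to the algebraic coherent cohomology pairing on $X_{G,\Kl,\Qp}$. Fixing an embedding $L\hookrightarrow\CC$ and using the periods $\Omega_p(\Pi, \nu, \nu^{\alg})$ and $\Omega_\infty(\Pi, \nu^{\alg})$ of \cref{sect:periods} to identify $\eta$ with an element of $\Pi'$, this algebraic pairing becomes the Petersson-type inner product of a cusp form against the pushforward of the classical Eisenstein series. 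I would then invoke the Novodvorsky/Piatetski-Shapiro unfolding carried out in \cite[\S 8]{LPSZ1} to rewrite this as the global zeta integral $Z(w, \uPhi, s_1, s_2)$ at $(s_1, s_2) = (-t_1/2, -t_2/2)$, and factor it as a product of local integrals $Z_\infty\cdot Z^p(w^p, \uPhi^p)\cdot Z_p(u_{\Kl}w_{p,\Kl}, \uPhi_{p,\Kl})$ divided by the appropriate $L$-factors in order to pass to $\wZ$.

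The Archimedean factor, computed using Moriyama's explicit formulas for the Whittaker vector $w_\infty$, combines with $\Omega_\infty$ to produce precisely the completed complex $L$-values $\Lambda(\Pi,\dots)\,\Lambda(\Pi\times\chi_2^{-1},\dots)$ that appear in the interpolation formula of \cref{thm:padicLfcn}; the prime-to-$p$ contribution is $\wZ^p(w^p, \uPhi^p)$ by definition. The $p$-adic factor was computed in \cref{sect:whittakerintegral} as
\[ \wZ_p(u_{\Kl}w_{p,\Kl}, \uPhi_{p,\Kl}) = \tfrac{p^3}{(p+1)^2(p-1)} \cdot \tfrac{\cE(\Pi, q)\,\cE(\Pi\times\chi_2^{-1}, r_2+1+r)}{(1-\gamma/p^{1+q})(1-\delta/p^{1+q})}, \]
and the Euler factors here cancel precisely against $R_p(\Pi,\id,-1-r_2+q)\cdot R_p(\Pi\times\chi_2^{-1},\id,r)$ in \cref{thm:padicLfcn}, leaving the residual constant $\tfrac{p^3}{(p+1)^2(p-1)}$. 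The hard part is not conceptual but purely one of bookkeeping: tracking the normalisations of the $p$-adic Eisenstein family, the period isomorphisms $\Omega_p$ and $\Omega_\infty$, the Archimedean Whittaker normalisation, and the $p$-stabilisation implicit in passing from spherical to Klingen level. Most of these checks are already present in \cite{LPSZ1}, so the proof reduces to assembling the ingredients of that paper with the new local zeta computations of \cref{sect:whittakerintegral}.
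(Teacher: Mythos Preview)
There is a genuine gap in your approach. The point $(-1-r_2+q, r)$ lies \emph{outside} the interpolation range of $\cL_{p,\nu}$: the formula of \cref{thm:padicLfcn} applies only for $(a_1, a_2)$ with $0 \le a_1, a_2 \le r_1 - r_2$, whereas here the first coordinate satisfies $-1-r_2+q \le -1 < 0$. Hence there is no expression for $\cL_{p,\nu}(\Pi, \uchi; -1-r_2+q, r)$ in terms of completed complex $L$-values and $R_p$-factors, and the chain of identifications in your second paragraph---specialising the coherent pairing to a Petersson product, unfolding to a global zeta integral, and matching the Archimedean factor against $\Lambda(\Pi, \dots)$---cannot be carried out at this point. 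Indeed, the entire purpose of the reciprocity law \cref{thm:mainthm}, for which this proposition is a preliminary reduction, is precisely that the value of $\cL_{p,\nu}$ here is governed by a $p$-adic regulator rather than by any complex $L$-value; your argument, if it worked, would render \cref{thm:mainthm} trivial.

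The paper's own argument is far shorter and avoids this issue entirely: the proposition is not a consequence of the interpolation property but of the \emph{construction} of $\cL_{p,\nu}$ in \cite{LPSZ1}. There the $p$-adic $L$-function is built as exactly this kind of cup-product measure on $\mathfrak{X}^m_{G,\Kl}$, and the asserted identity holds as an equality of elements of $\Lambda_L(\Zp^\times \times \Zp^\times)$, hence in particular at the stated point, simply by comparing the definition of $\cL_{p,\nu}$ with the left-hand side. The factor $\wZ^p(w^p,\uPhi^p)$ appears because \cite{LPSZ1} normalises by the prime-to-$p$ zeta integral, and the constant $\tfrac{p^3}{(p+1)^2(p-1)}$ is the level-at-$p$ volume contribution. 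This is why the paper offers no proof beyond ``unwinding the definition''. If you wish to salvage your unfolding-and-interpolation strategy, you would have to run it at genuine interpolation points $(a_1,a_2)$ with $0\le a_i\le r_1-r_2$, deduce an identity of measures by Zariski density, and only then specialise at $(-1-r_2+q,r)$; but this is a substantial detour compared to simply reading off the construction.
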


  Summarising the above discussion, we have the following:

  \begin{proposition}
   The formula of \eqref{eq:goal} is equivalent to the following assertion:

   For all prime-to-$p$ levels $U^p$, all $\uPhi^p$ stable under $U^p \cap H$, and all $\eta \in H^2(\Pif)^{U^p \Kl(p)}[U_{2, \Kl}' = \frac{\alpha\beta}{p^{r_2 + 1}}]$, we have
   \begin{multline}
    \label{eq:goal2}
    \left\langle \left(\log \circ \AJ^{[\Pi, q]} \mathop{\circ} \iota^{[t_1,t_2]}_{\Delta, \star}\right)(\Eis^{[t_1, t_2]}_{\et, \uPhi^p \uPhi_{\Kl}}),\eta_{\dR}\right\rangle_{\dR, Y_{G, \Kl, \Qp}}
    \\ = \frac{(-2)^q (-1)^{r_2 - q+1}(r_2 - q)!}
    {\left(1 - \frac{\gamma}{p^{1+q}}\right)\left(1 - \frac{\delta}{p^{1+q}}\right)}
    \cdot \Big\langle \iota_{\Delta, \star}\left(\cE(\uPhi^p)|_{(-1-r_2 + q, r)}\right), \eta \Big\rangle_{\mathfrak{X}_{G, \Kl}^{m}}.
   \end{multline}
  \end{proposition}

  This is the formula that is actually proved in \cite{LZ26}.
%%%%%%%%%%%%%%%%%%%%%%%%%%%%%%%%%%%%%%%%%%%

 %\bibliographystyle{../../amsalphaurl}
 %\bibliography{../../references}

\providecommand{\bysame}{\leavevmode\hbox to3em{\hrulefill}\thinspace}
\providecommand{\MR}[1]{%
 MR \href{http://www.ams.org/mathscinet-getitem?mr=#1}{#1}.
}
\providecommand{\href}[2]{#2}
\newcommand{\articlehref}[2]{\href{#1}{#2}}

\end{document}